\newtheorem{thm}{Theorem}[section]
\newtheorem{exa}[thm]{Example}
\newtheorem{lem}[thm]{Lemma}
\newtheorem{cor}[thm]{Corollary}
\newtheorem{conj}[thm]{Conjecture}
\theoremstyle{definition}
\newcommand{\Z}{\mathbb{Z}}
\newcommand{\supp}{\mathop{\rm supp}\nolimits}
\newcommand{\apph}[3]{\mathrel{\operatorname*{{\uplus}^{#1}}_{#2}^{#3}}}
\begin{document}

\title{Grid-Based Graphs, Linear Realizations and the Buratti-Horak-Rosa Conjecture}

\author[1]{Onur A\u{g}{\i}rseven}  
\author[2]{M.~A.~Ollis\footnote{Corresponding author.  Email: \texttt{matt\_ollis@emerson.edu}} } 

\affil[1]{Unaffiliated}
\affil[2]{Marlboro Institute for Liberal Arts and Interdisciplinary Studies, Emerson College, Boston, Massachusetts 02116, USA}


\maketitle

\begin{abstract}
Label the vertices of the complete graph~$K_v$ with the integers~$\{0, 1, \ldots, v-1\}$ and define the {\em length}~$\ell$ of the edge between distinct vertices labeled~$x$ and~$y$ by~$\ell(x,y) = \min( |y-x|, v - |y-x| )$.  A {\em realization} of a multiset~$L$ of size~$v-1$ is a Hamiltonian path through~$K_v$ whose edge labels are~$L$.    The {\em Buratti-Horak-Rosa (BHR) Conjecture} is that there is a realization for a multiset~$L$ if and only if for any divisor~$d$ of~$v$ the number of multiples of~$d$ in~$L$ is at most~$v-d$.  

We introduce ``grid-based graphs" as a useful tool for constructing particular types of realizations, called ``linear realizations," especially when the multiset in question has a support of size~3.  This lets us prove many new instances of the BHR Conjecture, including those for multisets of the form~$\{1^a, x^b, y^c \}$ when~$a \geq x+y - \epsilon$, where~$\epsilon$ is the number of even elements in~$\{ x,y \}$, and those for all multisets of the following forms for sufficiently large~$v$ with $\gcd(v,y) = 1$ for all~$y \in L$:
\begin{itemize}
\item  $\{1^a, 2^b, x^c\}$, except possibly when~$a \in \{1,2\}$ and~$x$ is odd,
\item  $\{1^a, x^b, (x+1)^c\}$.
\end{itemize}
This establishes that there are infinitely many sets~$U$ of size~3 for which there are infinitely many values of~$v$ where the BHR Conjecture holds for each multiset with support~$U$.  We also show that the BHR~Conjecture holds for~$\{1^a,x^b,(x+1)^c\}$ when $x \in \{7,9,10\}$ and~$\gcd(v,x) = \gcd(v,x+1) = 1$.

\bigskip

\noindent
MSC2020: 05C38, 05C78 \\
Keywords: complete graph, Hamiltonian path, edge-length, realization, grid-based graph.
\end{abstract}

\section{Introduction}

Consider the complete graph~$K_v$ on~$v$ vertices with labels~$\{0,1,\ldots, v-1 \}$.  Define an induced edge-labeling~$\ell$, called the~{\em (cyclic) length}, by setting
$$\ell(x,y) = \min( |y-x|, v - |y-x| )$$
for distinct vertices~$x$ and~$y$.  Call an edge with length~$\ell$ an {\em $\ell$-edge}.
The length is an integer in the range~$0 < \ell(x,y) \leq \lfloor v/2 \rfloor$.  If we place the vertices of~$K_v$ at evenly spaced points on a circle in the natural order then the length gives the shortest number of steps around the circle from~$x$ to~$y$.  In other words, $\ell(x,y) = i$, for $i$ in the range~$0 < i \leq  \lfloor v/2 \rfloor$, if and only if $y-x \equiv \pm i \pmod{v}$.

Let~$\Gamma$ be a Hamiltonian path of~$K_v$.  If~$L$ is the multiset of lengths of the edges of $\Gamma$ then $\Gamma$ {\em (cyclically) realizes}~$L$ and~$\Gamma$  is a {\em (cyclic) realization} of~$L$.   For example, if~$v = 11$ then the Hamiltonian path
$[ 0, 8,7,2,4,10,9,1,3,5,6 ]$
in~$K_v$  realizes the multiset~$\{ 1^3, 2^3,  3^2,  5^2 \}$.    (We use exponents in multisets to indicate multiplicity.  For a multiset~$L$, the set~$\supp(L) = \{ x : x \in L \}$ is the {\em support} of~$L$.)  

The Buratti-Horak-Rosa (BHR) Conjecture---posed by Buratti for prime~$v$~\cite{West07}, generalized to arbitrary~$v$ by Horak and Rosa~\cite{HR09} and given the following succinct formulation by Pasotti and Pellegrini~\cite{PP14b}---gives a condition that determines exactly when a multiset has a realization.  

\begin{conj}{\rm \cite{HR09,PP14b,West07} (BHR Conjecture)} 
Let~$L$ be a multiset of size~$v-1$ with $\supp(L) \subseteq \{ 1, \ldots, \lfloor v/2 \rfloor \}$.  There is a realization for~$L$ if and only if for any divisor~$d$ of~$v$ the number of multiples of~$d$ in~$L$ is at most~$v-d$.  
\end{conj}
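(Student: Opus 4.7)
The conjecture is a biconditional, so I would split it into the straightforward ``only if'' direction---which I would handle with a short counting argument---and the ``if'' direction, which is famously open in general and for which I would describe a constructive strategy in line with the linear realizations and grid-based graphs promised in the abstract.

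For necessity, my plan is the following. Fix any divisor $d$ of $v$. If $\ell(x,y) = i$ and $d \mid i$, then since $y - x \equiv \pm i \pmod v$ and $d$ divides both $i$ and $v$, we have $y \equiv x \pmod d$. Hence every edge whose length is a multiple of $d$ joins two vertices that agree modulo $d$. Let $\Gamma = [v_1, v_2, \ldots, v_v]$ be any Hamiltonian path realizing $L$, and read off the induced sequence of residues modulo $d$. Each of the $d$ residue classes contains exactly $v/d$ vertices and hence appears exactly $v/d$ times in the sequence; in particular every residue appears, which forces at least $d-1$ indices $i$ with $v_i \not\equiv v_{i+1} \pmod d$. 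The remaining at most $(v-1)-(d-1) = v-d$ consecutive pairs are the only candidates for edges of length divisible by $d$, so at most $v-d$ entries of $L$ can be multiples of $d$, as required.

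The sufficiency direction is the conjecture proper, and no general proof is known. My plan would therefore be constructive: exhibit, for each $L$ satisfying the divisor condition, an explicit Hamiltonian path realizing it. I would first dispose of $|\supp(L)| = 1$, which amounts to Hamiltonicity of a single cyclic power, and then $|\supp(L)| = 2$, which is classical. For $|\supp(L)| = 3$ I would adopt exactly the strategy the paper announces: introduce a combinatorial auxiliary object whose walks correspond to path fragments with controlled length multisets, and then show that whenever the divisor condition holds one can choose a walk whose length contributions match~$L$ and whose endpoints allow concatenation into a genuine Hamiltonian path of $K_v$. The auxiliary object turns what is really a simultaneous Diophantine-and-Hamiltonian problem into a more tractable routing problem.

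The main obstacle is the tension between the global character of the divisor conditions and the local character of any splicing construction. Individual fragments can be produced with any desired length multiset within a narrow range, but the inequalities ``at most $v-d$ multiples of $d$'' must hold for every divisor $d$ of $v$ simultaneously, and on the boundary of the resulting feasible region the constructions become tight and delicate. Realistically I would not expect a complete proof; instead, as the paper does, I would aim to cover infinite families of supports (for instance $\{1, x, y\}$ when the multiplicity of~$1$ dominates, and $\{1, x, x+1\}$ for $v$ large) together with a few fixed small supports, and mop up the boundary by a finite check.
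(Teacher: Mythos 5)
This statement is a conjecture, not a theorem: the paper offers no proof of it, only noting that the divisor condition is necessary (citing~\cite{HR09}), so there is no ``paper proof'' to diverge from. Your counting argument for necessity is correct and is essentially the standard residue-class argument from~\cite{HR09} (the cyclic analogue of the paper's Lemma~\ref{lem:fauxsets}), and you rightly treat the sufficiency direction as open and describe only a partial constructive program, which is exactly the stance the paper itself takes.
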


The condition in the BHR Conjecture is necessary~\cite{HR09} and is vacuous when~$v$ is prime.  We say that the BHR~Conjecture holds for a particular support if it holds for all multisets with that support.

The BHR Conjecture is known to hold for multisets with support of size at most~2~\cite{DJ09,HR09}.
In this paper we construct realizations for multisets with small support, primarily those with support of size at most~3.   These results constitute a significant expansion of the list instances where the BHR Conjecture is known to hold.  In particular they imply that there are infinitely many sets~$U$ of size~3 for which there are infinitely many values of~$v$ where the Buratti-Horak-Rosa Conjecture holds for multisets with support~$U$; a step up in generality from previous results.

Theorem~\ref{th:known} collects the known results for multisets with support of size at most~3. 

\begin{thm}\label{th:known}{\rm (Known results)} 
Let $L$ be a multiset of size~$v-1$ having underlying set~$U$ with $|U| \leq 3$.  In each of the following cases, if $L$ is admissible, then it is realizable.
\begin{enumerate}
\item $|U| \leq 2$ \cite{DJ09,HR09},
\item 
  $\max(U) \leq 7$ or $U = \{ 1,2,8 \}, \{1,2,10\}, \{1,2,12 \}$  \cite{CD10,CO,PP14, PP14b}, 

\item $L = \{ 1^a, 4^4, 8^c\}$ when $a \geq 3$ \cite{Avila22}, 
\item $L = \{1^a,2^b, x^c  \}$ when $x$ is even and $a+b \geq x-1$ \cite{PP14},
\item $L = \{1^a, 2^{x-1}, x \}$ \cite{Avila22},
\item $L = \{1^a, 3^b, x^c \}$ when~$x$ is even, $c$ is odd, and either~$a \geq x+1$ or $a=x$ and $3 \nmid b$~\cite{Avila23}.
\item $L = \{1^a, x^b, (x+1)^c \}$ when $x$ is odd and either $a \geq \min( 3x-3, b+2x-3)$ or $a \geq 2x-2$ and  $c \geq 4b/3$ \cite{OPPS},
\item $L = \{1^a, x^b, (x+1)^c \}$ when $x$ is even and either $a \geq \min( 3x-1, c+2x-1)$ or $a \geq 2x-1$ and  $b \geq c$ \cite{OPPS},
\item $L = \{ 1^a, x^b, (2x)^c \}$ when $a \geq x-2$, $c$ is even and $b \geq 5x-2+c/2$ \cite{OPPS2},
\item $L = \{ 1^a, x^b, y^c \}$ when $x$ is even,  $x<y$ and either $y$ is even and $a \geq y-1$ or $y$ is odd and $a \geq 3y-4$  \cite{OPPS},
\item $L = \{ 1^a, x^b , y^c \}$ where $x < y$ and $a \geq x + 4y - 5$ \cite{HR09,OPPS2},
\item $v \leq 37$ \cite{MP,Meszka}.
\end{enumerate}
\end{thm}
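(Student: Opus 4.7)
The statement is a survey/compilation theorem in which every clause is attributed to one or more previously published references, so the ``proof'' is not a single coherent argument but rather a careful cross-reference with the literature. My plan would therefore be to work through the twelve items in turn and, for each, locate the precise statement in the cited source, translate it into the notation of the present paper (cyclic length, the admissibility condition phrased via divisors, and the ``support of size at most~3'' framing), and verify that the hypotheses assumed here are no weaker than what the original paper actually establishes.

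For the structural items at the top and bottom of the list the verification is essentially bookkeeping. Case~(1) with $|U|\le 2$ is the Skolem/Langford-style result of Horak--Rosa \cite{HR09} (and implicit in \cite{DJ09}); case~(2) for $\max(U)\le 7$ and the three sporadic $\{1,2,x\}$ cases is the accumulated body of small-support work of Capparelli--Del Fra, Pasotti--Pellegrini, and co-authors; and case~(12) is Meszka's exhaustive computer enumeration reported in \cite{MP,Meszka}. For each I would simply confirm the range of $v$, the admissibility handling, and the absence of unstated exceptions.

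The substantive items~(3)--(11) require checking that each hypothesis I state matches the guarantee actually proved. These results are obtained by rather different constructive techniques, so my plan is to read each in its own terms: the Avila constructions in~\cite{Avila22,Avila23} build realizations recursively from short prefabricated blocks; the Pasotti--Pellegrini result of case~(4) uses a direct linear-realization concatenation; cases~(7),(8),(10),(11) from~\cite{OPPS,OPPS2} chain together ``linear realizations'' of sub-multisets (the same philosophy this paper will exploit); and case~(9) from~\cite{OPPS2} combines a length-doubling construction with a tail of~$1$s. For each clause I would extract the exact inequality bounding $a$ (or $b$, $c$) and the parity/divisibility side-conditions, and then state them in the uniform format used here.

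The only genuine obstacle is consistency: the original papers use slightly different conventions (some state results in terms of linear rather than cyclic realizations, some require $\gcd(v,x)=1$ implicitly via admissibility, some index the ``long'' and ``short'' edges opposite to what I do). The hard part of writing this theorem is therefore not mathematical but editorial---making sure that when I quote, say, the Avila bound $a\ge x+1$ for $\{1^a,3^b,x^c\}$, I have not silently strengthened or weakened it, and that the union of the twelve items as stated covers exactly the set of triples known to be realizable in the literature, with no double counting and no inadvertent omission of a published case.
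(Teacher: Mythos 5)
The paper gives no proof of this theorem at all: it is a compilation of known results whose justification consists entirely of the citations embedded in each item, which is exactly the approach you describe. Your proposal is correct and matches the paper's (implicit) treatment.
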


That most of these results have~1 as an element of the multiset may seem limiting.  However, if we consider the vertex labels to be elements of the cyclic group~$\Z_v$ then we can use automorphisms to turn the problem of realizing one multiset into another.  In particular, given~$x$ with $1 \leq x < v$, let the {\em reduced form of~$x$ with respect to~$v$} be~$\widehat{x} = \min(x, v-x)$.  Let~$s$ be coprime to~$v$.  Then multiplication by~$s$ in~$\Z_v$ is an automorphism and a realization of a multiset~$\{1^a, x^b, y^c \}$ is equivalent to one for~$\{\widehat{s}^a, \widehat{sx}^b, \widehat{sy}^c \}$.  (A similar argument applies to multisets with larger support.)  It follows that if one of the elements of a multiset is coprime to~$v$ (which is guaranteed to be the case when~$v$ is prime) then the question of whether that multiset is realizable is equivalent to a question for a particular multiset that has~1 as an element~\cite{HR09}.  

Almost all of the new results of this paper are for multisets that have~1 as an element.  In Section~\ref{sec:bhr} we show how the new and existing constructions for realizations give strong results for the BHR Conjecture.

Suppose we label the edge between each pair of vertices~$x$ and~$y$ of~$K_v$ with $|y-x|$ instead of $\ell(x,y)$.   This corresponds to the distance between vertices when placed at evenly spaced points on a line in consecutive order; call it the {\em linear length}.  A Hamiltonian path~$\Gamma$ again gives rise to a multiset~$L$ of the labels of its edges; in this case~$\Gamma$ is a {\em linear realization} of~$L$.   Typically, a Hamiltonian path will linearly realize a different multiset to the one that it cyclically realizes.  For example, the path $[0,8,7,2,4,10,9,1,3,5,6]$ in~$K_{11}$ that we saw cyclically realizes~$\{1^3, 2^3, 3^2, 5^2\}$ is a linear realization of~$\{1^3, 2^3, 5, 6,  8^2\}$.

If the largest linear length in a linear realization is at most $\lfloor v/2 \rfloor$ then a  linear realization is a cyclic realization of the same multiset.  As we are interested almost entirely in this situation, we refer to linear lengths simply as ``lengths" and disambiguate only when necessary.  Items~3--8 and~10--11 of Theorem~\ref{th:known} were proved by constructing linear realizations that are also cyclic realizations for the same multiset.

In this paper we develop the theory of linear realizations.  This builds on and unifies some of the existing ideas in the literature for constructing linear realizations.  Theorem~\ref{th:main} summarizes the main new results.  In many instances the constructions used give improvements for a variety of subcases of these results and related constructions prove less strong, but still new, results for similar multisets. 

\begin{thm}\label{th:main}{\rm (Main Results)}
Let $L = \{1^a, x^b, y^c \}$ with $1< x<y$ and $a+b \geq y-1$.  Then~$L$ has a linear realization in the following cases:
\begin{enumerate}
\item $x$ even and $a \geq x+y-2$,
\item $x$ odd and $a \geq x+y -1$ if~$y$ is even or   $a \geq x+y $ if~$y$ is odd,
\item $y = x+1$ and $a \geq x+1$ 
\item $x$ even, $y \geq 2x+1$, $a \geq 3x-2$ and $a+b \geq x+y-1$,
\item $x=2$, $y\geq5$ and $a \geq 3$. 
\end{enumerate}
\end{thm}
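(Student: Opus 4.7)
The plan is to construct each of the five linear realizations directly using the grid-based graph framework developed earlier in the paper. The unifying idea is to embed the vertex set $\{0, 1, \ldots, v-1\}$, with $v = a+b+c+1$, into a two-dimensional grid so that one family of moves corresponds to the length-$1$ edges and the others to length-$x$ and length-$y$ edges; a well-chosen Hamiltonian path in the grid-based graph then yields the desired realization. As long as every length appearing stays at most $\lfloor v/2 \rfloor$, which will hold under the hypotheses, the linear realization simultaneously serves as a cyclic realization, which is how these results will feed into the BHR applications in Section~\ref{sec:bhr}.

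Case~3 ($y = x+1$) I would handle first, since it has the cleanest grid: write the vertices in consecutive rows of length $x$, so that horizontal moves give length $1$, vertical moves give length $x$, and diagonal moves give length $x+1$. The hypothesis $a \geq x+1$ supplies the slack needed to stitch horizontal segments together along the top and bottom rows while absorbing the $b$ vertical and $c$ diagonal moves inside. Cases~1 and~2 I would treat together: the respective thresholds $a \geq x+y-2$, $a \geq x+y-1$, or $a \geq x+y$ reflect a template construction built from length-$x$ and length-$y$ moves and then extended by blocks of consecutive length-$1$ edges; the step-up by one between the odd-even and odd-odd subcases of case~2 reads as a parity obstruction, with exactly one additional length-$1$ edge needed to correct a parity mismatch when both $x$ and $y$ are odd. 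Case~4 exploits the fact that $y \geq 2x+1$ allows a length-$y$ edge to jump over an entire column of $x$-spaced vertices, so a controlled number of length-$y$ "teleports" can be inserted into an otherwise $\{1,x\}$-based linear realization, with $a \geq 3x-2$ and $a+b \geq x+y-1$ supplying flexibility at the insertion points. Case~5 ($x=2$, $a \geq 3$) is easiest, since the natural grid has only two rows, and can likely be handled either as a short direct construction or as a degenerate specialization of case~1 or case~4.

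The main obstacle, I expect, is not inventing the grid templates themselves but the bookkeeping around them: one must track the parities of $b$ and $c$, the relationship of $a+b+c+1$ to the chosen grid dimensions, and boundary effects where the rectangular grid is not completely filled. For this reason I would organize each case by first constructing one or two base linear realizations that achieve the smallest permitted $b$ and $c$, then applying iterated augmenting operations that each append a fixed-shape sub-path introducing a controlled number of length-$x$ and length-$y$ edges without disturbing the rest of the path. A finite check of the extremal small-$a$ or small-$b$ instances that fall outside the range of the augmenting operations should then close the argument.
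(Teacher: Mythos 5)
Your sketch correctly identifies the overall philosophy of the paper (grid-based graphs, with one edge-type per element of the support), but as it stands it is a plan rather than a proof, and several of its structural claims point in directions that would not close the argument. The most serious issue is case~5: you call it the ``easiest'' case and suggest it might be a degenerate specialization of case~1 or case~4, but its hypothesis $a \geq 3$ is far \emph{weaker} than $a \geq x+y-2$ or $a+b \geq x+y-1$, so no specialization of those cases can reach it. In the paper this is Theorem~\ref{th:x=2}, and it is one of the places where genuinely new machinery is needed: since $a+b \geq y-1$ is only barely above the lower bound of Lemma~\ref{lem:fauxsets}, almost every fauxset modulo $y$ must be traversed as a single run of consecutive labels, which forces a width-$y$ grid in which the $y$-edges are the \emph{vertical} (in-fauxset) edges and the $1$- and $2$-edges do the hopping; this is the ``bridging with a partial row'' construction of Lemma~\ref{lem:2rect} combined with the insertion trick of Lemma~\ref{lem:hrjoin}. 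Your picture for case~4 (a $\{1,x\}$-based realization with length-$y$ ``teleports'' inserted) has the grid oriented the wrong way for the same reason: when $a+b$ is close to $y-1$ there is no freedom to insert $y$-edges into a width-$x$ layout, and the paper instead runs Lemma~\ref{lem:2rect} on a width-$y$ grid, with an additional tail-curl/parity device (Lemma~\ref{lem:finaledge}) when $a+b+y$ is odd.

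A second gap concerns cases~1 and~2. The paper obtains these by concatenating (Lemma~\ref{lem:concat}) an optimal standard realization of $\{1^{\omega(x,b)},x^b\}$ with one of $\{1^{a-\omega(x,b)},y^c\}$, where $\omega$ is determined exactly in Theorem~\ref{th:omega}; this gives the threshold $x+y-\epsilon$ of Corollary~\ref{cor:omega_gen}. Your ``parity obstruction'' intuition is consistent with the fact that $\omega(y,c)=y$ rather than $y-1$ in certain odd cases, but you do not notice that case~1 with $y$ odd then only yields $a \geq x+y-1$ by concatenation; reaching $x+y-2$ requires the separate constructions $\mathbf{h_5}$, $\mathbf{h_6}$, $\mathbf{h_7}$ and Lemma~\ref{lem:hrjoin} in Lemma~\ref{lem:omega_eo}. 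Finally, for case~3 your grid is the right one, but ``$a \geq x+1$ supplies the slack needed to stitch horizontal segments together'' conceals all of the actual work: the paper needs the $\gamma$-move, corner-cut and corner-flap local modifications, a five-way case analysis on $b+c \bmod (x+1)$ (resp.\ $\bmod\ x$) in each of Lemmas~\ref{lem:smallb} and~\ref{lem:smallc}, and the decomposition of large $b,c$ via the perfect realizations of Lemma~\ref{lem:perf1}. Your closing paragraph acknowledges that the bookkeeping is the hard part, but the bookkeeping \emph{is} the proof here, and the augmenting-operation framework you propose would still need all of these devices to be exhibited explicitly.
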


Section~\ref{sec:dtt} introduces some foundational  concepts and tools  that are used throughout the paper.   Section~\ref{sec:supp2}  considers linear realizations for multisets with supports of size~2; these become useful in constructing linear realizations for multisets with support of size~3 in the subsequent sections.   Section~\ref{sec:1xx+1} considers cases with~$y=x+1$ and Section~\ref{sec:evenx} considers cases with~$x$ even (where~$x<y$).  Section~\ref{sec:bhr}  uses automorphisms of~$\Z_v$ to prove new results about the BHR Conjecture from linear realizations, including the following result:

\begin{thm}\label{th:bhr}{\rm (Implications for BHR)}
For a given~$x$, the BHR Conjecture holds for all sufficiently large~$v$ with $\gcd(v,x) = \gcd(v,x+1) = 1$ for supports of the form~$\{1, 2, x \}$ and $\{1,x,x+1\}$, with the possible exception of~$L = \{1^a, 2^b, x^c\}$ with~$x$ odd and $a<3$.
\end{thm}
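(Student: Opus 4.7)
The strategy combines the linear realizations of Theorem~\ref{th:main} with the multiplicative automorphisms of $\Z_v$. For any $s$ coprime to $v$, multiplication by $s$ is an automorphism of $\Z_v$ under which a multiset $L$ is realizable iff $sL$ is. The hypothesis $\gcd(v,x)=\gcd(v,x+1)=1$ provides the automorphisms $s=x^{-1}$ and $s=(x+1)^{-1}$, forces $v$ to be odd (since one of $x,x+1$ is even), and makes the BHR admissibility clause automatic for both supports (no nontrivial divisor of $v$ divides any element of $\{1,x,x+1\}$ or $\{1,2,x\}$).

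For the support $\{1,x,x+1\}$ I would consider the three automorphisms $1$, $s_1=x^{-1}$, and $s_2=(x+1)^{-1}$. Each sends the support to one of the form $\{1,y,y+1\}$, because $\widehat{s_i}$ and $\widehat{s_i\pm 1}$ are always a pair of consecutive integers in reduced form. In the three resulting multisets the multiplicity of $1$ is $a$, $b$, $c$, respectively, and Theorem~\ref{th:main}(3) supplies a realization as soon as the $1$-multiplicity is at least (smallest non-$1$ element)~$+\,1$. Writing $y_0=x$, $y_1=\widehat{x^{-1}}$, $y_2=\widehat{(x+1)^{-1}}$, it suffices that one of $a\geq y_0+1$, $b\geq y_1+1$, $c\geq y_2+1$ holds. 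If all three fail then $v-1=a+b+c\leq y_0+y_1+y_2$; but from $xw\equiv\pm1\pmod v$ and $1\leq w\leq\lfloor v/2\rfloor$ one deduces $xw=kv\pm1$ with $k\leq\lfloor x/2\rfloor$, and the parity of $v$ rules out the extremal $k$, yielding $y_1\leq v/2-v/(2x)+O(1)$ and similarly for $y_2$. Hence $y_0+y_1+y_2<v-1$ once $v$ exceeds a threshold of order $x^2$, a contradiction; in whichever alternative holds, the resulting linear realization has maximum length $\leq \lfloor v/2\rfloor$ and is thus also a cyclic realization of the original multiset.

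For the support $\{1,2,x\}$ I would run the analogous three-automorphism scheme with Theorem~\ref{th:main}(5) in place of~(3). The identity handles $a\geq 3$ with $x\geq 5$, and $x\leq 7$ is already covered by Theorem~\ref{th:known}(2). For $a<3$ and $x$ even, $s=2^{-1}=(v+1)/2$ gives $\widehat{2^{-1}x}=x/2$, so the transformed support is $\{1,x/2,(v-1)/2\}$ with $1$-multiplicity $b$; whichever of Theorem~\ref{th:main}(1), (2), (4), or (5) matches the parities of $x/2$ and $(v-1)/2$ then applies once $b$ is sufficiently large in terms of $x$. When $c$ dominates instead, $s=x^{-1}$ gives transformed support $\{1,\widehat{x^{-1}},\widehat{2x^{-1}}\}$ with $1$-multiplicity $c$, and an analogous case analysis applies. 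A pigeonhole estimate on the resulting triple of thresholds, like the one for $\{1,x,x+1\}$, shows that at least one alternative succeeds once $v$ is beyond an $x$-dependent threshold.

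The main obstacle is the parity-sensitive case analysis for $\{1,2,x\}$ with $a<3$: the transformed supports are generally not of the form $\{1,2,y\}$, so Theorem~\ref{th:main}(5) cannot be re-applied and one must track parities through cases (1), (2), (4). The exclusion of $x$ odd with $a<3$ is forced by the fact that in that regime $s=2^{-1}$ sends $x$ to $(v-x)/2$ rather than $x/2$; the transformed support is $\{1,(v-x)/2,(v-1)/2\}$, whose smaller non-$1$ element is already of size $(v-x)/2\approx v/2$. None of the available cases of Theorem~\ref{th:main} handles this support unless $b\gtrsim v-O(x)$, which is incompatible with the pigeonhole range $b\approx v/2$ that the worst case forces. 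Resolving this regime would require either a strengthening of Theorem~\ref{th:main} or direct constructions not available through the present toolkit.
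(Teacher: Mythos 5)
Your treatment of $\{1,x,x+1\}$ is essentially the paper's proof of Theorem~\ref{th:bhr1xx+1}: the same three equivalent supports, all of consecutive-pair form, the same application of Theorem~\ref{th:1xx+1}, and the same pigeonhole using bounds of the shape $\widehat{x^{-1}}\leq v/2 - v/(2x)+O(1)$ to force a threshold of order $x^2$. That half is sound.

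The $\{1,2,x\}$ half has genuine gaps. First, ``the identity handles $a\geq 3$'' is not true as stated: Theorem~\ref{th:main}(5) also requires $a+b\geq x-1$ (equivalently $c\leq v-x$), so the identity only covers part of the range and the case $c>v-x$ must be caught by a transform. Second, your three-threshold pigeonhole cannot close numerically. The $2^{-1}$ transform yields the support $\{1,x/2,(v-1)/2\}$; the only parts of Theorem~\ref{th:main} whose threshold on the $1$-multiplicity is $O(x)$ are items (4) and (5), which require the middle element $x/2$ to be even (or equal to~$2$). For $x\equiv 2\pmod 4$ only item (2) is available, with threshold roughly $(v-1)/2+x/2$ on $b$. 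Likewise the $x^{-1}$ transform gives $\{1,\widehat{x^{-1}},\widehat{2x^{-1}}\}$ with threshold roughly $\widehat{x^{-1}}+\widehat{2x^{-1}}$ on $c$, a quantity that can be as large as about $3v/4$. Since $v/2+3v/4>v$, no pigeonhole over thresholds on $a$, $b$, $c$ separately can succeed, however large $v$ is. The argument that actually works (the paper's Theorem~\ref{th:bhr12x}) is a two-condition cover that never touches the $2^{-1}$ transform: the original support is handled for all $c\leq v-x$ via the joint condition $a+b\geq x-1$ (Theorem~\ref{th:known}.4 when $x$ is even, Theorem~\ref{th:x=2} with $a\geq 3$ otherwise --- this, not the behaviour of $2^{-1}x$, is the true source of the excluded case $x$ odd, $a<3$), and the $x^{-1}$ transform is handled via Corollary~\ref{cor:omega_gen} (not Theorem~\ref{th:main}, whose structural hypotheses the support $\{1,\widehat{x^{-1}},\widehat{2x^{-1}}\}$ will generally not satisfy) whenever $c\geq\widehat{x^{-1}}+\widehat{2x^{-1}}$. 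The crux, which you assert but do not supply, is the estimate $\widehat{x^{-1}}+\widehat{2x^{-1}}\leq v-x$ for $v>4x$; since both summands scale linearly with $v$, this requires the explicit four-case analysis on the position of $x^{-1}$ relative to $v/4$, $v/2$ and $3v/4$, and without it the two ranges are not known to overlap.
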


Note that if~$v$ is prime (and so we are considering Buratti's original conjecture) then~$v$ automatically satisfies the~$\gcd$ conditions of Theorem~\ref{th:bhr}.    This result is the first to demonstrate that the BHR~Conjecture holds for each of infinitely many supports of size~3 at infinitely many orders.

In Section~6 we also demonstrate that when~$x \leq 15$ that the BHR Conjecture holds  for~$\{1,x,x+1\}$ for all~$v$ with $\gcd(v,x) = \gcd(v,x+1) = 1$ with at most~15 remaining pairs~$(x,v)$.  There are no exceptions with~$x  \leq 7$ or $x \in \{ 9,10\}$ and in~\cite{AO+} new constructions deal with the remaining~15 cases.

\section{Definitions, tools and techniques}\label{sec:dtt}

We shall think about linear realizations in a geometric way, using the integer lattice.  For some~$x>1$, label the vertices of the lattice so that a vertex labeled~$a$ has~$a+1$ immediately to the right of it and~$a+x$ immediately above it.
We take~$v$ of the vertices, labeled $0, 1, \ldots v-1$, to be the vertices of~$K_v$, typically chosen to form an approximately rectangular region of width~$x$.  Horizontally adjacent vertices are connected by an edge with length~1 and vertically adjacent vertices are connected by an edge of length~$x$, facilitating visualization of realizations of multisets with support~$\{1,x\}$.  

A common situation is to take $v = qx + r$, with $0 \leq r < x$, and to use a rectangular grid of width~$x$ and height~$q$, with~$r$ further vertices forming an additional partial row above and/or below the rectangle.
When the support is~$\{1,x\}$, we construct realizations that use only horizontally and vertically adjacent edges and so are subgraphs of the lattice.

The first diagram in Figure~\ref{fig:intro_eg} shows the linear realization
$$[ 0,7,14,21,22,15,8,1,2,9,16,23,24,17,10,3,4,11,18,19,12,5,6,13,20]$$
for the multiset~$\{1^6,  7^{18}\}$.

\begin{figure}[tp]
\caption{Linear realizations for~$\{1^6,7^{18}\}$ and~$\{ 1^7, 7^{17}, 8^3\}$. }\label{fig:intro_eg}
\begin{center}
\begin{tikzpicture}[scale=0.9, every node/.style={transform shape}]

\fill (0,4) circle (2pt) ;
\fill (0,1) circle (2pt) ;
\fill (0,2) circle (2pt) ;
\fill (0,3) circle (2pt) ;

\fill (1,4) circle (2pt) ;
\fill (1,1) circle (2pt) ;
\fill (1,2) circle (2pt) ;
\fill (1,3) circle (2pt) ;

\fill (2,4) circle (2pt) ;
\fill (2,1) circle (2pt) ;
\fill (2,2) circle (2pt) ;
\fill (2,3) circle (2pt) ;

\fill (3,4) circle (2pt) ;
\fill (3,1) circle (2pt) ;
\fill (3,2) circle (2pt) ;
\fill (3,3) circle (2pt) ;

\fill (4,3) circle (2pt) ;
\fill (4,1) circle (2pt) ;
\fill (4,2) circle (2pt) ;

\fill (5,3) circle (2pt) ;
\fill (5,1) circle (2pt) ;
\fill (5,2) circle (2pt) ;

\fill (6,3) circle (2pt) ;
\fill (6,1) circle (2pt) ;
\fill (6,2) circle (2pt) ;

\draw (0,1) -- (0,4) -- (1,4) -- (1,1) -- (2,1) -- (2,4) -- (3,4) -- (3,1) 
   -- (4,1) -- (4,3) -- (5,3) -- (5,1) -- (6,1) --  (6,3) ;

\node at (-0.3, 1.2) {\tiny 0} ;
\node at (-0.3, 2.2) {\tiny 7} ;
\node at (-0.3, 3.2) {\tiny 14} ;
\node at (-0.3, 4.2) {\tiny 21} ;

\node at (0.7, 1.2) {\tiny 1} ;
\node at (0.7, 2.2) {\tiny 8} ;
\node at (0.7, 3.2) {\tiny 15} ;
\node at (0.7, 4.2) {\tiny 22} ;

\node at (1.7, 1.2) {\tiny 2} ;
\node at (1.7, 2.2) {\tiny 9} ;
\node at (1.7, 3.2) {\tiny 16} ;
\node at (1.7, 4.2) {\tiny 23} ;

\node at (2.7, 1.2) {\tiny 3} ;
\node at (2.7, 2.2) {\tiny 10} ;
\node at (2.7, 3.2) {\tiny 17} ;
\node at (2.7, 4.2) {\tiny 24} ;

\node at (3.7, 1.2) {\tiny 4} ;
\node at (3.7, 2.2) {\tiny 11} ;
\node at (3.7, 3.2) {\tiny 18} ;

\node at (4.7, 1.2) {\tiny 5} ;
\node at (4.7, 2.2) {\tiny 12} ;
\node at (4.7, 3.2) {\tiny 19} ;

\node at (5.7, 1.2) {\tiny 6} ;
\node at (5.7, 2.2) {\tiny 13} ;
\node at (5.7, 3.2) {\tiny 20} ;

\fill (8,1) circle (2pt) ;
\fill (8,2) circle (2pt) ;
\fill (8,3) circle (2pt) ;
\fill (8,4) circle (2pt) ;

\fill (9,1) circle (2pt) ;
\fill (9,2) circle (2pt) ;
\fill (9,3) circle (2pt) ;
\fill (9,4) circle (2pt) ;

\fill (10,1) circle (2pt) ;
\fill (10,2) circle (2pt) ;
\fill (10,3) circle (2pt) ;
\fill (10,4) circle (2pt) ;

\fill (11,1) circle (2pt) ;
\fill (11,2) circle (2pt) ;
\fill (11,3) circle (2pt) ;
\fill (11,4) circle (2pt) ;

\fill (12,1) circle (2pt) ;
\fill (12,2) circle (2pt) ;
\fill (12,3) circle (2pt) ;
\fill (12,4) circle (2pt) ;

\fill (13,1) circle (2pt) ;
\fill (13,2) circle (2pt) ;
\fill (13,3) circle (2pt) ;
\fill (13,4) circle (2pt) ;

\fill (14,0) circle (2pt) ;
\fill (14,1) circle (2pt) ;
\fill (14,2) circle (2pt) ;
\fill (14,3) circle (2pt) ;

\draw (14,0) -- (14,3) -- (13,3) -- (13,1) -- (12,1) -- (12,3) -- (13,4) -- (11,4) -- (11,1) -- (9,1)
   -- (10,2) -- (10,4) -- (9,4) -- (9,2) -- (8,1) -- (8,4) ;

\node at (13.7, 0.2) {\tiny 0} ;
\node at (13.7, 1.2) {\tiny 7} ;
\node at (13.7, 2.2) {\tiny 14} ;
\node at (13.7, 3.2) {\tiny 21} ;

\node at (7.7, 1.2) {\tiny 1} ;
\node at (7.7, 2.2) {\tiny 8} ;
\node at (7.7, 3.2) {\tiny 15} ;
\node at (7.7, 4.2) {\tiny 22} ;

\node at (8.7, 1.2) {\tiny 2} ;
\node at (8.7, 2.2) {\tiny 9} ;
\node at (8.7, 3.2) {\tiny 16} ;
\node at (8.7, 4.2) {\tiny 23} ;

\node at (9.7, 1.2) {\tiny 3} ;
\node at (9.7, 2.2) {\tiny 10} ;
\node at (9.7, 3.2) {\tiny 17} ;
\node at (9.7, 4.2) {\tiny 24} ;

\node at (10.7, 1.2) {\tiny 4} ;
\node at (10.7, 2.2) {\tiny 11} ;
\node at (10.7, 3.2) {\tiny 18} ;
\node at (10.7, 4.2) {\tiny 25} ;

\node at (11.7, 1.2) {\tiny 5} ;
\node at (11.7, 2.2) {\tiny 12} ;
\node at (11.7, 3.2) {\tiny 19} ;
\node at (11.7, 4.2) {\tiny 26} ;

\node at (12.7, 1.2) {\tiny 6} ;
\node at (12.7, 2.2) {\tiny 13} ;
\node at (12.7, 3.2) {\tiny 20} ;
\node at (12.7, 4.2) {\tiny 27} ;

\end{tikzpicture}\end{center}
\end{figure}
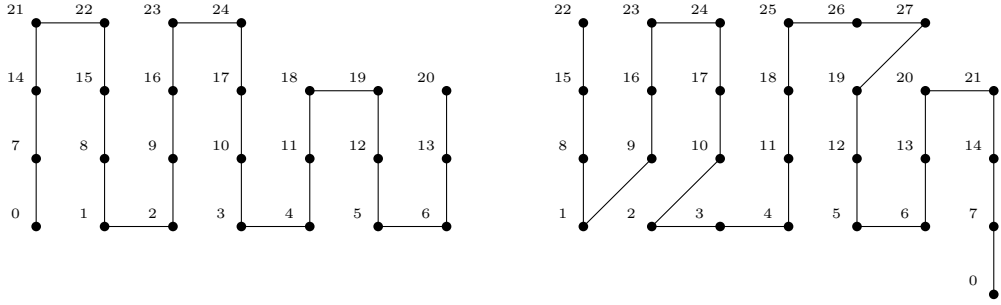

When the support is of the form~$\{1,x,y\}$ there will be one further type of edge to consider.  For example, if $y= x+1$ then we use edges that use diagonally adjacent vertices in the northeast/southwest direction.
The second diagram in Figure~\ref{fig:intro_eg} shows  the linear realization
$$[ 0,7,14,21,20,13,6,5,12,19,27,26,25,18,11,4,3,2,10,17,24,23,16,9,1,8,15,22 ]$$
for the multiset~$\{ 1^7, 7^{17}, 8^3\}$.   In this figure all vertices are labeled.  In future figures, to make the geometric patterns clearer, typically only a few vertices near the sides are labeled.

As well as~$y=x+1$, in this paper we use third elements corresponding to two other types of edge.  When~$y = x-1$ we have the edge between diagonally adjacent vertices in the northwest/southeast direction and when~$y$ is smaller than~$x-1$ we have an edge between vertices in the same row at horizontal distance~$y$.   In our constructions, all of these $y$-edges are incident with at least one vertex on a side of the grid.

We refer to graphs whose vertices are on the integer lattice chosen in such a way that vertical edges correspond to an element~$x$ as {\em grid-based} graphs.

The necessary condition in the BHR Conjecture follows from consideration of cosets of a subgroup in~$\Z_v$.  Here we prove Lemma~\ref{lem:fauxsets}, a parallel result for linear realizations.  Given~$x$ in the range~$1 <  x \leq \lfloor v/2 \rfloor$ and some~$k$ in the range~$0 \leq k < x-1$, define the {\em fauxset} $\varphi_k$ to be the vertex labels of~$K_v$ that are congruent to~$k \pmod{x}$.  Visually, in a grid-based graph with width~$x$, each set of edges in a vertical line segment is a fauxset.   When~$v$ is a multiple of~$x$ (and the grid-based graph is a rectangle) a fauxset is the same thing as a coset in~$\Z_v$,

\begin{lem}\label{lem:fauxsets}{\rm (Connecting fauxsets)}
Let~$L = \{ x_1^{a_1}, x_2^{a_2}, \ldots, x_t^{a_t} \}$ with $a_t > 0$.  If there is a linear realization for~$L$, then necessarily  $\sum_{i=1}^{t-1} a_i \geq x_t - 1$. 
\end{lem}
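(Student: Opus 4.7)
The plan is to imitate in the linear setting the standard coset argument that proves the necessary direction of the BHR Conjecture, with fauxsets playing the role of cosets. I may assume $x_t > 1$ (otherwise the inequality is trivial) and, since the elements of $\supp(L)$ are distinct, that $x_t$ is the largest element, so that none of $x_1, \ldots, x_{t-1}$ is a positive multiple of $x_t$.

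First I would partition the $v$ vertex labels $\{0, 1, \ldots, v-1\}$ into the $x_t$ fauxsets $\varphi_0, \ldots, \varphi_{x_t - 1}$ according to residue modulo $x_t$. Because an edge of linear length $\ell$ joins $u$ to $u + \ell$, its endpoints lie in the same fauxset if and only if $x_t \mid \ell$. By the choice of $x_t$, the only edges in a linear realization $\Gamma$ of $L$ whose endpoints share a fauxset are the $a_t$ edges of length $x_t$; every edge of length $x_1, \ldots, x_{t-1}$ crosses between distinct fauxsets.

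Next I would contract each fauxset to a super-vertex. Since the Hamiltonian path $\Gamma$ visits every vertex of $K_v$, its image is a spanning walk on the $x_t$ super-vertices. Any walk that visits $x_t$ distinct super-vertices must contain at least $x_t - 1$ edges that actually move between super-vertices — this is the elementary connectedness observation. These ``crossing'' edges in $\Gamma$ are precisely the edges of length not a multiple of $x_t$, namely those of length $x_1, \ldots, x_{t-1}$; there are $\sum_{i=1}^{t-1} a_i$ of them. Combining the two inequalities yields $\sum_{i=1}^{t-1} a_i \geq x_t - 1$.

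I do not expect any serious obstacle. The only subtlety is to observe that distinctness of the elements of $\supp(L)$ (and maximality of $x_t$) ensures that no smaller element of $L$ contributes a non-crossing edge; after that, the argument is just the observation that a Hamiltonian path, viewed modulo the additive structure determined by $x_t$, induces a walk whose non-$x_t$-length edges must span the quotient.
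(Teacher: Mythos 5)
Your proof is correct and is essentially the paper's own argument: partition the vertex labels into the $x_t$ fauxsets (residue classes mod $x_t$), observe that a Hamiltonian path spanning $x_t$ classes needs at least $x_t-1$ edges crossing between classes, and that every crossing edge has length not a multiple of $x_t$. One small caveat: your normalization ``assume $x_t$ is the largest element'' is not actually without loss of generality, since the lemma is stated and used for an arbitrary $x_t$ with $a_t>0$ (e.g.\ the paper applies it to $\{1^a,x^b,y^c\}$ with $x<y$ to conclude $a+c\geq x-1$, where the role of $x_t$ is played by the non-maximal $x$); but your argument survives unchanged once you weaken ``the crossing edges are precisely the edges of length $x_1,\ldots,x_{t-1}$'' to ``the crossing edges are among the edges of length $x_1,\ldots,x_{t-1}$,'' which is the only containment the count $\sum_{i=1}^{t-1}a_i\geq x_t-1$ requires.
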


\begin{proof}
Consider the fauxsets with respect to~$x_t$; there are $x_t$ of them.  A Hamiltonian path must visit each of these fauxsets and, unlike in a cyclic realization, only edges of lengths other than~$x_t$ move from one fauxset to another.  Hence there must be at least~$x_t - 1$ edges of length other than~$x_t$.
\end{proof}

We often consider multisets of the form~$\{1^a, x^b, y^c\}$ with $1 < x < y$.  In this situation Lemma~\ref{lem:fauxsets} says that for a linear realization to exist we must have $a+b \geq y-1$, hence the appearance of this condition in the statement of Theorem~\ref{th:main}.   Lemma~\ref{lem:fauxsets} also requires linear realizations to have~$a+c \geq x-1$ in this situation; in our results this usually follows immediately from constraints on~$a$ and/or~$x$.

In distinction to the parallel result for arbitrary realizations, the condition of Lemma~\ref{lem:fauxsets} is definitely not sufficient for the existence of a linear realization.   For example, in Section~\ref{sec:supp2} we see instances of multisets with support~$\{1, x\}$ that meet the condition but have no linear realization.

A linear realization in $K_v$ is {\em standard} if an end-vertex label is~0.  It is {\em perfect} if the two end-vertex labels are~$0$ and~$v-1$.    The realizations in Figure~\ref{fig:intro_eg} are standard but not perfect.

Given a Hamiltonian path in~$K_v$ we can form another one by replacing each vertex label~$x$ with~$v-1-x$.  This is the {\em complement} of the original path.  The complement realizes the same multiset as the original path.  If the original path is standard then the complement has~$v-1$ as an end-vertex label; if the original path is perfect then so is the complement.  

Given a Hamiltonian path in~$K_v$ we can form a non-Hamiltonian path in~$K_{v+t}$ by adding~$t$ to each vertex label.  This is the {\em (embedded) translation} of the original path by~$t$.  It realizes the same multiset of the original path.  If the original realization is standard then this translation has~$t$ as an end-vertex label; if the original realization is perfect then this translation has~$t$ and~$v+t$ as the end-vertex labels.

\begin{lem}\label{lem:concat}{\rm \cite{HR09,OPPS} (Concatenation) }
Suppose~$L$ and~$M$ are multisets that have a standard linear realization.   Then~$L \cup M$ has a linear realization.  If the standard linear realization of~$M$ is perfect then $L \cup M$ has a standard linear realization.  If both standard realizations are perfect then~$L \cup M$ has a perfect realization.
\end{lem}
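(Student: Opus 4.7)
The plan is to realize $L \cup M$ by shifting one of the given paths so that its vertex set abuts that of the other, then gluing the two paths at a shared endpoint. Both operations introduced just above the lemma---\emph{complement} and \emph{(embedded) translation}---preserve the multiset of linear lengths, so the only task is to arrange compatible endpoints.

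Write $v_L = |L|+1$ and $v_M = |M|+1$, and let $P_L, P_M$ be the given standard linear realizations on vertex sets $\{0,1,\ldots,v_L-1\}$ and $\{0,1,\ldots,v_M-1\}$, each with $0$ as one endpoint. For the first claim I would first replace $P_L$ by its complement, which is a linear realization of $L$ on the same vertex set but now has $v_L - 1$ as an endpoint, and then translate $P_M$ by $v_L - 1$ to obtain a linear realization of $M$ on $\{v_L - 1, v_L, \ldots, v_L + v_M - 2\}$ with $v_L - 1$ as an endpoint. These two paths share exactly the vertex $v_L - 1$, which is an endpoint of each, so gluing at $v_L - 1$ yields a Hamiltonian path on $\{0, 1, \ldots, v_L + v_M - 2\}$ whose edge-length multiset is $L \cup M$; this is the required linear realization in $K_{v_L + v_M - 1}$.

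For the second claim, assuming $P_M$ is perfect, the endpoints of $P_M$ are $0$ and $v_M - 1$. I would skip the complement step and instead translate $P_L$ by $v_M - 1$, so that its endpoints become $v_M - 1$ and $e_L + v_M - 1$ (where $e_L$ is the non-zero endpoint of $P_L$). Gluing to $P_M$ at the shared endpoint $v_M - 1$ gives a Hamiltonian path whose free endpoint from the $M$-side is $0$, making the concatenated realization standard. For the third claim, if in addition $P_L$ is perfect then $e_L = v_L - 1$, so the translated $P_L$ has endpoints $v_M - 1$ and $v_L + v_M - 2$. The concatenated path then has endpoints $0$ and $v_L + v_M - 2$, the extremes of $K_{v_L + v_M - 1}$, and so is perfect.

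No step here looks hard: the only substantive check is that the translated and complemented pieces overlap in exactly one vertex and that this vertex is an endpoint of each piece, both of which are immediate from the chosen shifts and from the recorded effect of complement and translation on endpoint labels.
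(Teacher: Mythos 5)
Your proof is correct and follows essentially the same route as the paper: complement one standard realization, translate the other so the paths meet in a single shared endpoint, and glue there, with the edge-length multiset preserved because complement and translation preserve linear lengths. The only (cosmetic) difference is that for the second and third claims you reverse the roles of the two paths and skip the complement so that the endpoint $0$ survives directly, whereas the paper uses one fixed construction and reads off the standard/perfect properties from the recorded effect of complement and translation on endpoints.
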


\begin{proof}
Suppose~$|L| = v$ and~$|M|=w$ and let~${\mathbf g}$ and ${\mathbf h}$ be standard linear realizations of~$L$ and~$M$ respectively.    Define a Hamiltonan path in~$K_{v+w+1}$, called the {\em concatenation} of ${\mathbf g}$ and~${\mathbf h}$ and denoted ${\mathbf g} \oplus {\mathbf h}$, by taking the complement of~${\mathbf g}$  and the translation of~${\mathbf h}$ by~$v-1$ and identifying the end-vertices labeled with~$v-1$.

The properties claimed follow immediately from the discussion prior to the statement of the lemma.
\end{proof}

We can interpret Lemma~\ref{lem:concat} in terms of the geometry of grid-based graphs.  Taking the complement is equivalent to rotating the graph through~$180^\circ$; taking a translation is equivalent to adding more points that get labeled with lower labels.   
The following consequence of Lemma~\ref{lem:concat} allows us to reach realizations for multisets where the number of 1-edges is arbitrary, subject to some minimum value.

\begin{lem}{\rm \cite{OPPS} (Appending 1-edges)}\label{lem:ones}
If~$L$ has a standard (respectively perfect) realization then $L \cup \{1^s\}$ has a standard (respectively perfect) realization for all~$s$.
\end{lem}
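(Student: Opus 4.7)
The plan is to derive this as an immediate corollary of Lemma~\ref{lem:concat} (Concatenation), by exhibiting a particularly trivial perfect standard linear realization of the pure multiset~$\{1^s\}$.

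First I would handle the degenerate case $s=0$ separately: then $L \cup \{1^s\} = L$ and there is nothing to prove. For $s \geq 1$, I would observe that the Hamiltonian path $[0, 1, 2, \ldots, s]$ in $K_{s+1}$ is a linear realization of~$\{1^s\}$, since every consecutive pair of vertex labels differs by exactly~$1$. This realization is standard (the end-vertex~$0$ has label~$0$) and in fact perfect, since the other end-vertex has label $s = (s+1) - 1$.

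With this in hand, I would apply Lemma~\ref{lem:concat} with $M = \{1^s\}$ and its perfect standard realization above. If $L$ has a standard realization, then since the realization of~$M$ is perfect, the lemma gives a standard realization of~$L \cup M = L \cup \{1^s\}$. If $L$ already has a perfect realization, then both standard realizations being concatenated are perfect, so the lemma delivers a perfect realization of $L \cup \{1^s\}$. Either way the claim follows, and both parenthetical ``respectively'' cases are handled in one stroke.

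The main obstacle is essentially nil: the entire content of the argument is the observation that $\{1^s\}$ admits the path $[0,1,\ldots,s]$ as a perfect standard realization. Geometrically, this corresponds to attaching a horizontal strip of $s$ new vertices to the left of the grid-based picture of~$L$ (after rotating~$L$ by~$180^\circ$), which visually confirms that the concatenated path remains Hamiltonian and that its edge multiset is precisely $L \cup \{1^s\}$.
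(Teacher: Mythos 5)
Your proposal is correct and follows exactly the paper's own argument: exhibit the trivial perfect standard realization of $\{1^s\}$ as a path of consecutive labels and feed it into Lemma~\ref{lem:concat} as the multiset~$M$, reading off the standard and perfect cases from the corresponding clauses of that lemma. The only (cosmetic) difference is that you write the path as $[0,1,\ldots,s]$ on $s+1$ vertices, which correctly carries $s$ edges of length~$1$, whereas the paper writes $[0,1,\ldots,s-1]$; your version is the more careful one.
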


\begin{proof}
For the multiset~$\{1^s\}$, the Hamiltonian path $[0,1, \ldots, s-1]$ is a perfect realization.  Using this in Lemma~\ref{lem:concat} we see that if~$L$ has a standard (respectively perfect) realization then $L \cup \{1^s \}$ has a standard (respectively perfect) realization for all~$s$, as required.
\end{proof}

We shall also need another way to combine linear realizations that have particular properties.

\begin{lem}{\rm \cite{HR09} (Insertion between~0 and~1) }\label{lem:hrjoin}
Suppose that $L$ has a realization in which~$0$ and~$1$ are adjacent vertices and that~$M$ has a standard linear realization with last vertex label~$1$.  Then~$(L \cup M) \setminus \{1\}$ has a linear realization.
\end{lem}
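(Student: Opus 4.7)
The plan is to splice $M$'s realization into $L$'s realization in place of the edge $\{0,1\}$. Combinatorially, this identifies the vertex labelled $0$ in $L$ with one endpoint of $M$ (and the vertex labelled $1$ in $L$ with the other endpoint), producing a Hamiltonian path on $v+w-2$ vertices whose edges are the edges of $L$ other than $\{0,1\}$ together with every edge of $M$. The length multiset is then $(L\setminus\{1\})\cup M = (L\cup M)\setminus\{1\}$, so the task reduces to choosing integer labels that make the spliced path geometrically consistent.

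I would first split $L$'s Hamiltonian path at the edge $\{0,1\}$ into two sub-paths $P_0$ (containing vertex $0$) and $P_1$ (containing vertex $1$); either may be trivial if the corresponding vertex is an endpoint of $L$'s path. I would then relabel each vertex labelled $j$ of $L$ by the translation $j\mapsto j+(w-2)$ and each vertex labelled $j$ of $M$ by the reflection $j\mapsto w-1-j$. Both operations preserve linear edge lengths. Under these relabelings, vertex~$0$ of $L$ and vertex~$1$ of $M$ both land at label $w-2$, while vertex~$1$ of $L$ and vertex~$0$ of $M$ both land at label $w-1$, giving exactly the identifications we want. Traversing $P_0$ (with its new labels) as far as the shared vertex at $w-2$, then $M$ in reverse (entering at $w-2$ and leaving at $w-1$), then $P_1$ (with its new labels) from the shared vertex at $w-1$ onward, produces a single Hamiltonian path on the combined vertex set.

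It remains to verify that the resulting label set is exactly $\{0,1,\ldots,v+w-3\}$ and that the length multiset is correct. The $w-2$ intermediate vertices of $M$ land in $\{0,1,\ldots,w-3\}$, the two identified vertices occupy $\{w-2,w-1\}$, and the $v-2$ non-endpoint vertices of $L$ land in $\{w,w+1,\ldots,v+w-3\}$; these three sets are disjoint and together exhaust $\{0,\ldots,v+w-3\}$. Every surviving edge of $L$ has its length preserved by the uniform shift and every edge of $M$ by the reflection, so the combined path is a linear realization of $(L\cup M)\setminus\{1\}$. The step I expect to be most delicate is checking that the translation on $L$ and the reflection on $M$ really do agree at \emph{both} shared vertices at once; this is why the specific constants $w-2$ and $w-1$ must appear and why a reflection (rather than a second translation) is needed on the $M$ side, since two translations agreeing at $0$ and at $1$ would force $M$'s intermediate labels to collide with $L$'s.
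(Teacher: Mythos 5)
Your proof is correct and takes essentially the same approach as the paper: the paper complements $L$'s realization (sending the $0$--$1$ edge to an edge between the two largest labels) and splices in a translation of $M$'s realization, which is exactly your construction up to a global reflection of the finished path. The accounting of labels and lengths matches, so there is nothing to fix.
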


\begin{proof}
Suppose~$|L| = v$ and~$|M|=w$ and let~${\mathbf g}$ and ${\mathbf h}$ be linear realizations of~$L$ and~$M$ respectively with the properties of the theorem statement.     We define a Hamiltonan path in~$K_{v+w}$ 
that realizes~$(L \cup M) \setminus \{1\}$.  Take the complement of~${\mathbf g}$, which has an edge between~$v-1$ and~$v$. Replace this edge with the translation of~${\mathbf h}$ by~$v-1$, which has end-vertices~$v-1$ and~$v$. This gives the required path.
\end{proof}

We introduce some notation that allows us to more concisely express the constructions of standard linear realizations.  It takes advantage of the fact that most of the constructions consist of a large number of in-fauxset edges and a small number of between-fauxset ones.  Again, let~$v = qx+r$ and consider the fauxsets~$\varphi_k$ for $0 \leq k < x$.  Let
$$q^* = q^*(v,r,k) =  
\begin{cases} 
q & \mathrm{if \  } k<r, \\
 q-1 & \mathrm{otherwise,}
\end{cases}
$$
so $q^*x + k$ is the largest vertex label of~$\varphi_k$.
 Let $\Psi_k^{(a,b)}$ denote the path 
$[ ax + k, (a+1)x+k, \ldots, bx+k ]$ of consecutive labels in~$\varphi_k$. If~$a=b$ (so the path is a single vertex) then we do not repeat the value and write $\Psi_k^{(a)}$. If $a=0$ and $b=q^*$ (so the path covers the whole fauxset) then we omit the superscript and write~$\Psi_k$.  

Given two paths~$\mathbf{p_1}$ and~$\mathbf{p_2}$, we call the path obtained by adding an $\ell$-edge between them a {\em bridge concatenation}, denoted by $\mathbf{p_1} \uplus^\ell \mathbf{p_2}$.  Note that~$\mathbf{p_1}$ and~$\mathbf{p_2}$ may be traversed in either direction and there might be more than one pair of ends of the respective paths with difference~$\ell$, so a bridge concatenation is not necessarily unique in general.  However, the choice to be made will always be clear in the constructions.  In particular, Hamiltonian paths described here usually start at~0 (that is, the realizations are standard), which indicates the direction of travel through the first part of the first fauxset~$\varphi_0$.    Frequently we take~$\ell=1$; in this case we omit the superscript and write~$\mathbf{p_1} \uplus \mathbf{p_2}$.  We denote repeated~$\uplus$ operations performed on $\mathbf{p_1}, \mathbf{p_2}, \ldots, \mathbf{p_m}$ in turn  in the obvious way as 
$$ \apph{{\it \ell}}{k=1}{m}  \mathbf{p_k} {\rm \ \ \ \ \ \ or \ \ \ \ \ \ \ }  \apph{}{k=1}{m} \mathbf{p_k}$$
and denote the operations on the same paths in the reverse order as
$$ \apph{{\it \ell}}{k=m}{1}  \mathbf{p_k} {\rm \ \ \ \ \ \ or \ \ \ \ \ \ \ }  \apph{}{k=m}{1} \mathbf{p_k}.$$
With this notation, the paths of Figure~\ref{fig:intro_eg} are
$$ \apph{}{k=0}{6} \Psi_k  {\rm \ \  \ and \  \ \ } 
  \Psi_0 \ \uplus \  \Psi_6^{(0,2)} \ \uplus \  \Psi_5^{(0,2)} \ \uplus^8 \  \Psi_6^3 \ \uplus  \ \Psi_5^3 \ \uplus \ \Psi_4 \ \uplus \  \Psi_3^0 \ \uplus \  \Psi_2^0 \ \uplus^8 \ 
    \Psi_3^{(1,3)} \ \uplus \  \Psi_2^{(1,3)} \ \uplus^8 \ \Psi_1  $$
respectively (when we consider paths similar to the second one, in Section~\ref{sec:1xx+1},  we shall have an alternative notational approach available).

Finally for this section we give a new construction that produces realizations that can be used in Lemma~\ref{lem:hrjoin} and illustrates this new notation.  We shall use it in Section~\ref{sec:evenx} when constructing linear realizations for multisets with support~$\{1,x,y\}$ where~$x<y$ and~$x$ is even.

\begin{lem}\label{lem:2rect}{\rm (Bridging with a partial row)} 
Let~$L_1$ and~$L_2$ be multisets of sizes~$v_1-1$ and~$v_2 - 1$ respectively and let~$x = v_1 + v_2$.  Let~$b$ be such that $b \equiv v_1 +1 \pmod{x}$.  If~$L_1$ and~$L_2$ have standard linear realizations, then there is a linear realization for~$L = L_1 \cup L_2 \cup \{1, x^b \}$ that has an edge between~$0$ and~$1$.
\end{lem}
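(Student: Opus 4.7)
The plan is to construct a Hamiltonian path on the $v = x + b$ vertices $\{0, 1, \ldots, v-1\}$ by viewing them as a grid-based graph of width $x = v_1 + v_2$: writing $b = mx + (v_1+1)$ for some $m \geq 0$, the vertex set decomposes into $m+1$ full rows of $x$ vertices plus a partial row of $v_1+1$ vertices at the top. The target path will use the hypothesized standard realizations of $L_1$ and $L_2$ as two contiguous sub-paths (after suitable translation and, if needed, reversal), and will stitch them together with the $b$ $x$-edges (vertical moves in the grid) and the single extra $1$-edge, with the $1$-edge being precisely $\{0,1\}$.

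First I would translate the standard realization of $L_1$ onto the labels $\{1, 2, \ldots, v_1\}$ so that the $1$-edge $\{0,1\}$ attaches vertex $0$ naturally to the endpoint labeled $1$, and translate the standard realization of $L_2$ onto a block of $v_2$ consecutive labels immediately beyond. The remaining $b$ labels form a partial-row/extras region, and the $b$ $x$-edges are inserted to thread that region onto the combined path as a zigzag between rows. Once the placements and the zigzag are fixed, the resulting path can be expressed compactly in the $\Psi_k^{(a,b)}$ and bridge-concatenation notation of Section~\ref{sec:dtt}, with the $L_1$ and $L_2$ realizations spliced in at the appropriate bridge points. For $m \geq 1$, each additional full row contributes a further $x$ vertical moves, which are inserted into the zigzag without disturbing the $\{0,1\}$ adjacency established at the bottom.

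The main obstacle is the degree bookkeeping: every internal vertex of the embedded $L_1$ or $L_2$ sub-realization already has degree $2$, so it must not be incident to any additional $x$-edge, whereas each of the four endpoints of the two sub-realizations may absorb exactly one further incident edge. Arranging the translations and orientations so that every $x$-edge in the bridge attaches only to endpoints of $L_1$ or $L_2$ or to genuinely-extra vertices is the delicate step, and it is precisely here that the congruence $b \equiv v_1 + 1 \pmod{x}$ is used: this is exactly the arithmetic that makes the width-$x$ partial row fit together with the available ``slots'' left by the endpoints of $L_1$ and $L_2$ and by the $1$-edge $\{0,1\}$. Once the placement is fixed, verifying that the traced path is Hamiltonian, has edge multiset equal to $L_1 \cup L_2 \cup \{1, x^b\}$, and contains the edge $\{0,1\}$ is a routine inspection.
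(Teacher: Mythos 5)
Your proposal takes a different route from the paper's, and the route does not work. You propose to embed the standard realizations of $L_1$ and $L_2$ as contiguous sub-paths sitting on the contiguous label blocks $\{1,\ldots,v_1\}$ and $\{v_1+1,\ldots,x\}$, and then to attach the remaining $b$ labels using only $x$-edges. Count degrees. Every leftover vertex $z>x$ is incident only to $x$-edges (the lone $1$-edge is spent on $\{0,1\}$ and all edges of $L_1\cup L_2$ live inside the two blocks), so its path-neighbours lie in $\{z-x,z+x\}$; hence the leftovers in a fixed residue class modulo $x$ must occur as one consecutive run of the Hamiltonian path whose bottom end attaches by an $x$-edge to the unique vertex $j\leq x$ in that class. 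Already in the base case $b=v_1+1$ the leftovers are $\{0\}\cup\{x+1,\ldots,x+v_1\}$, and each of $x+1,\ldots,x+v_1$ has $j\in\{1,\ldots,v_1\}$ as its \emph{only} possible neighbour, so each would have to be an endpoint of the whole Hamiltonian path (or force $j$, an internal vertex of the embedded $L_1$-realization, up to degree $3$). For $v_1\geq 3$ there are not enough endpoints to absorb this, and for $m\geq 1$ the situation is worse, since every residue class acquires a chain but only four sub-path ends plus the vertex $0$ are available. The congruence $b\equiv v_1+1\pmod{x}$ cannot rescue this; no choice of translation or orientation of the two blocks can. The ``delicate step'' you defer to the end is therefore not delicate but impossible under your placement.

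The paper's construction is different in kind: the realizations of $L_1$ and $L_2$ are \emph{not} sub-paths of the final path. Since $|L_1|+|L_2|+1=x-1$ is exactly the minimum number of non-$x$-edges allowed by Lemma~\ref{lem:fauxsets}, each fauxset $\varphi_k$ must be traversed in one piece as $\Psi_k$; the only freedom is the order in which these columns are visited and the lengths of the $x-1$ bridges between consecutive columns. The paper uses the edge-length sequence of the $L_2$-realization (reversed, via a translated complement occupying the labels $v_1,\ldots,x$) as the itinerary for the columns $\varphi_{v_1+1},\ldots,\varphi_{x-1},\varphi_0$, then a $1$-edge from $0$ to $1$, then the edge-length sequence of the $L_1$-realization (translated to the labels $1,\ldots,v_1$) as the itinerary for $\varphi_1,\ldots,\varphi_{v_1}$; the $b$ vertical edges are exactly the within-column steps. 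The congruence on $b$ enters only to guarantee that all columns within each group have equal height, so every bridge lands on an existing vertex (the single exceptional hop into the taller $\varphi_0$ being handled by the choice of starting point). To repair your write-up you would need to replace ``use the realizations as sub-paths'' by ``use the realizations as itineraries for the columns''; as written, the proposal cannot be completed.
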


\begin{proof}
Let~$\mathbf{g} = [g_1, g_2, \ldots, g_{v_1}]$ and $\mathbf{h} = [h_1, h_2, \ldots, h_{v_2}]$, where~$g_1 = 0 = h_1$,  be standard linear realizations of~$L_1$ and~$L_2$ respectively.  

Observe that~$|L_1| + |L_2| + 1 = x-1$ and so there is the minimum number of ``hops" between the fauxsets with respect to~$x$ required by Lemma~\ref{lem:fauxsets}.  The desired linear realization must therefore traverse each fauxset fully as a sequence of consecutive vertex labels.

The sequence $[x-h_{v_2}, x-h_{v_2 - 1}, \ldots, x-h_1]$ is a translation of the complement of~$\mathbf{h}$, that uses the labels~$v_1, v_1+1, \ldots,  x$. Let the sequence of edge lengths be $\mathbf{e}= [e_1, e_2, \ldots, e_{v_2 -1}]$, which
is the  edge length sequence of~$\mathbf{h}$ in reverse order.  The sequence $[g_1 + 1, g_2+1, \ldots, g_{v_1}+1]$
is a translation of~$\mathbf{g}$ that uses the labels~$1,2, \ldots, v_1$.  Let the sequence of edge lengths be $\mathbf{d} = [d_1, d_2, \ldots, d_{v_1-1}]$, which is the same as the  edge length sequence of~$\mathbf{g}$.

Let~$v = |L| +1$ and
consider the following path:
$$\left( \apph{\mathbf{e}}{i= x-h_{v_2}}{x-h_1} \Psi_i \right)  \uplus \left(  \apph{\mathbf{d}}{i=g_1+1}{g_{v_1}+1} \Psi_i    \right)$$
where~$\mathbf{d}$ indicates that those fauxsets are joined with the edge length sequences of~$\mathbf{d}$ in turn, and similarly for~$\mathbf{e}$, and 
where we choose the starting point so that the hop to~$\Psi_x = \Psi_0$, which is the last step in the first parentheses, occurs between the largest vertex labels of~$\varphi_{x-h_2}$ and~$\varphi_0$.

As~$b \equiv v_1 +1 \pmod{x}$, the fauxsets~$\varphi_0, \ldots, \varphi_{v_1 -1}$ are all the same size, as are the fauxsets~$\varphi_{v_1}, \ldots, \varphi_{x-1}$.  
The hops between fauxsets are valid as all of the fauxsets within each pair of parentheses are the same size (with the exception of~$\Psi_0$ in the first pair, which we have dealt with in choosing the starting point of the path).  All fauxsets are covered and so  this is a Hamiltonian path.  Between fauxsets it has edge lengths~$L_2$ from the first pair of parentheses, $L_1$ from the second pair of parentheses, and~$1$ where the two are joined.  This 1-edge is between vertices~0 and~1.  Within the fauxsets we get~$b$ edges of length~$x$.   Hence we have the required linear realization.
\end{proof}

Figure~\ref{fig:129} illustrates the construction of Lemma~\ref{lem:2rect} for~$L_1 = \{1, 2^3 \}$, which has standard linear realization~$(0,2,4,3,1)$, and $L_2 = \{ 1, 2^2 \}$, which has standard linear realization~$(0,2,3,1)$.  We have~$x=5+4 =9$ and take~$b = 24$, which is congruent to~$5+1 \pmod{9}$.

\begin{figure}[tp]
\caption{Linear realizations of~$\{1^{3}, 2^5, 9^{24}\}$ and $\{1^2, 2^5, 8^{21} \}$, constructed via Lemmas~\ref{lem:2rect} and~\ref{lem:2squash} respectively.}\label{fig:129}
\begin{center}
\vspace{5mm}
\begin{tikzpicture}[scale=0.86, every node/.style={transform shape}]

\fill (1,2) circle (2pt) ;
\fill (1,3) circle (2pt) ;
\fill (1,4) circle (2pt) ;

\fill (2,2) circle (2pt) ;
\fill (2,3) circle (2pt) ;
\fill (2,4) circle (2pt) ;

\fill (3,2) circle (2pt) ;
\fill (3,3) circle (2pt) ;
\fill (3,4) circle (2pt) ;

\fill (4,1) circle (2pt) ;
\fill (4,2) circle (2pt) ;
\fill (4,3) circle (2pt) ;
\fill (4,4) circle (2pt) ;

\fill (5,1) circle (2pt) ;
\fill (5,2) circle (2pt) ;
\fill (5,3) circle (2pt) ;
\fill (5,4) circle (2pt) ;

\fill (6,1) circle (2pt) ;
\fill (6,2) circle (2pt) ;
\fill (6,3) circle (2pt) ;
\fill (6,4) circle (2pt) ;

\fill (7,1) circle (2pt) ;
\fill (7,2) circle (2pt) ;
\fill (7,3) circle (2pt) ;
\fill (7,4) circle (2pt) ;

\fill (8,1) circle (2pt) ;
\fill (8,2) circle (2pt) ;
\fill (8,3) circle (2pt) ;
\fill (8,4) circle (2pt) ;

\fill (9,1) circle (2pt) ;
\fill (9,2) circle (2pt) ;
\fill (9,3) circle (2pt) ;
\fill (9,4) circle (2pt) ;

\draw (2,2) -- (1,2) -- (1,4)   ;
\draw (2,2) -- (2,4)  ;
\draw (3,2) -- (3,4)  ;
\draw (5,1) -- (4,1) -- (4,4)  ;
\draw (5,1) -- (5,4)  ;
\draw (6,1) -- (6,4)  ;
\draw (7,1) -- (7,4)  ;
\draw (8,1) -- (8,4)  ;
\draw (9,1) -- (9,4) -- (8,4)  ;

\draw  plot [smooth] coordinates {(1,4) (2,4.5) (3,4)};
\draw  plot [smooth] coordinates {(2,4) (3,4.5) (4,4)};
\draw  plot [smooth] coordinates {(5,4) (6,4.5) (7,4)};
\draw  plot [smooth] coordinates {(6,1) (7,0.5) (8,1)};
\draw  plot [smooth] coordinates {(7,1) (8,0.5) (9,1)};

\node at (3.7, 1) {\tiny 0} ;  
\node at (3.7, 2) {\tiny 9} ;  
\node at (2.7, 2) {\tiny 8} ;  
\node at (0.7, 2) {\tiny 6} ;  
\node at (0.7, 4) {\tiny 24} ;  
\node at (5.3, 1) {\tiny 1} ;  
\node at (6.3, 4) {\tiny 29} ;  
\node at (9.3, 4) {\tiny 32} ;  
\node at (9.3, 1) {\tiny 5} ;  

\fill (11,2) circle (2pt) ;
\fill (11,3) circle (2pt) ;
\fill (11,4) circle (2pt) ;

\fill (12,2) circle (2pt) ;
\fill (12,3) circle (2pt) ;
\fill (12,4) circle (2pt) ;

\fill (13,2) circle (2pt) ;
\fill (13,3) circle (2pt) ;
\fill (13,4) circle (2pt) ;

\fill (14,1) circle (2pt) ;
\fill (14,2) circle (2pt) ;
\fill (14,3) circle (2pt) ;
\fill (14,4) circle (2pt) ;

\fill (15,1) circle (2pt) ;
\fill (15,2) circle (2pt) ;
\fill (15,3) circle (2pt) ;
\fill (15,4) circle (2pt) ;

\fill (16,1) circle (2pt) ;
\fill (16,2) circle (2pt) ;
\fill (16,3) circle (2pt) ;
\fill (16,4) circle (2pt) ;

\fill (17,1) circle (2pt) ;
\fill (17,2) circle (2pt) ;
\fill (17,3) circle (2pt) ;
\fill (17,4) circle (2pt) ;

\fill (18,1) circle (2pt) ;
\fill (18,2) circle (2pt) ;
\fill (18,3) circle (2pt) ;
\fill (18,4) circle (2pt) ;

\draw (12,2) -- (11,2) -- (11,4)   ;
\draw (12,2) -- (12,4)  ;
\draw (13,2) -- (13,4)  ;
\draw (14,1)  -- (14,4)  ;
\draw (15,1) -- (15,4)  ;
\draw (16,1) -- (16,4)  ;
\draw (17,1) -- (17,4)  ;
\draw (18,4) -- (18,1) -- (17,1)  ;

\draw  plot [smooth] coordinates {(11,4) (12,4.5) (13,4)};
\draw  plot [smooth] coordinates {(12,4) (13,4.5) (14,4)};
\draw  plot [smooth] coordinates {(15,4) (16,4.5) (17,4)};
\draw  plot [smooth] coordinates {(14,1) (15,0.5) (16,1)};
\draw  plot [smooth] coordinates {(16,4) (17,4.5) (18,4)};

\node at (13.7, 1) {\tiny 0} ;  
\node at (13.7, 2) {\tiny 8} ;  
\node at (12.7, 2) {\tiny 7} ;  
\node at (10.7, 2) {\tiny 5} ;  
\node at (10.7, 4) {\tiny 21} ;  
\node at (15.3, 1) {\tiny 1} ;   
\node at (18.3, 4) {\tiny 28} ;  
\node at (18.3, 1) {\tiny 4} ;  

\end{tikzpicture}
\end{center}
\end{figure}
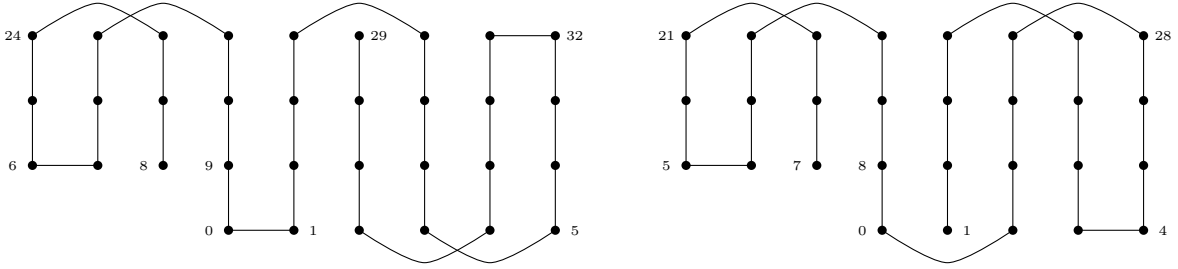

Although we don't use it in this paper, for completeness we record the following easy variation of the construction of Lemma~\ref{lem:2rect}.  It is obtained by removing the edge between~0 and~1 and having~$\Psi_0$ play the role of~$\Psi_1$.

\begin{lem}\label{lem:2squash}{\rm (Bridging with a partial row)} 
Let~$L_1$ and~$L_2$ be multisets of sizes~$v_1-1$ and~$v_2 - 1$ respectively and let~$x = v_1 + v_2 - 1$.  If~$L_1$ and~$L_2$ have standard linear realizations, then there is a linear realization for~$L = L_1 \cup L_2 \cup \{ x^b \}$ whenever~$b \equiv v_1 \pmod{x}$.
\end{lem}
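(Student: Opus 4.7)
My plan is to adapt the construction in the proof of Lemma~\ref{lem:2rect} exactly along the lines suggested in the hint preceding the statement: remove the length-$1$ edge that bridges the two parentheses in that construction, and shift the second parenthesis down by one so that $\Psi_0$ takes the role formerly played by $\Psi_1$. The first edge of $\mathbf{h}$, which in Lemma~\ref{lem:2rect} was the hop that entered $\Psi_0$ at the end of the first parenthesis, is then reassigned to serve as the new bridge between the two parentheses.

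First I would fix standard linear realizations $\mathbf{g} = [g_1,\ldots,g_{v_1}]$ and $\mathbf{h} = [h_1,\ldots,h_{v_2}]$ of $L_1$ and $L_2$, with edge-length sequences $\mathbf{d}$ and $\mathbf{e}$ respectively (where $\mathbf{e}$ is the reverse of the edge-length sequence of $\mathbf{h}$). Since $v = x + b$ and $b \equiv v_1 \pmod{x}$, writing $v = qx + r$ forces $r = v_1$, so the fauxsets $\varphi_0,\ldots,\varphi_{v_1-1}$ each have size $q+1$ and the fauxsets $\varphi_{v_1},\ldots,\varphi_{x-1}$ each have size $q$. As $|L_1| + |L_2| = x - 1$ matches the minimum from Lemma~\ref{lem:fauxsets}, each fauxset must be traversed as a contiguous run of consecutive labels.

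I would then propose the candidate path
$$\left( \apph{\mathbf{e}'}{i=x-h_{v_2}}{x-h_2} \Psi_i \right) \uplus^{h_2} \left( \apph{\mathbf{d}}{i=g_1}{g_{v_1}} \Psi_i \right),$$
where $\mathbf{e}'$ is $\mathbf{e}$ with its last entry removed, choosing the starting vertex so that the final hop inside the first parenthesis lands at the top of $\varphi_{x-h_2}$. Since $\{x - h_i : 2 \leq i \leq v_2\} = \{v_1, v_1+1, \ldots, x-1\}$ and $\{g_i : 1 \leq i \leq v_1\} = \{0,1,\ldots,v_1-1\}$, the first parenthesis covers exactly the $v_2 - 1$ short fauxsets and the second covers exactly the $v_1$ tall ones; together all fauxsets are visited exactly once.

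Finally I would verify the edge-length accounting. Within each parenthesis all fauxsets have the same size, so top-to-top and bottom-to-bottom transitions yield the prescribed lengths from $\mathbf{e}'$ and $\mathbf{d}$. The bridge runs from the top of $\varphi_{x-h_2}$ (label $qx - h_2$) to the top of $\varphi_0$ (label $qx$), supplying the one edge of length $h_2 = |h_2 - h_1|$ that was dropped from $\mathbf{e}$ and thereby recovering all of $L_2$. The within-fauxset $x$-edges total $v_1 q + (v_2 - 1)(q-1) = b$, completing the realization of $L_1 \cup L_2 \cup \{x^b\}$. The only real obstacle is the bookkeeping around the parity of the top/bottom alternation between hops, which forces the choice of starting vertex; this is resolved in precisely the same way as in Lemma~\ref{lem:2rect} and requires no new idea.
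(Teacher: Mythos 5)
Your construction is correct and is exactly the paper's intended argument: the paper gives only the one-line remark that Lemma~\ref{lem:2squash} is obtained from Lemma~\ref{lem:2rect} by deleting the $0$--$1$ edge and letting $\Psi_0$ play the role of $\Psi_1$, and your path (with the last entry of $\mathbf{e}$ promoted to the bridge of length $h_2$ from the top of $\varphi_{x-h_2}$ to the top of $\varphi_0$) is a faithful and correctly verified elaboration of that sketch, matching the second diagram of Figure~\ref{fig:129}. The only blemish is the phrase ``the final hop inside the first parenthesis lands at the top of $\varphi_{x-h_2}$,'' which should say that the traversal of $\varphi_{x-h_2}$ \emph{ends} at its top; your subsequent edge-length accounting uses the correct version.
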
 

The second diagram in Figure~\ref{fig:129} illustrates this with the same~$L_1$ and~$L_2$ and their standard realizations as the first diagram, but now with~$x=8$ and~$b = 21$.

\section{Realizations with Support of Size 2}\label{sec:supp2}

In this section  we consider linear realizations for multisets~$L$ that have $|\supp(L)| = 2$.  First we look at realizations when the support has two consecutive elements in Lemma~\ref{lem:perf1}, which provides two families of perfect linear realizations.  
We then move on to consider the case $\supp(L) = \{1,x\}$, classifying exactly when a standard linear realization exists in Theorem~\ref{th:omega}.

\begin{lem}\label{lem:perf1}{\rm (Useful perfect realizations)} 
For any $x$, there is a perfect linear realization for $\{ (x-1)^{x-1} , x^{x} \}$ and for $\{ x^{x+2} , (x+1)^{x-1} \}$.
\end{lem}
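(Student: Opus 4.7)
The plan is to exhibit explicit Hamiltonian paths for each of the two multisets, using the grid-based notation introduced in Section~\ref{sec:dtt}.

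For $\{(x-1)^{x-1}, x^x\}$ with $v = 2x$ vertices, I view the vertices as a width-$x$ grid so that each fauxset $\varphi_k = \{k, k+x\}$ has size two. I propose the path
$$\apph{x-1}{k=0}{x-1} \Psi_k \ = \ [0, x, 1, x+1, 2, x+2, \ldots, x-1, 2x-1],$$
where each $\Psi_k$ is traversed from its smaller to its larger label. Then I verify that the within-fauxset step $k \to k+x$ contributes one edge of length $x$ per fauxset (for a total of $x$ such edges), while each of the $x-1$ between-fauxset steps $k+x \to k+1$ has length $x-1$. The endpoints are $0$ and $2x-1$, so the realization is perfect.

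For $\{x^{x+2}, (x+1)^{x-1}\}$ with $v = 2x+2$ vertices, I again use the width-$x$ grid, where now $\varphi_0 = \{0, x, 2x\}$ and $\varphi_1 = \{1, x+1, 2x+1\}$ have size three while the remaining $\varphi_k = \{k, k+x\}$ for $2 \leq k \leq x-1$ have size two. Full traversal of every fauxset uses exactly $(3-1)+(3-1)+(x-2) = x+2$ edges of length $x$, matching the multiset, and requires $x-1$ inter-fauxset transitions, matching the available $x-1$ copies of $x+1$. I therefore take
$$\Psi_0 \ \uplus^{x+1} \Psi_{x-1} \ \uplus^{x+1} \Psi_{x-2} \ \uplus^{x+1} \cdots \ \uplus^{x+1} \Psi_2 \ \uplus^{x+1} \Psi_1,$$
with each $\Psi_k$ traversed smallest-to-largest; explicitly this is
$$[0, x, 2x, x-1, 2x-1, x-2, 2x-2, \ldots, 2, x+2, 1, x+1, 2x+1].$$
I then check that each inter-fauxset step goes from the top $j+x$ of $\varphi_j$ to the bottom $j-1$ of $\varphi_{j-1}$, an edge of length $x+1$, and that the endpoints are $0$ and $2x+1$, giving a perfect realization.

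There is no substantial obstacle: both paths are explicit and the edge-length verification is routine. The only bookkeeping point is the case $x=2$ of the second construction, where the middle chain $\Psi_{x-1} \uplus \cdots \uplus \Psi_2$ is empty and the path degenerates to $\Psi_0 \uplus^3 \Psi_1 = [0,2,4,1,3,5]$; this still gives a perfect realization of $\{2^4, 3\}$, so the construction works uniformly for all $x \geq 2$.
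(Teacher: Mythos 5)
Your proposal is correct and matches the paper's proof essentially verbatim: both constructions are exactly the paths $\mathbf{g_1}$ and $\mathbf{g_2}$ given in the paper, with the same fauxset-by-fauxset traversal and the same edge-length bookkeeping. The only difference is cosmetic (the paper verifies the lengths by pointing to Figure~\ref{fig:xx+1}, while you verify them directly, and you additionally note the degenerate $x=2$ case).
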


\begin{proof}
First let~$v = 2x$ and consider the multiset  $\{ (x-1)^{x-1} , x^{x} \}$.
This is realized by the Hamiltonian path
$$\mathbf{g_1} = \apph{\mathit{x}-1}{k=0}{x-1} \Psi_{k} =  [0, x ;  1, x+1 ;  2, x+2 ; \ldots ; x-2, 2x-2; x-1, 2x-1 ]$$
(where we use semi-colons to indicate the bridges between fauxsets).

Now let~$v = 2x+2$ and consider the multiset  $\{ x^{x+2} , (x+1)^{x-1} \}$.  This is realized by the Hamiltonian path
$$\mathbf{g_2} = \Psi_0 \uplus^{\mathit{x}+1} \left( \apph{\mathit{x}+1}{k=x-1}{1} \Psi_k \right) = [ 0, x, 2x; x-1, 2x-1; x-2, 2x-2; \ldots ;  2, x+2; 1, x+1, 2x+1 ].$$

Figure~\ref{fig:xx+1} shows the paths as grid-based graphs with the $x$-edges vertical and the $(x \pm 1)$-edges diagonal, making it straightforward to check that they have the required properties.
\end{proof}

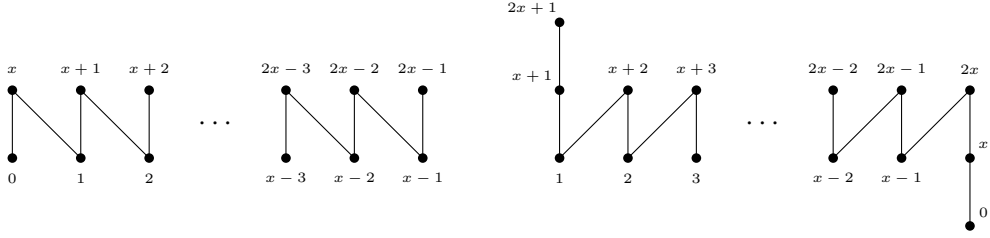
\begin{figure}
\caption{The perfect linear realizations~$\mathbf{g_1}$ and~$\mathbf{g_2}$ for $\{ (x-1)^{x-1} , x^{x} \}$ and $\{ x^{x+2} , (x+1)^{x-1} \}$ respectively.}\label{fig:xx+1}
\begin{center}
\begin{tikzpicture}[scale=0.9, every node/.style={transform shape}]

\fill (0,1) circle (2pt) ;
\fill (1,1) circle (2pt) ;
\fill (2,1) circle (2pt) ;
\fill (4,1) circle (2pt) ;
\fill (5,1) circle (2pt) ;
\fill (6,1) circle (2pt) ;

\fill (0,2) circle (2pt) ;
\fill (1,2) circle (2pt) ;
\fill (2,2) circle (2pt) ;
\fill (4,2) circle (2pt) ;
\fill (5,2) circle (2pt) ;
\fill (6,2) circle (2pt) ;

\draw (0,1) -- (0,2) -- (1,1) -- (1,2) -- (2,1) -- (2,2) ;
\draw (4,1) -- (4,2) -- (5,1) -- (5,2) -- (6,1) -- (6,2) ;

\node at (0, 0.7) {\tiny 0} ;           
\node at (1, 0.7) {\tiny 1} ;
\node at (2, 0.7) {\tiny 2} ;  
\node at (4, 0.7) {\tiny $x-3$} ;  
\node at (5, 0.7) {\tiny $x-2$} ;  
\node at (6, 0.7) {\tiny $x-1$} ;  

\node at (0, 2.3) {\tiny $x$} ;   
\node at (1, 2.3) {\tiny $x+1$} ;           
\node at (2, 2.3) {\tiny $x+2$} ;
\node at (4, 2.3) {\tiny $2x-3$} ;  
\node at (5, 2.3) {\tiny $2x-2$} ;  
\node at (6, 2.3) {\tiny $2x-1$} ;  

\node at (3, 1.5) {$\cdots$} ;

\fill (8,3) circle (2pt) ;
\fill (8,2) circle (2pt) ;
\fill (8,1) circle (2pt) ;
\fill (9,1) circle (2pt) ;
\fill (10,1) circle (2pt) ;
\fill (12,1) circle (2pt) ;
\fill (13,1) circle (2pt) ;

\fill (9,2) circle (2pt) ;
\fill (10,2) circle (2pt) ;
\fill (12,2) circle (2pt) ;
\fill (13,2) circle (2pt) ;
\fill (14,2) circle (2pt) ;
\fill (14,1) circle (2pt) ;
\fill (14,0) circle (2pt) ;

\draw (8,3) -- (8,1) -- (9,2) -- (9,1) -- (10,2) -- (10,1) ;
\draw (12,2) -- (12,1) -- (13,2) -- (13,1) -- (14,2) -- (14,0) ;

\node at (11, 1.5) {$\cdots$} ;

\node at (14.2, 0.2) {\tiny 0} ;
\node at (14.2, 1.2) {\tiny $x$} ;

\node at (8, 0.7) {\tiny 1} ;           
\node at (9, 0.7) {\tiny 2} ;
\node at (10, 0.7) {\tiny 3} ;  
\node at (12, 0.7) {\tiny $x-2$} ;  
\node at (13, 0.7) {\tiny $x-1$} ;  

\node at (7.6, 2.2) {\tiny $x+1$} ;           
\node at (7.6, 3.2) {\tiny $2x+1$} ;

\node at (9, 2.3) {\tiny $x+2$} ;
\node at (10, 2.3) {\tiny $x+3$} ;  
\node at (12, 2.3) {\tiny $2x-2$} ;  
\node at (13, 2.3) {\tiny $2x-1$} ;  
\node at (14, 2.3) {\tiny $2x$} ;

\end{tikzpicture}

\end{center}
\end{figure}

The construction for $\{ (x-1)^{x-1} , x^{x} \}$ in the proof of Lemma~\ref{lem:perf1} was first given in~\cite{OPPS}; the result for $\{ x^{x+2} , (x+1)^{x-1} \}$ seems to be new.  Using Lemma~\ref{lem:concat} we can concatenate these realizations to provide perfect realizations for more~2 and~3 (and more) element supports.  The construction idea of Lemma~\ref{lem:perf1} is explored further in~\cite{AO+}, including extensions to supports of the form~$\{x, kx \pm 1\}$ with~$k>1$.

Now let~$x \geq 2$.
Let $\omega(x,b)$ be the smallest possible number of 1-edges used in a standard linear realization with~$b$ $x$-edges and support~$\{1, x\}$.  By Lemma~\ref{lem:concat} and the discussion following it, the multiset~$\{1^a, x^b \}$ has a standard realization if and only if~$a \geq \omega(x,b)$.  We therefore want to determine~$\omega(x,b)$.  Theorem~\ref{th:omega} accomplishes this.  Some, but not all, of the constructions used in the proof are equivalent to those in~\cite{DJ09,HR09}; however those papers were not concerned with minimizing the number of 1-edges.

\begin{thm}\label{th:omega}{\rm ($\omega$-constructions)}
Let $x \geq 2$ and $b \geq 1$.  Let~$b = q'x + r'$ 
with $0 \leq r' < x$. Then~$\omega(x,b) = x-1$ in each of the following four situations: $x$ is even, $r'$ is even, $q'=0$, or $r'=1$.  Otherwise, $\omega(x,b) = x$.

The multiset~$\{ 1^a, x^b \}$ has a standard linear realization if and only if~$a \geq \omega(x,b)$.
\end{thm}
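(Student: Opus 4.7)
The biconditional "$\{1^a, x^b\}$ has a standard linear realization iff $a \geq \omega(x,b)$" is immediate: by definition $\omega(x,b)$ is the minimum such $a$, and Lemma~\ref{lem:ones} allows appending additional 1-edges while preserving standardness. So the theorem reduces to determining $\omega(x,b)$.

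The lower bound $\omega(x,b) \geq x-1$ comes from Lemma~\ref{lem:fauxsets}. To see when equality holds, I would study the structure of a putative realization of $\{1^{x-1}, x^b\}$. Having only $x-1$ 1-edges to connect the $x$ fauxsets $\varphi_0, \ldots, \varphi_{x-1}$ (with respect to $x$) forces each fauxset to be entered and exited exactly once, and hence traversed as a single block of consecutive labels in its arithmetic progression. The sequence of fauxsets visited is therefore a Hamiltonian path starting at $\varphi_0$ in the auxiliary graph whose edges capture the 1-edge connections; since any length-1 edge either connects $\varphi_k$ to $\varphi_{k+1}$ (same "row") or connects $\varphi_{x-1}$ to $\varphi_0$ (via the wraparound pair $jx-1, jx$), this auxiliary graph is the cycle $C_x$, so exactly two Hamiltonian paths from $\varphi_0$ exist: the \emph{increasing} order $\varphi_0, \varphi_1, \ldots, \varphi_{x-1}$ and the \emph{decreasing} order $\varphi_0, \varphi_{x-1}, \ldots, \varphi_1$.

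Next I would determine when each order is compatible with the row constraints. Set $m_k = q'+1$ if $k < r'$ (a \emph{tall} fauxset, carrying an extra vertex from the partial row) and $m_k = q'$ otherwise. A block traversal of a fauxset enters and exits at its two extremes, rows $0$ and $m_k$. Tracking the alternating pattern of exit rows shows that the increasing order succeeds exactly when $m_{2j} = m_{2j+1}$ for every consecutive pair, which is equivalent to $r'$ being even. For the decreasing order, the initial wraparound transition drops a row to row~$q'$ of~$\varphi_{x-1}$ and subsequent transitions alternate rows $0$ and $q'$ through the short fauxsets; a parity count shows that the critical short-to-tall transition at $\varphi_{r'} \to \varphi_{r'-1}$ lands on the valid endpoint row~$0$ precisely when $x$ and $r'$ have opposite parities. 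The boundary cases $r' = 1$ (no tall fauxsets after $\varphi_0$, so the short-to-tall obstruction disappears) and $q' = 0$ (short fauxsets are singletons, so row alternation collapses) are checked separately, and both permit the decreasing order. Taking the union of the conditions for the two orders yields exactly the four listed situations where $\omega(x,b) = x-1$.

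In the remaining case ($x$ odd, $r' \geq 3$ odd, $q' \geq 1$) both orders fail, so $\omega(x,b) \geq x$. For the matching upper bound, I would perturb the increasing-direction construction at the single bad transition between $\varphi_{r'-1}$ and $\varphi_{r'}$: using one additional 1-edge to jog past the mismatched row yields a realization with exactly $x$ 1-edges. The main difficulty lies in the row-alternation bookkeeping for the decreasing order, particularly across the tall/short boundary at index $r'$ and in the degenerate $q'=0$ regime; once set up using the grid-based picture, each of the four good sub-cases becomes a short geometric check, and the impossibility in the bad case follows from both directions being forced into incompatible row constraints.
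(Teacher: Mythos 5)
Your overall strategy is the paper's: Lemma~\ref{lem:fauxsets} gives the lower bound $x-1$, the minimality of between-fauxset edges forces each fauxset to be traversed as a single block so that only the increasing and decreasing fauxset orders are possible, and the parity bookkeeping you describe for the two orders reproduces exactly the paper's analysis of~$\mathbf{h_1}$ and~$\mathbf{h_2}$ (increasing works iff $r'$ is even; decreasing works iff $x-r'$ is odd, $r'=1$, or $q'=0$), whose union is the four listed situations. Your explicit observation that the fauxset-adjacency graph is a cycle, so only two Hamiltonian orders exist, is a slightly cleaner justification than the paper's.

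The one genuine problem is the upper-bound construction in the exceptional case ($x$ and $r'\geq 3$ odd, $q'\geq 1$). You propose to ``perturb the increasing-direction construction at the single bad transition between $\varphi_{r'-1}$ and $\varphi_{r'}$,'' but this cannot work as located. First, spending an extra 1-edge means adding a vertex, so $v$ becomes $(q'+1)x+(r'+1)$ and the grid changes shape: $\varphi_{r'}$ becomes tall, the increasing order now has no bad transition at all, and $\mathbf{h_1}$ simply realizes $\{1^{x-1},x^{b+1}\}$ rather than the target. What is actually needed is to convert one $x$-edge into a 1-edge by visiting some fauxset in \emph{two} blocks. Second, the location of that split is heavily constrained: the visit sequence is a walk of length $x$ in the cycle $C_x$ covering all $x$ fauxsets, so it must proceed monotonically for $x-1$ steps and only the final step may re-enter an already-visited fauxset; hence the doubly-visited fauxset can only be the penultimate one in the traversal order (or $\varphi_0$), never the interior fauxset $\varphi_{r'}$ in general. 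This is precisely why the paper places the modification at the end of the traversal --- the ``tail curl'' realizations $\mathbf{h_3}$ (splitting $\varphi_2$ in the decreasing order) and $\mathbf{h_4}$ (splitting $\varphi_{x-2}$ in the increasing order) --- and one still has to check that the hops around the split fauxset land on valid endpoints, which the paper does using $r=r'+1$ even. So your sketch identifies the right idea (one extra 1-edge via a local perturbation) but the proposed placement would fail; the construction must be relocated to the end of the fauxset order and re-verified there.
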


\begin{proof}
By Lemma~\ref{lem:fauxsets}, we have that $x-1$ is a lower bound for~$\omega(x,b)$.  A standard linear realization that meets this bound has $v = (x-1) + b + 1 = (q'+1)x + r'$; set $q = q'+1$ and~$r=r'$ so that $v = qx + r$ as usual.

It is also very limited in its structure: we have the minimum number of ways to hop from one fauxset to another and so all vertex labels in each fauxset~$\varphi_k$ must appear as the consecutive path~$\Psi_k$.  There are two potential constructions to consider, corresponding to whether we move to~$\varphi_1$ or~$\varphi_{x-1}$ from the initial fauxset~$\varphi_0$:
$$\mathbf{h_1} = \apph{}{k=0}{x-1} \Psi_k  {\rm \ \ \ and \ \ \ }  \mathbf{h_2} =  \Psi_0 \uplus \left( \apph{}{k=x-1}{1} \Psi_k \right) . $$
When do these form a valid Hamiltonian path?

Consider~$\mathbf{h_1}$.  This fails to be a Hamiltonian path if and only if we cannot make the hop from~$\varphi_{r-1}$ to~$\varphi_r$.   When~$ r=0$ this is not a consideration and the path is successful.  Otherwise, we can make this hop if and only if~$r$ is even as this is when~$\Psi_{r-1}$ is traversed in descending order.     

Consider~$\mathbf{h_2}$.  This fails to be a Hamiltonian path if and only if we cannot make the hop from~$\varphi_{r}$ to~$\varphi_{r-1}$.  If this hop is not required as part of the path then we are successful; this occurs if and only if~$r=1$. Assume~$r>1$.  We can make the hop if and only if~$x-r$ is odd (as this is when~$\Psi_{r}$ is traversed in descending order) or~$q'=0$ (as in this case~$\Psi_r$ is the single label~$r$).   We have that~$x-r$ is odd when~$x$ and~$r$ have opposite parities; in particular this path is successful when~$x$ is even and~$r$ is odd and hence for each even~$x$ one or other of the two paths is successful, depending on the parity of~$r$.

Recalling that~$r=r'$ for these paths, we see that we have established that $\omega(x,b) = x-1$ in the four cases given in the statement of the theorem.  Figure~\ref{fig:omega1} illustrates these four situations for~$(v,x,b) = (25, 7,18), (29, 8,21), (12, 7, 5), (22, 7,15)$.

\begin{figure}
\caption{The standard linear realizations~$\mathbf{h_1}$ (first diagram) and~$\mathbf{h_2}$ (remaining diagrams) of~$\{1^{x-1}, x^b\}$ for $(v,x,b) = (25, 7,18)$, $(29, 8,21)$, $(12, 7, 5)$ and $(22, 7,15)$.}\label{fig:omega1}
\begin{center}
\begin{tikzpicture}[scale=0.9, every node/.style={transform shape}]

\fill (1,1) circle (2pt) ;
\fill (1,2) circle (2pt) ;
\fill (1,5) circle (2pt) ;
\fill (1,6) circle (2pt) ;
\fill (1,7) circle (2pt) ;
\fill (1,8) circle (2pt) ;

\fill (2,1) circle (2pt) ;
\fill (2,2) circle (2pt) ;
\fill (2,5) circle (2pt) ;
\fill (2,6) circle (2pt) ;
\fill (2,7) circle (2pt) ;
\fill (2,8) circle (2pt) ;

\fill (3,1) circle (2pt) ;
\fill (3,2) circle (2pt) ;
\fill (3,5) circle (2pt) ;
\fill (3,6) circle (2pt) ;
\fill (3,7) circle (2pt) ;
\fill (3,8) circle (2pt) ;

\fill (4,1) circle (2pt) ;
\fill (4,2) circle (2pt) ;
\fill (4,5) circle (2pt) ;
\fill (4,6) circle (2pt) ;
\fill (4,7) circle (2pt) ;
\fill (4,8) circle (2pt) ;

\fill (5,1) circle (2pt) ;
\fill (5,5) circle (2pt) ;
\fill (5,6) circle (2pt) ;
\fill (5,7) circle (2pt) ;

\fill (6,1) circle (2pt) ;
\fill (6,5) circle (2pt) ;
\fill (6,6) circle (2pt) ;
\fill (6,7) circle (2pt) ;

\fill (7,0) circle (2pt) ;
\fill (7,1) circle (2pt) ;
\fill (7,5) circle (2pt) ;
\fill (7,6) circle (2pt) ;
\fill (7,7) circle (2pt) ;

\fill (9,1) circle (2pt) ;
\fill (9,2) circle (2pt) ;
\fill (9,3) circle (2pt) ;
\fill (9,5) circle (2pt) ;
\fill (9,6) circle (2pt) ;
\fill (9,7) circle (2pt) ;
\fill (9,8) circle (2pt) ;

\fill (10,1) circle (2pt) ;
\fill (10,2) circle (2pt) ;
\fill (10,3) circle (2pt) ;
\fill (10,5) circle (2pt) ;
\fill (10,6) circle (2pt) ;
\fill (10,7) circle (2pt) ;
\fill (10,8) circle (2pt) ;

\fill (11,1) circle (2pt) ;
\fill (11,2) circle (2pt) ;
\fill (11,3) circle (2pt) ;
\fill (11,5) circle (2pt) ;
\fill (11,6) circle (2pt) ;
\fill (11,7) circle (2pt) ;
\fill (11,8) circle (2pt) ;

\fill (12,1) circle (2pt) ;
\fill (12,2) circle (2pt) ;
\fill (12,3) circle (2pt) ;
\fill (12,5) circle (2pt) ;
\fill (12,6) circle (2pt) ;
\fill (12,7) circle (2pt) ;
\fill (12,8) circle (2pt) ;

\fill (13,1) circle (2pt) ;
\fill (13,2) circle (2pt) ;
\fill (13,3) circle (2pt) ;
\fill (13,5) circle (2pt) ;
\fill (13,6) circle (2pt) ;
\fill (13,7) circle (2pt) ;

\fill (14,1) circle (2pt) ;
\fill (14,2) circle (2pt) ;
\fill (14,3) circle (2pt) ;
\fill (14,5) circle (2pt) ;
\fill (14,6) circle (2pt) ;
\fill (14,7) circle (2pt) ;

\fill (15,0) circle (2pt) ;
\fill (15,1) circle (2pt) ;
\fill (15,2) circle (2pt) ;
\fill (15,3) circle (2pt) ;
\fill (15,5) circle (2pt) ;
\fill (15,6) circle (2pt) ;
\fill (15,7) circle (2pt) ;

\fill (16,4) circle (2pt) ;
\fill (16,5) circle (2pt) ;
\fill (16,6) circle (2pt) ;
\fill (16,7) circle (2pt) ;

\draw (1,5) -- (1,8) -- (2,8) -- (2,5) 
      -- (3,5) -- (3,8) -- (4,8) -- (4,5)    
     --  (5,5) -- (5,7) -- (6,7) -- (6,5) -- (7,5) -- (7,7);
          
\draw (16,4) -- (16,7) -- (15,7) -- (15,5) 
     -- (14,5) -- (14,7) -- (13,7)  -- (13,5)
     -- (12,5) -- (12,8) -- (11,8) -- (11,5)
     -- (10,5) -- (10,8) -- (9,8) -- (9,5)  ;
     
\draw (7,0) -- (7,1) -- (6,1) -- (5,1) -- (4,1) 
      -- (4,2) -- (3,2) -- (3,1) -- (2,1) -- (2,2) -- (1,2) -- (1,1) ;     
     
\draw (15,0) -- (15,3) -- (14,3) -- (14,1)  
     -- (13,1) -- (13,3) -- (12,3) -- (12,1)
     -- (11,1) -- (11,3) -- (10,3) -- (10,1) -- (9,1) -- (9,3) ;

\node at (0.7, 1) {\tiny 1} ;  
\node at (0.7, 2) {\tiny 8} ;  
\node at (0.7, 5) {\tiny 0} ;  
\node at (0.7, 6) {\tiny 7} ;  
\node at (0.7, 8) {\tiny 21} ;  

\node at (4.3, 2) {\tiny 11} ;  
\node at (4.3, 8) {\tiny 24} ;  

\node at (7.3, 0) {\tiny 0} ;  
\node at (7.3, 1) {\tiny 7} ;  
\node at (7.3, 5) {\tiny 6} ;  
\node at (7.3, 7) {\tiny 20} ;  

\node at (8.7, 1) {\tiny 1} ;  
\node at (8.7, 3) {\tiny 15} ;  
\node at (8.7, 5) {\tiny 1} ;  
\node at (8.7, 8) {\tiny 25} ;  

\node at (12.3, 8) {\tiny 28} ;

\node at (15.3, 0) {\tiny 0} ;  
\node at (15.3, 1) {\tiny 7} ;
\node at (15.3, 3) {\tiny 21} ;  
\node at (16.3, 4) {\tiny 0} ;  
\node at (16.3, 5) {\tiny 8} ;  
\node at (16.3, 7) {\tiny 24} ;  

\end{tikzpicture}
\end{center}
\end{figure}
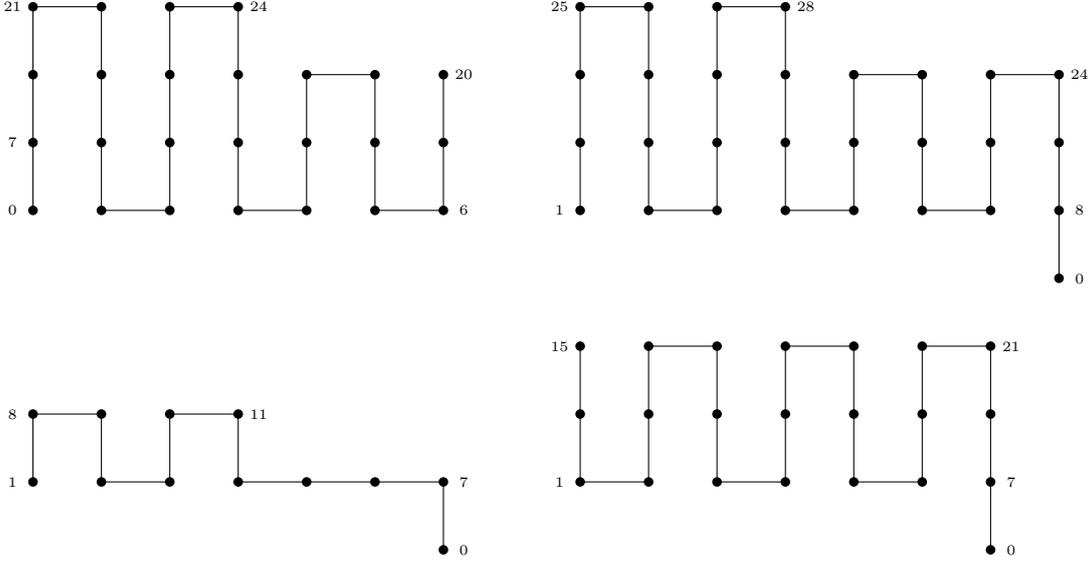

It remains to show that we can take~$\omega(x,b) = x$ in the uncovered cases.  Let~$x$ and~$r'$ be odd with~$r'>1$.  We construct a standard linear realization with
$$ v = x + b + 1 = (q'+1)x + r' + 1. $$
Set~$q = q'+1$ and $r = r'+1$ so that $v = qx + r$ as usual.
Let
$$\mathbf{h_3} = \Psi_0 \uplus \left( \apph{}{k=x-1}{3} \Psi_k \right) \uplus \Psi_2^{(q^*)} \uplus \Psi_1 \uplus \Psi_2^{(0, q^*-1)}. $$ 
First note that exactly one fauxset, $\varphi_{2}$, is visited twice and so if this is a Hamiltonian path then it realizes the correct multiset.  As~$r'$ is odd, $r$ is even.  As with~$\mathbf{h_2}$, we have~$x$ and~$r$ of different parity and so we can hop from $\varphi_r$ to~$\varphi_{r-1}$.  As~$r >3$, the three fauxsets~$\varphi_1$, $\varphi_2$ and~$\varphi_3$ each have~$q^* = q+1$ and so the second and third to last hops are successful.  

The first diagram of Figure~\ref{fig:omega2}  illustrates this for~$(v,x,b) = (25,7,17)$.

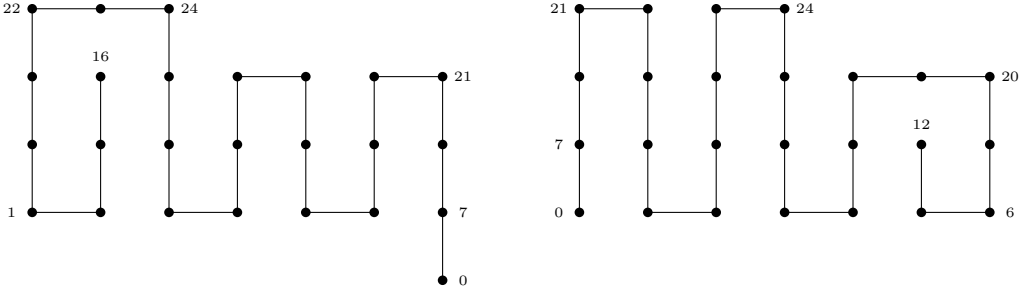
\begin{figure}
\caption{The standard linear realizations~$\mathbf{h_3}$ and~$\mathbf{h_4}$ of~$\{1^{x}, x^b\}$ for $(v,x,b) = (25, 7,17)$.}\label{fig:omega2}
\begin{center}
\begin{tikzpicture}[scale=0.9, every node/.style={transform shape}]

\fill (1,1) circle (2pt) ;
\fill (1,2) circle (2pt) ;
\fill (1,3) circle (2pt) ;
\fill (1,4) circle (2pt) ;

\fill (2,1) circle (2pt) ;
\fill (2,2) circle (2pt) ;
\fill (2,3) circle (2pt) ;
\fill (2,4) circle (2pt) ;

\fill (3,1) circle (2pt) ;
\fill (3,2) circle (2pt) ;
\fill (3,3) circle (2pt) ;
\fill (3,4) circle (2pt) ;

\fill (4,1) circle (2pt) ;
\fill (4,2) circle (2pt) ;
\fill (4,3) circle (2pt) ;

\fill (5,1) circle (2pt) ;
\fill (5,2) circle (2pt) ;
\fill (5,3) circle (2pt) ;

\fill (6,1) circle (2pt) ;
\fill (6,2) circle (2pt) ;
\fill (6,3) circle (2pt) ;

\fill (7,0) circle (2pt) ;
\fill (7,1) circle (2pt) ;
\fill (7,2) circle (2pt) ;
\fill (7,3) circle (2pt) ;

\fill (9,1) circle (2pt) ;
\fill (9,2) circle (2pt) ;
\fill (9,3) circle (2pt) ;
\fill (9,4) circle (2pt) ;

\fill (10,1) circle (2pt) ;
\fill (10,2) circle (2pt) ;
\fill (10,3) circle (2pt) ;
\fill (10,4) circle (2pt) ;

\fill (11,1) circle (2pt) ;
\fill (11,2) circle (2pt) ;
\fill (11,3) circle (2pt) ;
\fill (11,4) circle (2pt) ;

\fill (12,1) circle (2pt) ;
\fill (12,2) circle (2pt) ;
\fill (12,3) circle (2pt) ;
\fill (12,4) circle (2pt) ;

\fill (13,1) circle (2pt) ;
\fill (13,2) circle (2pt) ;
\fill (13,3) circle (2pt) ;

\fill (14,1) circle (2pt) ;
\fill (14,2) circle (2pt) ;
\fill (14,3) circle (2pt) ;

\fill (15,1) circle (2pt) ;
\fill (15,2) circle (2pt) ;
\fill (15,3) circle (2pt) ;

\draw  (7,0) -- (7,3) -- (6,3) -- (6,1) 
        -- (5,1) -- (5,3) -- (4,3) -- (4,1) 
        -- (3,1) -- (3,4) -- (1,4) -- (1,1) -- (2,1) -- (2,3) ;

\draw (9,1) -- (9,4) -- (10,4) -- (10,1)
     -- (11,1) -- (11,4) -- (12,4) -- (12,1)
     -- (13,1) -- (13,3) -- (14,3) -- (15,3) -- (15,1) -- (14,1) -- (14,2) ;
 
\node at (0.7, 1) {\tiny 1} ;  
\node at (0.7, 4) {\tiny 22} ;  
\node at (3.3, 4) {\tiny 24} ;  
\node at (2, 3.3) {\tiny 16} ;  
\node at (7.3, 0) {\tiny 0} ;  
\node at (7.3, 1) {\tiny 7} ;  
\node at (7.3, 3) {\tiny 21} ;  

\node at (8.7, 1) {\tiny 0} ;  
\node at (8.7, 2) {\tiny 7} ;  
\node at (8.7, 4) {\tiny 21} ;  
\node at (12.3, 4) {\tiny 24} ;  
\node at (14, 2.3) {\tiny 12} ;  
\node at (15.3, 1) {\tiny 6} ;  
\node at (15.3, 3) {\tiny 20} ;  

\end{tikzpicture}\end{center}
\end{figure}

The final statement of the theorem follows from Lemma~\ref{lem:ones}
\end{proof}

As an alternative to~$\mathbf{h_3}$ in the proof of Theorem~\ref{th:omega}, which moves through the fauxsets in decreasing order, we may use the realization~$\mathbf{h_4}$, which moves through the fauxsets in increasing order.
If~$r \neq x-1$, let
$$\mathbf{h_4} = \left( \apph{}{k=0}{x-3} \Psi_k  \right) \uplus \Psi_{x-2}^{(q^*,q^*)} \uplus \Psi_{x-1} \uplus \Psi_{x-2}^{(0,q^*-1)};$$     
if~$r = x-1$ let
$$\mathbf{h_4}  = \left( \apph{}{k=0}{x-2} \Psi_k  \right) \uplus \Psi_{x-2}^{(q^* - 1,q^*)} \uplus  \Psi_{x-1} \uplus \Psi_{x-2}^{(0,q^*-2)}.$$     
This is illustrated for~$(v,x,b) = (25,7,17)$ in the second diagram of Figure~\ref{fig:omega2}.  

The paths~$\mathbf{h_3}$ and~$\mathbf{h_4}$ can be thought of as adaptations of~$\mathbf{h_2}$ and~$\mathbf{h_1}$ respectively, obtained by passing through the penultimate fauxset using only one label and then returning to it after the final fauxset is traversed.  This trick, which replaces an~$x$ in the realized multiset with a~1, is frequently useful and we refer to it informally as a {\em tail curl}.  These four realizations are used throughout the paper; we call them {\em $\omega$-constructions}. 

We arrive at the following corollary, which improves items~6--10 of Theorem~\ref{th:known}.  

\begin{cor}\label{cor:omega_gen}{\rm ($\omega$-concatenations)}
There is a linear realization for $L = \{ 1^a, x^b, y^c \}$ when $a \geq x+y-\epsilon$, where $\epsilon$ is the number of elements of $\{x, y\}$ that are even.  
\end{cor}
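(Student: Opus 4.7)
The plan is to build the desired realization by concatenating two $\omega$-constructions. By Theorem~\ref{th:omega}, the multisets $\{1^{\omega(x,b)}, x^b\}$ and $\{1^{\omega(y,c)}, y^c\}$ each admit a standard linear realization. Applying Lemma~\ref{lem:concat} to these two realizations yields a standard linear realization of the combined multiset $\{1^{\omega(x,b)+\omega(y,c)}, x^b, y^c\}$. Lemma~\ref{lem:ones} then lets us pad with any number of additional 1-edges, so the only remaining task is to verify that $\omega(x,b)+\omega(y,c) \leq x+y-\epsilon$ in all cases.

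For this, I read off the values of $\omega$ from Theorem~\ref{th:omega}: whenever the first argument is even, $\omega$ equals one less than that argument, and in all cases $\omega(z, \cdot) \leq z$. Writing $\epsilon_x, \epsilon_y \in \{0,1\}$ for the indicators of ``$x$ is even'' and ``$y$ is even'', we therefore have $\omega(x,b) \leq x - \epsilon_x$ and $\omega(y,c) \leq y - \epsilon_y$. Summing gives
\[
\omega(x,b)+\omega(y,c) \leq (x-\epsilon_x) + (y-\epsilon_y) = x+y-\epsilon.
\]
Hence the hypothesis $a \geq x+y-\epsilon$ provides enough 1-edges to accommodate the concatenation, and any surplus is absorbed by Lemma~\ref{lem:ones}, producing the claimed linear realization of $\{1^a, x^b, y^c\}$.

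There is essentially no technical obstacle here: the corollary is simply a packaging of three prior results (Theorem~\ref{th:omega} supplying the 2-support building blocks, Lemma~\ref{lem:concat} gluing them, and Lemma~\ref{lem:ones} accounting for leftover 1-edges). The only step requiring any care is the parity case-analysis of $\omega$ to obtain the sharp upper bound $x+y-\epsilon$, and this is immediate from the explicit formula already established in Theorem~\ref{th:omega}.
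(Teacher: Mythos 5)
Your decomposition into two $\omega$-constructions and the parity bound $\omega(x,b)+\omega(y,c)\leq x+y-\epsilon$ are exactly the paper's reasoning, but the order in which you apply the two gluing lemmas opens a genuine gap. Lemma~\ref{lem:concat} applied to two standard realizations produces a linear realization of the union that is \emph{not} standard unless one of the factors is perfect: the end-vertices of $\mathbf{g}\oplus\mathbf{h}$ are the images of the non-zero ends of $\mathbf{g}$ and $\mathbf{h}$, and the $\omega$-constructions of Theorem~\ref{th:omega} are standard but generally not perfect (for instance $[0,2,3,1]$ realizing $\{1,2^2\}$ ends at $1$, not at $3$). So your intermediate realization of $\{1^{\omega(x,b)+\omega(y,c)},x^b,y^c\}$ need not have $0$ (or, after complementing, the top label) as an end-vertex, and Lemma~\ref{lem:ones}, which requires a standard realization in order to append $1$-edges, cannot then be invoked; the paper has no device for appending $1$-edges to an arbitrary non-standard linear realization.

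The repair is immediate and lands on the paper's own proof: distribute the surplus $1$-edges into one of the factors \emph{before} concatenating. Write $L=\{1^{\omega(x,b)},x^b\}\cup\{1^{a-\omega(x,b)},y^c\}$; your inequality gives $a-\omega(x,b)\geq\omega(y,c)$, so both factors have standard linear realizations by the final sentence of Theorem~\ref{th:omega} (which already has Lemma~\ref{lem:ones} built in), and a single application of Lemma~\ref{lem:concat} yields the required linear realization --- the corollary asks only for a linear realization, not a standard one, so no further padding step is needed.
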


\begin{proof}
Write~$L = \{1^{\omega(x,b)} , x^b \} \cup \{ 1^{a - \omega(x,b)} , y^c \}$.  We have $a - \omega(x,b) \geq \omega(y,c)$ and so each multiset in the union has a standard linear realization by Theorem~\ref{th:omega}.  Therefore~$L$ has a linear realization by Lemma~\ref{lem:concat}.
\end{proof}

By considering the subcases of Theorem~\ref{th:omega}, Corollary~\ref{cor:omega_gen} could be made slightly stronger and further improvements are possible using small variations of the construction.  We do not explore this fully here.  However, in Section~\ref{sec:evenx} we give one example of this as it gives a result that forms a step toward the main results.

\section{The case $U = \{1,x, x+1\} $}\label{sec:1xx+1}

The main goal in this section is to construct a grid-based linear realization for each multiset  $\{ 1^a, x^b, (x+1)^c \}$ with~$a \geq x+1$.  The bulk of the work is divided into two lemmas: Lemma~\ref{lem:smallb} covers cases with small~$b$ and Lemma~\ref{lem:smallc} covers cases with small~$c$.  The final step, taken in Theorem~\ref{th:1xx+1}, combines these results with the perfect realizations of Lemma~\ref{lem:perf1} to give the full result.  
All of the realizations constructed are standard.  An immediate consequence of this for multisets with support larger than~3 is given in Corollary~\ref{cor:bigsupp}.

In~\cite{HR09}, the first paper on the BHR Conjecture, Horak and Rosa introduced $\alpha$-~and $\beta$-transformations, or $\alpha$-~and $\beta$-moves, as methods of altering a Hamiltonian path through~$K_v$ in such a way that the realized multiset does not change very much.  While we do not use them explicitly in this paper (although the tail curl is a type of~$\alpha$-move) we use a different but similar idea in the lemmas of this section.  We call it a~$\gamma$-move to fit with Horak and Rosa's naming scheme.  

Given a Hamiltonian path in~$K_v$, a {\em $\gamma$-move} removes one vertex from the path and re-inserts it somewhere else.  This changes the realized multiset by at most three elements.  We shall move the vertex only small distances in terms of a grid-graph representation of the Hamiltonian path, allowing us to think of the $\gamma$-move as having a local effect, even though the vertex might move a considerable distance in terms of the Hamiltonian path itself.  The method of perturbing a Hamiltonian path in such a way that the effect is local when the vertices are arranged in a grid is similar to some ideas of~\cite[Section~2]{Nishat20}.

The $\gamma$-moves we use only change at most~2 of the elements of the realized multiset.   We use one type in Lemma~\ref{lem:smallb} and another in Lemma~\ref{lem:smallc}.  Geometrically, they are closely related: one is a $180^{\circ}$ rotation of the other in the grid-based graph representations we use.  Their reflections may be used in a similar way, but we do not need them here.

\begin{lem}\label{lem:smallb}{\rm (Small $b$)} 
Let $L = \{ 1^a, x^b, (x+1)^c \}$ with~$b < x$.  There is a standard linear realization for~$L$ whenever $a \geq \omega(x+1,b+c)$.  In particular, there is one when~$a \geq  x+1$.
\end{lem}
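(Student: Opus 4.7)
The plan is to build the desired realization in two stages: first produce a standard linear realization for the support $\{1, x+1\}$ using Theorem~\ref{th:omega}, then apply a sequence of local $\gamma$-moves to convert $b$ of the $(x+1)$-edges into $x$-edges. The ``in particular" clause follows immediately, since Theorem~\ref{th:omega} guarantees $\omega(x+1, b+c) \leq x+1$ for every admissible input, so $a \geq x+1$ automatically satisfies $a \geq \omega(x+1, b+c)$.

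For the main statement, I would first invoke Theorem~\ref{th:omega} to obtain a standard linear realization $\mathbf{h}$ of $\{1^{a_0}, (x+1)^{b+c}\}$, where $a_0 = \omega(x+1, b+c)$. Viewing $\mathbf{h}$ as a grid-based graph of width~$x+1$, the $(x+1)$-edges appear vertically, the 1-edges horizontally, and an NW-SE diagonal step between orthogonally adjacent cells has length~$x$. The task is now to replace $b$ of the vertical $(x+1)$-edges by such diagonal $x$-edges while preserving all other edge lengths. Each $\gamma$-move will act at a ``turn" of the zigzag: a single vertex is removed from the path and re-inserted one grid-step away, producing a net multiset change of exactly $-(x+1)$, $+x$ and leaving the 1-edge count unchanged. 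Since $b < x$ and the zigzag of $\mathbf{h}$ contains at least $x$ turns, $b$ such moves can be performed in succession without competing for the same vertices. Once they are complete, we have a standard linear realization of $\{1^{a_0}, x^b, (x+1)^c\}$, and Lemma~\ref{lem:ones} then appends the extra $a - a_0$ 1-edges to yield the desired realization of $L$.

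The main obstacle will be specifying the $\gamma$-move and verifying its effect on the multiset, case by case across the four $\omega$-constructions $\mathbf{h_1}$--$\mathbf{h_4}$, whose zigzag shapes and endpoint structures differ with the parity of $x+1$ and the residue $r'$. Each base construction will need a slightly different local bending pattern, and handling short columns near the top of the grid (where $q^* = q-1$ rather than $q$) will require some boundary care. However, because $b < x$ the total number of required bends is strictly smaller than the number of available turns, so the space considerations can always be met, and the natural choice is to place the $b$ bends in the earliest available turns so that they do not interact with any tail-curl modification that may be present in the base construction.
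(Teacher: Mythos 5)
Your overall strategy---start from the $\omega$-construction for $\{1^{\omega(x+1,b+c)}, (x+1)^{b+c}\}$ and locally perturb it to trade $(x+1)$-edges for $x$-edges---is exactly the paper's, but the central claim that each $\gamma$-move ``produces a net multiset change of exactly $-(x+1)$, $+x$'' while leaving the 1-edge count unchanged is a genuine gap. A $\gamma$-move deletes a vertex from the interior of the path (removing two edges and creating one) and reinserts it elsewhere in the interior (removing one edge and creating two), so three lengths are deleted and three are created; for the net effect to be a single swap of one $(x+1)$ for one $x$, two of the deleted lengths would have to coincide with two of the created lengths, and your proposal exhibits no such configuration. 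The move that actually works at a turn of these zigzags takes the subsequences $(g, g+x+1)$ and $(g+x+2, g+1, g+2)$ to $(g, g+1, g+x+1)$ and $(g+x+2, g+2)$, which exchanges \emph{two} $(x+1)$-edges for \emph{two} $x$-edges. Consequently $\gamma$-moves alone reach only even values of $b$, and your argument has no mechanism for odd $b$. The paper closes this parity gap with a separate device (a ``corner cut'': dropping the largest vertex label from a subsequence $(v-2, v-1, v-x-2)$, converting $\{1, x+1\}$ into $\{x\}$), and that device necessarily changes the number of 1-edges, which is why the final bookkeeping is more delicate than your closing appeal to Lemma~\ref{lem:ones}.

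A secondary problem is the claim that the zigzag has ``at least $x$ turns'' so the $b < x$ moves never compete for vertices. The number of admissible positions is in fact about $(x-1)/2$ (each yielding a two-edge swap), and which positions are admissible depends on the parities of $x+1$ and $r'$, on whether $b+c \geq x+1$, and on the location of any tail curl; the paper needs five separate cases, each with its own list of positions and its own variant of the corner cut (e.g.\ when $r = 0$ with $x+1$ odd), to verify that enough disjoint moves exist. You acknowledge that ``boundary care'' is needed but do not supply it. Without the two-for-two accounting, the odd-$b$ device, and the case analysis, the construction does not produce the claimed multisets.
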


\begin{proof}
The method is to start with the $\omega$-construction of a linear realization for the multiset $\{ 1^{\omega(x+1,b+c)}, (x+1)^{b+c} \}$ and replace~$b$ of the $(x+1)$-edges with~$x$-edges.  During this process the number of~$1$-edges also sometimes decreases.
We may then concatenate additional 1-edges as required at the start.  As~$\omega(x+1,b+c) \leq x+1$ this is sufficient to prove the result.  

The main method by which we make the edge replacements keeps the number of 1-edges fixed, increases the number of $x$-edges by~2 and reduces the number of~$(x+1)$-edges by 2.  

Suppose that, for some~$g$, a standard linear realization has the subsequences~$(g, g+x+1)$ and~$(g+x+2, g+1, g+2)$.  These realize the edge lengths~$\{ 1, (x+1)^2 \}$.  We may make a $\gamma$-move taking the~$g+1$ vertex from the center of the second subsequence to the first without disturbing the remainder of the realization. This gives subsequences~$(g, g+1, g+x+1)$ and~$(g+x+2, g+2)$, which together realize~$\{ 1, x^2 \}$.  Thus given a realization for some multiset~$L$ that has such subsequences we may make this switch to obtain a realization for~$(L \cup \{x^2\}) \setminus \{(x+1)^2\}$.    For the purposes of this proof, call this a {\em $\gamma$-move at~$g$} and don't consider any other $\gamma$-moves.

Further, if there are~$t$ values at which we can perform such a~$\gamma$-move we may obtain a realization for each~$(L \cup \{x^{2i}\}) \setminus \{(x+1)^{2i}\}$ for each~$i$ in the range~$1 \leq i \leq t$.

Clearly, these $\gamma$-moves can only exchange an even number of~$(x+1)$-edges for $x$-edges; we need an additional tool when~$b$ is odd.  Suppose that a realization for a multiset~$L$ of size~$v-1$ has subsequence~$(v-2, v-1, v-x-2)$.  This realizes~$\{1,x+1\}$.   We may omit~$v-1$ (the largest vertex label) to get subsequence $(v-2,v-x-2)$, which realizes~$\{x\}$.  The new sequence realizes~$(L \cup \{x\}) \setminus \{ 1, x+1 \}$.   Call this process a {\em corner cut}.

Write $b+c = q(x+1) + r$ with~$0 \leq r < x+1$.   
Exactly how we apply $\gamma$-moves and corner cuts splits into five cases, four with~$b+c \geq x+1$ corresponding to the options for the parities of~$r$ and~$x+1$ and a fifth for~$b+c < x+1$.  By Theorem~\ref{th:omega}, we assume~$b,c >0$ throughout.

Case 1: $b+c \geq x+1$,~$r$ even and~$x+1$ even.  We start with the $\omega$-construction~$\mathbf{h_1}$.  We may perform $\gamma$-moves at any (or all) of~$0,2,4,\ldots, x-3$ and we may perform a corner cut.   If~$b$ is even, perform $b/2$ $\gamma$-moves; if~$b$ is odd, perform~$(b-1)/2$ $\gamma$-moves and a corner cut.  For~$x+1=8$ and~$b+c = 20$, the first diagram of Figure~\ref{fig:41a} illustrates~$b=3$, using a corner cut and a~$\gamma$-move at~0, and the second illustrates the maximum required~$b = 6$, using~$\gamma$-moves at~0,~2  and~4.

\begin{figure}[tp]
\caption{Standard linear realizations for~$\{1^6, 7^3, 8^{17}\}$ and~$\{1^7, 7^6, 8^{14}\}$.}\label{fig:41a}
\begin{center}
\begin{tikzpicture}[scale=0.8, every node/.style={transform shape}]

\fill (0,1) circle (2pt) ;
\fill (0,2) circle (2pt) ;
\fill (0,3) circle (2pt) ;
\fill (0,4) circle (2pt) ;

\fill (1,1) circle (2pt) ;
\fill (1,2) circle (2pt) ;
\fill (1,3) circle (2pt) ;
\fill (1,4) circle (2pt) ;

\fill (2,1) circle (2pt) ;
\fill (2,2) circle (2pt) ;
\fill (2,3) circle (2pt) ;
\fill (2,4) circle (2pt) ;

\fill (3,1) circle (2pt) ;
\fill (3,2) circle (2pt) ;
\fill (3,3) circle (2pt) ;
\fill (3,4) circle (2pt) ;

\fill (4,1) circle (2pt) ;
\fill (4,2) circle (2pt) ;
\fill (4,3) circle (2pt) ;

\fill (5,1) circle (2pt) ;
\fill (5,2) circle (2pt) ;
\fill (5,3) circle (2pt) ;

\fill (6,1) circle (2pt) ;
\fill (6,2) circle (2pt) ;
\fill (6,3) circle (2pt) ;

\fill (7,1) circle (2pt) ;
\fill (7,2) circle (2pt) ;
\fill (7,3) circle (2pt) ;

\draw (0,1) -- (1,1) -- (0,2) -- (0,4) -- (1,4) -- (1,2) -- (2,1) -- (2,4) -- (3,3) -- (3,1) ;
\draw (3,1)  -- (4,1) -- (4,3) -- (5,3) -- (5,1) -- (6,1) -- (6,3) -- (7,3) -- (7,1);

\node at (-0.3, 1) {\tiny 0} ;  
\node at (-0.3, 2) {\tiny 8} ;  
\node at (-0.3, 4) {\tiny 24} ;  
\node at (2.3, 4) {\tiny 26} ;  
\node at (7.3, 1) {\tiny 7} ;  
\node at (7.3, 3) {\tiny 23} ;   

\fill (9,1) circle (2pt) ;
\fill (9,2) circle (2pt) ;
\fill (9,3) circle (2pt) ;
\fill (9,4) circle (2pt) ;

\fill (10,1) circle (2pt) ;
\fill (10,2) circle (2pt) ;
\fill (10,3) circle (2pt) ;
\fill (10,4) circle (2pt) ;

\fill (11,1) circle (2pt) ;
\fill (11,2) circle (2pt) ;
\fill (11,3) circle (2pt) ;
\fill (11,4) circle (2pt) ;

\fill (12,1) circle (2pt) ;
\fill (12,2) circle (2pt) ;
\fill (12,3) circle (2pt) ;
\fill (12,4) circle (2pt) ;

\fill (13,1) circle (2pt) ;
\fill (13,2) circle (2pt) ;
\fill (13,3) circle (2pt) ;

\fill (14,1) circle (2pt) ;
\fill (14,2) circle (2pt) ;
\fill (14,3) circle (2pt) ;

\fill (15,1) circle (2pt) ;
\fill (15,2) circle (2pt) ;
\fill (15,3) circle (2pt) ;

\fill (16,1) circle (2pt) ;
\fill (16,2) circle (2pt) ;
\fill (16,3) circle (2pt) ;

\draw (9,1) -- (10,1) -- (9,2) -- (9,4) -- (10,4) -- (10,2) -- (11,1) -- (12,1) -- (11,2) -- (11,4) -- (12,4) -- (12,2) 
 -- (13,1) -- (14,1) -- (13,2) -- (13,3) -- (14,3) -- (14,2) -- (15,1) -- (15,3) -- (16,3) -- (16,1)  ;

\node at (8.7, 1) {\tiny 0} ;  
\node at (8.7, 2) {\tiny 8} ;  
\node at (8.7, 4) {\tiny 24} ;  
\node at (12.3, 4) {\tiny 27} ;  
\node at (16.3, 1) {\tiny 7} ;  
\node at (16.3, 3) {\tiny 23} ;

\end{tikzpicture}\end{center}
\end{figure}
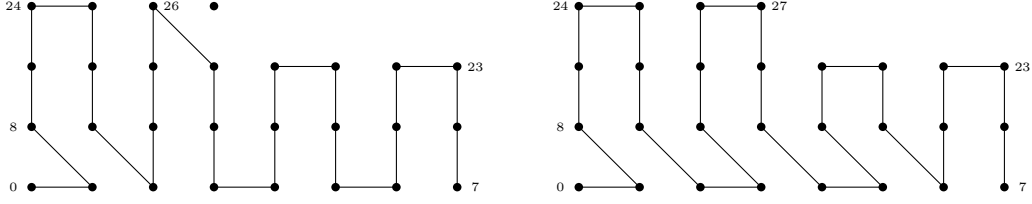

Case 2: $b+c \geq x+1$,~$r$ even and~$x+1$ odd. This is very similar to Case~1.  
We again use~$\mathbf{h_1}$ and now have $\gamma$-moves available at~$0,2,4,\ldots, x-4$.  This allows us to replace any even number of~$(x+1)$-edges with~$x$-edges up to a maximum of~$x-2$.  A corner cut is available to allow us to make one additional replacement to cover odd values of~$b$, except when~$r=0$.   When~$r=0$ we may use a slight variation of the corner cut.  The realization~$\mathbf{h_1}$ has subsequence~$(v-3, v-2, v-x-3)$, which realizes~$\{1, x+1\}$, and final label~$v-1$ (and this remains the case after applying any number of the~$\gamma$-moves).  Remove~$v-2$ from the subsequence and attach it at the end, adjacent to~$v-1$.  The resulting sequence realizes~$(L \cup \{ 1,x\} ) \setminus \{1,x+1\} = (L \cup \{ x\} ) \setminus \{ x+1\}$.  The first diagram of Figure~\ref{fig:41b} shows the construction for~$x+1 = 7$,~$r=0$ and~$b=5$, which requires the modified corner cut.

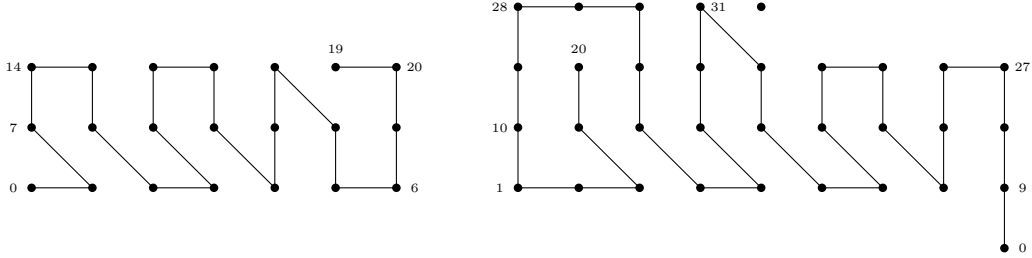
\begin{figure}[tp]
\caption{Standard linear realizations for~$\{1^5, 6^5, 7^{9}\}$ and~$\{1^8, 8^7, 9^{15}\}$.}\label{fig:41b}
\begin{center}
\begin{tikzpicture}[scale=0.8, every node/.style={transform shape}]

\fill (0,1) circle (2pt) ;
\fill (0,2) circle (2pt) ;
\fill (0,3) circle (2pt) ;

\fill (1,1) circle (2pt) ;
\fill (1,2) circle (2pt) ;
\fill (1,3) circle (2pt) ;

\fill (2,1) circle (2pt) ;
\fill (2,2) circle (2pt) ;
\fill (2,3) circle (2pt) ;

\fill (3,1) circle (2pt) ;
\fill (3,2) circle (2pt) ;
\fill (3,3) circle (2pt) ;

\fill (4,1) circle (2pt) ;
\fill (4,2) circle (2pt) ;
\fill (4,3) circle (2pt) ;

\fill (5,1) circle (2pt) ;
\fill (5,2) circle (2pt) ;
\fill (5,3) circle (2pt) ;

\fill (6,1) circle (2pt) ;
\fill (6,2) circle (2pt) ;
\fill (6,3) circle (2pt) ;

\draw (0,1) -- (1,1) -- (0,2) -- (0,3) -- (1,3) -- (1,2) -- (2,1)  -- (3,1) -- (2,2) -- (2,3) -- (3,3) -- (3,2) 
   -- (4,1) -- (4,3) -- (5,2) -- (5,1) -- (6,1) -- (6,3) -- (5,3) ;

\node at (-0.3, 1) {\tiny 0} ;  
\node at (-0.3, 2) {\tiny 7} ;  
\node at (-0.3, 3) {\tiny 14} ;    
\node at (6.3, 1) {\tiny 6} ;  
\node at (6.3, 3) {\tiny 20} ;   
\node at (5, 3.3) {\tiny 19} ;  

\fill (8,1) circle (2pt) ;
\fill (8,2) circle (2pt) ;
\fill (8,3) circle (2pt) ;
\fill (8,4) circle (2pt) ;

\fill (9,1) circle (2pt) ;
\fill (9,2) circle (2pt) ;
\fill (9,3) circle (2pt) ;
\fill (9,4) circle (2pt) ;

\fill (10,1) circle (2pt) ;
\fill (10,2) circle (2pt) ;
\fill (10,3) circle (2pt) ;
\fill (10,4) circle (2pt) ;

\fill (11,1) circle (2pt) ;
\fill (11,2) circle (2pt) ;
\fill (11,3) circle (2pt) ;
\fill (11,4) circle (2pt) ;

\fill (12,1) circle (2pt) ;
\fill (12,2) circle (2pt) ;
\fill (12,3) circle (2pt) ;
\fill (12,4) circle (2pt) ;

\fill (13,1) circle (2pt) ;
\fill (13,2) circle (2pt) ;
\fill (13,3) circle (2pt) ;

\fill (14,1) circle (2pt) ;
\fill (14,2) circle (2pt) ;
\fill (14,3) circle (2pt) ;

\fill (15,1) circle (2pt) ;
\fill (15,2) circle (2pt) ;
\fill (15,3) circle (2pt) ;

\fill (16,0) circle (2pt) ;
\fill (16,1) circle (2pt) ;
\fill (16,2) circle (2pt) ;s
\fill (16,3) circle (2pt) ;

\draw (10,4) -- (8,4) -- (8,1)   -- (9,1) -- (10,1) -- (9,2) -- (9,3)  ;
\draw  (10,4) -- (10,2) -- (11,1) -- (12,1) -- (11,2) -- (11,4) -- (12,3) -- (12,2) 
 -- (13,1) -- (14,1) -- (13,2) -- (13,3) -- (14,3) -- (14,2) -- (15,1) -- (15,3) -- (16,3) -- (16,0)  ;

\node at (7.7, 1) {\tiny 1} ;  
\node at (7.7, 2) {\tiny 10} ;  
\node at (7.7, 4) {\tiny 28} ;  
\node at (9, 3.3) {\tiny 20} ;
\node at (11.3, 4) {\tiny 31} ;  
\node at (16.3, 0) {\tiny 0} ;  
\node at (16.3, 1) {\tiny 9} ;  
\node at (16.3, 3) {\tiny 27} ;  

\end{tikzpicture}\end{center}
\end{figure}

Case 3: $b+c \geq x+1$,~$r$ odd and~$x+1$ even.   Start with the $\omega$-construction~$\mathbf{h_2}$ for $\{1^x, (x+1)^{b+c} \}$  (the second diagram of Figure~\ref{fig:omega1} illustrates this for~$r=5$ and~$x+1=8$).  We have $\gamma$-moves available at~$1, 3, \ldots, x-2$ and a corner cut is always possible.  This gives sufficient options to cover all possibilities for~$b$.  

Case 4: $b+c \geq x+1$,~$r$ odd and~$x+1$ odd.  This is the case where the $\omega$-construction can require~$x+1$ 1-edges, implemented via a tail curl.    We use the realization~$\mathbf{h_3}$, which is illustrated for~$x+1 = 7$ and~$r=3$ in the first diagram of Figure~\ref{fig:omega2}.  We have $\gamma$-moves available at~$2, 4, \ldots, x-2$ and a corner cut is always possible.  This gives sufficient options to cover all possibilities for~$b$.  The second diagram of Figure~\ref{fig:41b} shows the construction for~$x+1 = 9$, $r = 5$ and~$b=7$, which requires all available $\gamma$-moves and a corner cut.

Case 5: $b+c < x+1$.   If~$r$ is even, we use~$\mathbf{h_1}$ (which we may regardless of the parity of~$x+1$, as~$q=0$) and the argument goes through as in Case~1 above with~$r$ even and~$x+1$ even.    If~$r$ is odd, we use~$\mathbf{h_2}$ (which we may regardless of the parity of~$x+1$, as~$q=0$) and the argument goes through as in Case~3 above with~$r$ odd and~$x+1$ even.  
\end{proof}

The next lemma covers situations when~$c$ is small.  There is some overlap with Lemma~\ref{lem:smallb} when both~$b$ and~$c$ are small.

\begin{lem}\label{lem:smallc}{\rm (Small $c$)} 
Let $L = \{ 1^a, x^b, (x+1)^c \}$ where~$c < x+1$.  There is a standard linear realization for~$L$ whenever $a \geq \omega(x,b+c) + 1$.  In particular, there is one when~$a \geq  x+1$.

\end{lem}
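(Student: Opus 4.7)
The plan is to mirror the proof of Lemma~\ref{lem:smallb} with the roles of $x$ and $x+1$ interchanged. I would start from an $\omega$-construction (from Theorem~\ref{th:omega}) for $\{1^{\omega(x,b+c)},x^{b+c}\}$, viewed as a grid-based graph of width $x$ in which vertical edges have length $x$ and horizontal edges have length $1$. The goal is to replace $c$ of the $x$-edges with $(x+1)$-edges, which appear as diagonals bridging adjacent columns. Lemma~\ref{lem:ones} then appends any further $1$-edges required to reach an arbitrary $a \geq \omega(x,b+c)+1$, so it suffices to produce a realization using either $\omega(x,b+c)$ or $\omega(x,b+c)+1$ ones.

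The main workhorse will be a reverse $\gamma$-move, obtained by rotating the $\gamma$-move of Lemma~\ref{lem:smallb} through $180^{\circ}$: given subsequences $(g,g+1,g+x+1)$ and $(g+x+2,g+2)$ of the current realization, which together realize $\{1,x^2\}$, relocate vertex $g+1$ from the first subsequence into the second to obtain $(g,g+x+1)$ and $(g+x+2,g+1,g+2)$, realizing $\{1,(x+1)^2\}$. Geometrically, a horizontal $1$-edge flanked by two vertical $x$-edges is traded for two diagonal $(x+1)$-edges together with a displaced horizontal $1$-edge, so the $1$-edge count is preserved while two $x$-edges are swapped for two $(x+1)$-edges. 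This deals with even $c$. For odd $c$ we additionally apply a reverse corner cut: near the high-label corner of the grid, the $\omega$-construction has consecutive path vertices $v_1-x-1$ and $v_1-1$ joined by an $x$-edge, and we insert the new label $v_1$ between them to produce the subsequence $(v_1-1,v_1,v_1-x-1)$ with edges $\{1,x+1\}$. This adds one vertex and one $1$-edge, precisely accounting for the $+1$ in the bound on $a$.

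The proof is then completed by a case analysis parallel to that of Lemma~\ref{lem:smallb}, split by the parities of $x$ and $r'$ (where $b+c=q'x+r'$) and by whether $b+c<x$. In each case I would select the appropriate $\omega$-construction among $\mathbf{h_1},\mathbf{h_2},\mathbf{h_3},\mathbf{h_4}$, identify a collection of non-interfering positions $g = 1,3,5,\dots$ (and analogous positions at the opposite end) where the reverse $\gamma$-move applies, perform $\lfloor c/2\rfloor$ of them, and add a reverse corner cut when $c$ is odd. The ``in particular'' statement follows immediately from $\omega(x,b+c)\leq x$. The main obstacle I expect is the geometric bookkeeping: verifying in each subcase that enough non-overlapping reverse $\gamma$-moves are actually available within the chosen $\omega$-construction (since $c$ can be as large as $x$), and checking that the reverse corner cut still has the subsequence $(v_1-x-1,v_1-1)$ present after the $\gamma$-moves have been executed, so that the final object is genuinely a Hamiltonian path on $\{0,1,\dots,v-1\}$ realizing the target multiset.
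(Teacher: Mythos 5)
Your proposal follows essentially the same route as the paper's proof: start from the appropriate $\omega$-construction for $\{1^{\omega(x,b+c)},x^{b+c}\}$, trade pairs of $x$-edges for pairs of $(x+1)$-edges via the $180^{\circ}$-rotated $\gamma$-move, and handle odd $c$ by inserting a new top label between adjacent vertices differing by $x$ (the paper calls this a ``corner flap''), with a parity-based case analysis over $\mathbf{h_1},\dots,\mathbf{h_4}$. The only cosmetic difference is that the paper dispatches the ($x$ even, $r$ odd) case by appending a vertex labeled $-1$ to the ($x$ even, $r$ even) construction and shifting labels rather than working directly on $\mathbf{h_2}$, but this falls within the bookkeeping you already flag.
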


\begin{proof}
The approach is similar to that of Lemma~\ref{lem:smallb}.  Here we start with the $\omega$-construction of a linear realization for~$\{1^{\omega(x, b+c)} , x^{b+c} \}$ and replace~$c$ of the~$x$-edges with~$(x+1)$-edges.  This process might cause the number of 1-edges to change including, unlike Lemma~\ref{lem:smallb}, an increase in some cases.  However, in all cases, the number of 1-edges in the resulting realization is at most~$x+1$; we may concatenate additional 1-edges as required at the start.

The main method by which we make the edge replacements keeps the number of 1-edges fixed, increases the number of $(x+1)$-edges by~2 and reduces the number of~$x$-edges by 2.  

We again use a~$\gamma$-move.  This time we take subsequences $(g-x,g,g+1)$ and $(g-1,g-x-1)$ and move the vertex~$g$ to the middle of the second subsequence to give subsequences $(g-x,g+1)$ and $(g-1,g, g-x-1)$.   If the original sequence realizes~$L$ then the new sequence realizes~$(L \cup \{ (x+1)^2 \} ) \setminus \{ x^2 \}$.  For the purposes of this proof, call this process a {\em $\gamma$-move at~$g$}.  If there are~$t$ values at which we can perform such a~$\gamma$-move we may obtain a realization for each~$(L \cup \{(x+1)^{2i}\}) \setminus \{x^{2i}\}$ for each~$i$ in the range~$1 \leq i \leq t$.

To acquire an odd number of~$(x+1)$-edges we use a {\em corner flap}:   Suppose that a realization for a multiset~$L$ of size~$v-1$ has the vertex labels~$v-1$ and~$v-x-1$ adjacent to each other, giving a difference of~$x$.  We may insert vertex~$v$ between them, creating a realization for the multiset~$(L \cup \{1, x+1 \}) \setminus \{x\}$ of size~$v$.

The first diagram (when ignoring the red dashed line) in Figure~\ref{fig:42a} shows a realization for~$\{1^8, 8^{19}, 9^3 \}$ produced from the optimal realization~$\mathbf{h_1}$ for~$\{1^7, 8^{22} \}$ by using a $\gamma$-move at~26 and adding a corner flap.

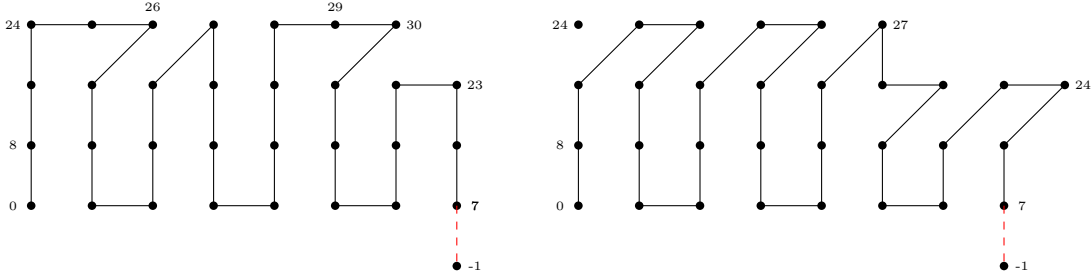
\begin{figure}[tp]
\caption{Realizations for~$\{1^8, 8^{19}, 9^3 \}$ and~$\{1^7, 8^{14}, 9^8\}$ (ignoring the red dashed line) and precursors to realizations for~$\{1^8, 8^{20}, 9^3 \}$ and~$\{1^7, 8^{15}, 9^8\}$ (red dashed line included).}\label{fig:42a}
\begin{center}
\begin{tikzpicture}[scale=0.8, every node/.style={transform shape}]

\fill (0,1) circle (2pt) ;
\fill (0,2) circle (2pt) ;
\fill (0,3) circle (2pt) ;
\fill (0,4) circle (2pt) ;

\fill (1,1) circle (2pt) ;
\fill (1,2) circle (2pt) ;
\fill (1,3) circle (2pt) ;
\fill (1,4) circle (2pt) ;

\fill (2,1) circle (2pt) ;
\fill (2,2) circle (2pt) ;
\fill (2,3) circle (2pt) ;
\fill (2,4) circle (2pt) ;

\fill (3,1) circle (2pt) ;
\fill (3,2) circle (2pt) ;
\fill (3,3) circle (2pt) ;
\fill (3,4) circle (2pt) ;

\fill (4,1) circle (2pt) ;
\fill (4,2) circle (2pt) ;
\fill (4,3) circle (2pt) ;
\fill (4,4) circle (2pt) ;

\fill (5,1) circle (2pt) ;
\fill (5,2) circle (2pt) ;
\fill (5,3) circle (2pt) ;
\fill (5,4) circle (2pt) ;

\fill (6,1) circle (2pt) ;
\fill (6,2) circle (2pt) ;
\fill (6,3) circle (2pt) ;
\fill (6,4) circle (2pt) ;

\fill (7,0) circle (2pt) ;
\fill (7,1) circle (2pt) ;
\fill (7,2) circle (2pt) ;
\fill (7,3) circle (2pt) ;

\draw (0,1) -- (0,4) -- (2,4) -- (1,3) -- (1,1) -- (2,1) -- (2,3) -- (3,4) -- (3,1) -- (4,1) --
              (4,4) -- (6,4) -- (5,3) -- (5,1) -- (6,1) -- (6,3) -- (7,3) -- (7,1) ;
\draw[red, dashed] (7,1) -- (7,0);

\node at (-0.3, 1) {\tiny 0} ;  
\node at (-0.3, 2) {\tiny 8} ;  
\node at (-0.3, 4) {\tiny 24} ;  
\node at (2, 4.3) {\tiny 26} ;
\node at (5, 4.3) {\tiny 29} ;
\node at (6.3, 4) {\tiny 30} ;   
\node at (7.3, 1) {\tiny 7} ;  
\node at (7.3, 1) {\tiny 7} ;  
\node at (7.3, 0) {\tiny -1} ;  
\node at (7.3, 3) {\tiny 23} ;  

\fill (9,1) circle (2pt) ;
\fill (9,2) circle (2pt) ;
\fill (9,3) circle (2pt) ;
\fill (9,4) circle (2pt) ;

\fill (10,1) circle (2pt) ;
\fill (10,2) circle (2pt) ;
\fill (10,3) circle (2pt) ;
\fill (10,4) circle (2pt) ;

\fill (11,1) circle (2pt) ;
\fill (11,2) circle (2pt) ;
\fill (11,3) circle (2pt) ;
\fill (11,4) circle (2pt) ;

\fill (12,1) circle (2pt) ;
\fill (12,2) circle (2pt) ;
\fill (12,3) circle (2pt) ;
\fill (12,4) circle (2pt) ;

\fill (13,1) circle (2pt) ;
\fill (13,2) circle (2pt) ;
\fill (13,3) circle (2pt) ;
\fill (13,4) circle (2pt) ;

\fill (14,1) circle (2pt) ;
\fill (14,2) circle (2pt) ;
\fill (14,3) circle (2pt) ;
\fill (14,4) circle (2pt) ;

\fill (15,1) circle (2pt) ;
\fill (15,2) circle (2pt) ;
\fill (15,3) circle (2pt) ;

\fill (16,0) circle (2pt) ;
\fill (16,1) circle (2pt) ;
\fill (16,2) circle (2pt) ;
\fill (16,3) circle (2pt) ;

\fill (17,3) circle (2pt) ;

\draw (9,1) -- (9,3) -- (10,4) -- (11,4) -- (10,3) -- (10,1) -- (11,1) -- (11,3) -- (12,4) -- (13,4) 
          -- (12,3) -- (12,1) -- (13,1) -- (13,3) -- (14,4) -- (14,3) -- (15,3) -- (14,2) -- (14,1) 
          -- (15,1) -- (15,2) -- (16,3) -- (17,3) -- (16,2) -- (16,1) ;
\draw[red, dashed] (16,1) -- (16,0);

\node at (8.7, 1) {\tiny 0} ;  
\node at (8.7, 2) {\tiny 8} ;  
\node at (8.7, 4) {\tiny 24} ;  
\node at (14.3, 4) {\tiny 27} ;  
\node at (16.3, 0) {\tiny -1} ;  
\node at (16.3, 1) {\tiny 7} ;  
\node at (17.3, 3) {\tiny 24} ;  

\end{tikzpicture}\end{center}
\end{figure}

Write~$b+c = qx+r$ with~$0 \leq r < x$.  By Theorem~\ref{th:omega}, we assume~$b,c >0$ throughout.  Similarly to Lemma~\ref{lem:smallb}, we have five cases: four depending on the parities of~$x$ and~$r$ when~$b+c \geq x$ (although there are some cross-case situations for small~$r$) and one for~$b+c < x+1$.

Case 1: $b+c \geq x$ with $x$ and~$r$ even.  Start with the realization~$\mathbf{h_1}$.  Provided~$b+c > x$ we may perform~$\gamma$-moves at any (or all) of $v-2, v-4, \ldots v - x$.  This allows us to replace any even number of~$x$-edges with~$(x+1)$-edges up to a maximum of~$x$.   We can also use a corner flap, so odd value of~$b$ are covered (in fact, this lets us make up to~$x+1$ exchanges; more than we need here but this fact is useful in the next case).  The second diagram (ignoring the red dashed line) illustrates the realization for~$\{1^7, 8^{14}, 9^8\}$ obtained from the realization~$\mathbf{h_1}$ for~$\{1^7, 8^{22}\}$ using all~$8/2 = 4$ $\gamma$-moves.  (Note that the vertex labelled~24 is drawn in a different place to make the resulting realization visually clearer and maintain the correspondence between edge orientation and length.)

When~$b+c = x$ we may perform $\gamma$-moves at any (or all) of~$v-2, v-4, \ldots v - x+2$ and also use a corner flap.  The only possible missing value is~$c = x$.  However, in this situation we have~$b=0$.

Case 2: $b+c \geq x$ with $x$ even and~$r$ odd.  We can easily build the required realizations for this case from the previous one.  Observe that the final label of the realization obtained in the case when both~$x$ and~$r$ are even is~$x-1$.    Add a new vertex labeled~$-1$ and attach it at the end using an~$x$-edge. Add~1 to all of the vertex labels.  This process adds one to~$b+c$ and hence~$1$ to~$r$, except when~$r = x-1$ in which case it becomes~$0$.    This method covers everything except~$b+c = x+1$ with~$c = x$. It is possible to use $\gamma$-moves on the realization~$\mathbf{h_2}$, but it is simpler to observe that we have~$b=1 <x $ and then use Lemma~\ref{lem:smallb}.

Figure~\ref{fig:42a} with the red dashed lines included illustrates the constructions for~$\{1^8, 8^{20}, 9^3 \}$ and~$\{1^7, 8^{15}, 9^8\}$ prior to adding~1 to the vertex labels.

Case 3: $b+c \geq x$ with odd $x$ and even~$r \neq 0$ or $r=1$.  When~$t$ is even we start with the $\omega$-construction~$\mathbf{h_2}$.  We may use a corner flap and $\gamma$-moves at any (or all) of $v-x, v-x+2, \ldots, v-r-1$ and $v-r+2, v-r+4, \ldots, v-2$.  This is sufficient to replace up to~$x$ $x$-edges with~$(x+1)$-edges as required.  The first diagram of Figure~\ref{fig:42b} illustrates this with the maximum number of $\gamma$-moves and a corner flap for~$x=7$ and~$r=4$.  The same construction works for~$r=1$, with the $\gamma$-moves all happening at the highest level, in positions $v-x+1, v-x+3, \ldots, v-2$.  This is illustrated in the second diagram of Figure~\ref{fig:42b} with the maximum number of~$\gamma$-moves and a corner flap for~$x=7$.

\begin{figure}[tp]
\caption{Standard linear realizations for~$\{1^7, 7^{11}, 8^{7}\}$ and~$\{1^7, 7^8, 8^{7}\}$.}\label{fig:42b}
\begin{center}
\begin{tikzpicture}[scale=0.9, every node/.style={transform shape}]

\fill (0,1) circle (2pt) ;
\fill (0,2) circle (2pt) ;
\fill (0,3) circle (2pt) ;
\fill (0,4) circle (2pt) ;

\fill (1,1) circle (2pt) ;
\fill (1,2) circle (2pt) ;
\fill (1,3) circle (2pt) ;
\fill (1,4) circle (2pt) ;

\fill (2,1) circle (2pt) ;
\fill (2,2) circle (2pt) ;
\fill (2,3) circle (2pt) ;
\fill (2,4) circle (2pt) ;

\fill (3,1) circle (2pt) ;
\fill (3,2) circle (2pt) ;
\fill (3,3) circle (2pt) ;
\fill (3,4) circle (2pt) ;

\fill (4,1) circle (2pt) ;
\fill (4,2) circle (2pt) ;
\fill (4,3) circle (2pt) ;

\fill (5,1) circle (2pt) ;
\fill (5,2) circle (2pt) ;
\fill (5,3) circle (2pt) ;

\fill (6,0) circle (2pt) ;
\fill (6,1) circle (2pt) ;
\fill (6,2) circle (2pt) ;
\fill (6,3) circle (2pt) ;

\draw (0,4) -- (1,4) -- (0,3) -- (0,1) -- (1,1) -- (1,3) -- (2,4)  -- (3,4) -- (2,3) -- (3,3) -- (2,2) -- (2,1) 
   -- (3,1) -- (3,2) -- (4,3) -- (5,3) -- (4,2) -- (4,1) -- (5,1)  -- (5,2) -- (6,3) -- (6,0) ;

\node at (-0.3, 1) {\tiny 1} ;  
\node at (-0.3, 2) {\tiny 8} ;  
\node at (-0.3, 4) {\tiny 22} ;  
\node at (6.3, 0) {\tiny 0} ;  
\node at (6.3, 1) {\tiny 7} ;  
\node at (6.3, 3) {\tiny 21} ;   
\node at (3.3, 4) {\tiny 25} ;

\fill (8,1) circle (2pt) ;
\fill (8,2) circle (2pt) ;
\fill (8,3) circle (2pt) ;

\fill (9,1) circle (2pt) ;
\fill (9,2) circle (2pt) ;
\fill (9,3) circle (2pt) ;

\fill (10,1) circle (2pt) ;
\fill (10,2) circle (2pt) ;
\fill (10,3) circle (2pt) ;

\fill (11,1) circle (2pt) ;
\fill (11,2) circle (2pt) ;
\fill (11,3) circle (2pt) ;

\fill (12,1) circle (2pt) ;
\fill (12,2) circle (2pt) ;
\fill (12,3) circle (2pt) ;

\fill (13,1) circle (2pt) ;
\fill (13,2) circle (2pt) ;
\fill (13,3) circle (2pt) ;

\fill (14,0) circle (2pt) ;
\fill (14,1) circle (2pt) ;
\fill (14,2) circle (2pt) ;
\fill (14,3) circle (2pt) ;

\fill (15,3)  circle (2pt) ;

\draw (8,3) -- (9,3) -- (8,2)   -- (8,1) -- (9,1) -- (9,2) -- (10,3)  
    --(11,3) -- (10,2) -- (10,1) -- (11,1) -- (11,2) -- (12,3) -- (13,3) -- (12,2) 
   -- (12,1) -- (13,1) -- (13,2) -- (14,3) -- (15,3) -- (14,2) -- (14,0)   ;

\node at (7.7, 1) {\tiny 1} ;  
\node at (7.7, 2) {\tiny 8} ;  
\node at (7.7, 3) {\tiny 15} ;  
\node at (14.3, 0) {\tiny 0} ;  
\node at (14.3, 1) {\tiny 7} ;  
\node at (15.3, 3) {\tiny 22} ;  

\end{tikzpicture}\end{center}
\end{figure}
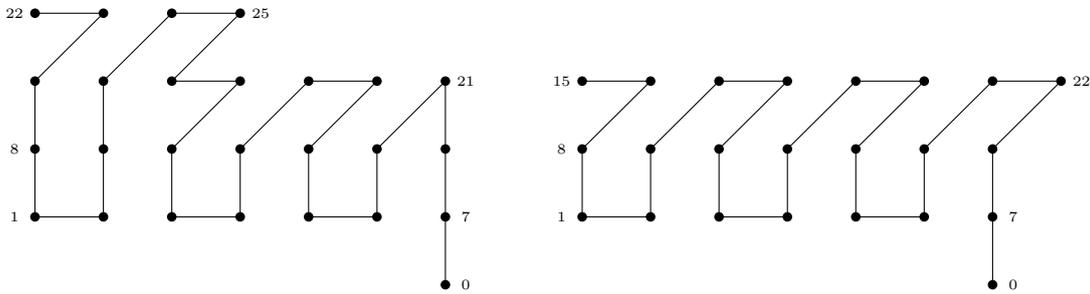

Case 4: $b+c \geq x$ with odd $x$ and odd~$r \neq 1$ or $r=0$.  When~$r$ is odd, start with the $\omega$-construction~$\mathbf{h_3}$; see the first diagram of Figure~\ref{fig:omega2} for a visual reminder.  We may use a corner flap and $\gamma$-moves at any (or all) of $v-x, v-x+2, \ldots, v-r$ and $v-r+3, v-r+5, \ldots, v-2$.  This is sufficient to replace up to~$x$ $x$-edges with~$(x+1)$-edges as required.  The first diagram of Figure~\ref{fig:42c} illustrates this with the maximum number of $\gamma$-moves and a corner flap for~$x=7$ and~$r=5$.  
The same construction works for~$r=0$, where the $\gamma$-moves can happen at~$v-x$  and at $v-x+3, v-x+5, \ldots, v-2$.  This is illustrated in the second diagram of Figure~\ref{fig:42c} with the maximum number of~$\gamma$-moves and a corner flap for~$x=7$.

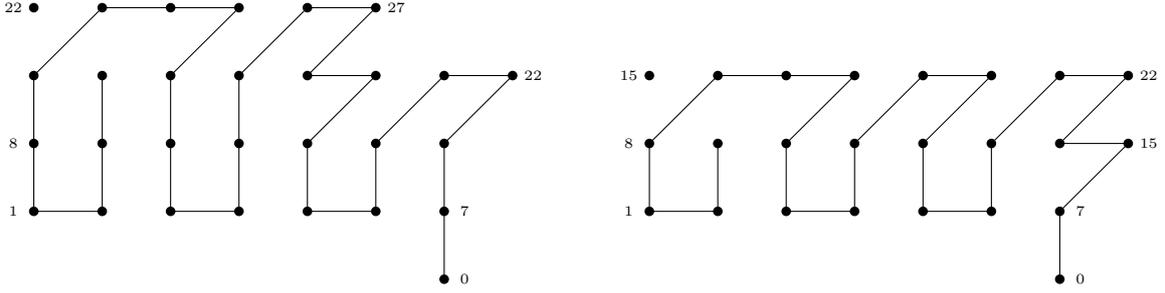
\begin{figure}[tp]
\caption{Standard linear realizations for~$\{1^8, 7^{12}, 8^{7}\}$ and~$\{1^8, 7^7, 8^{7}\}$.}\label{fig:42c}
\begin{center}
\begin{tikzpicture}[scale=0.9, every node/.style={transform shape}]

\fill (0,1) circle (2pt) ;
\fill (0,2) circle (2pt) ;
\fill (0,3) circle (2pt) ;
\fill (0,4) circle (2pt) ;

\fill (1,1) circle (2pt) ;
\fill (1,2) circle (2pt) ;
\fill (1,3) circle (2pt) ;
\fill (1,4) circle (2pt) ;

\fill (2,1) circle (2pt) ;
\fill (2,2) circle (2pt) ;
\fill (2,3) circle (2pt) ;
\fill (2,4) circle (2pt) ;

\fill (3,1) circle (2pt) ;
\fill (3,2) circle (2pt) ;
\fill (3,3) circle (2pt) ;
\fill (3,4) circle (2pt) ;

\fill (4,1) circle (2pt) ;
\fill (4,2) circle (2pt) ;
\fill (4,3) circle (2pt) ;
\fill (4,4) circle (2pt) ;

\fill (5,1) circle (2pt) ;
\fill (5,2) circle (2pt) ;
\fill (5,3) circle (2pt) ;
\fill (5,4) circle (2pt) ;

\fill (6,0) circle (2pt) ;
\fill (6,1) circle (2pt) ;
\fill (6,2) circle (2pt) ;
\fill (6,3) circle (2pt) ;

\fill (7,3) circle (2pt) ;

\draw (1,3) -- (1,1) -- (0,1) -- (0,3) -- (1,4) -- (3,4) -- (2,3)  -- (2,1) -- (3,1) -- (3,3) -- (4,4) -- (5,4) 
   -- (4,3) -- (5,3) -- (4,2) -- (4,1) -- (5,1) -- (5,2) -- (6,3)  -- (7,3) -- (6,2) -- (6,0) ;

\node at (-0.3, 1) {\tiny 1} ;  
\node at (-0.3, 2) {\tiny 8} ;  
\node at (-0.3, 4) {\tiny 22} ;  
\node at (6.3, 0) {\tiny 0} ;  
\node at (6.3, 1) {\tiny 7} ;  
\node at (7.3, 3) {\tiny 22} ;   
\node at (5.3, 4) {\tiny 27} ;  

\fill (9,1) circle (2pt) ;
\fill (9,2) circle (2pt) ;
\fill (9,3) circle (2pt) ;

\fill (10,1) circle (2pt) ;
\fill (10,2) circle (2pt) ;
\fill (10,3) circle (2pt) ;

\fill (11,1) circle (2pt) ;
\fill (11,2) circle (2pt) ;
\fill (11,3) circle (2pt) ;

\fill (12,1) circle (2pt) ;
\fill (12,2) circle (2pt) ;
\fill (12,3) circle (2pt) ;

\fill (13,1) circle (2pt) ;
\fill (13,2) circle (2pt) ;
\fill (13,3) circle (2pt) ;

\fill (14,1) circle (2pt) ;
\fill (14,2) circle (2pt) ;
\fill (14,3) circle (2pt) ;

\fill (15,0) circle (2pt) ;
\fill (15,1) circle (2pt) ;
\fill (15,2) circle (2pt) ;
\fill (15,3) circle (2pt) ;

\fill (16,2) circle (2pt) ;
\fill (16,3) circle (2pt) ;

\draw (10,2) -- (10,1) -- (9,1)   -- (9,2) -- (10,3) -- (12,3) -- (11,2)  
    --(11,1) -- (12,1) -- (12,2) -- (13,3) -- (14,3) -- (13,2) -- (13,1) -- (14,1) 
   -- (14,2) -- (15,3) -- (16,3) -- (15,2) -- (16,2) -- (15,1) -- (15,0)   ;

\node at (8.7, 1) {\tiny 1} ;  
\node at (8.7, 2) {\tiny 8} ;  
\node at (8.7, 3) {\tiny 15} ;  
\node at (15.3, 0) {\tiny 0} ;  
\node at (15.3, 1) {\tiny 7} ;  
\node at (16.3, 2) {\tiny 15} ;  
\node at (16.3, 3) {\tiny 22} ;  

\end{tikzpicture}\end{center}
\end{figure}

Case 5: $b+c < x$.  If~$r$ is even, we use~$\mathbf{h_1}$ (which we may regardless of the parity of~$x$, as~$q=0$) and the argument goes through as in Case~1 above with~$r$ even and~$x$ even.    If~$r$ is odd, we use~$\mathbf{h_2}$ (which we may regardless of the parity of~$x$, as~$q=0$) and the argument goes through as in Case~3 above with~$r$ odd and~$x$ even.  
\end{proof}



We can now prove the main result of the section.

\begin{thm}\label{th:1xx+1}
Let $L = \{ 1^a, x^b, (x+1)^c \}$.  There is a standard linear realization for~$L$ whenever $a \geq x+1$. 
\end{thm}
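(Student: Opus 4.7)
The approach is induction on $b+c$. The base of the induction is already handled by Lemmas~\ref{lem:smallb} and~\ref{lem:smallc}: if $b < x$ then Lemma~\ref{lem:smallb} produces a standard linear realization of $L$ under the hypothesis $a \geq x+1$, and symmetrically if $c < x+1$ then Lemma~\ref{lem:smallc} does. The degenerate cases $b = 0$ or $c = 0$ fall under Theorem~\ref{th:omega}, since $\omega(x,b), \omega(x+1,c) \leq x+1 \leq a$.

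For the inductive step I would assume $b \geq x$ and $c \geq x+1$. Applying Lemma~\ref{lem:perf1} with its variable ``$x$'' instantiated to our $x+1$ produces a perfect linear realization of
$$\{\,((x+1)-1)^{(x+1)-1},\,(x+1)^{x+1}\,\} = \{\,x^{x},\,(x+1)^{x+1}\,\}.$$
Set $L' = \{1^{a},\, x^{b-x},\, (x+1)^{c-(x+1)}\}$. The case assumption guarantees that both exponents $b-x$ and $c-(x+1)$ are nonnegative; the number of $1$s is unchanged, so the hypothesis $a \geq x+1$ persists; and the size strictly decreases, since $|L'| = |L| - (2x+1)$. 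By the inductive hypothesis, $L'$ admits a standard linear realization. Concatenating it with the perfect realization of $\{x^x,(x+1)^{x+1}\}$ via Lemma~\ref{lem:concat}---and using the clause that concatenation with a perfect second realization preserves standardness---gives a standard linear realization of $L' \cup \{x^x,(x+1)^{x+1}\} = L$.

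There is no substantial obstacle here: the theorem is essentially the assembly of the preceding lemmas, and the heavy lifting (the $\gamma$-move analyses of Lemmas~\ref{lem:smallb} and~\ref{lem:smallc} and the explicit perfect realizations of Lemma~\ref{lem:perf1}) has already been carried out. The only points that require care are (i) checking that the three cases $b<x$, $c<x+1$, and ``$b \geq x$ and $c \geq x+1$'' exhaust all possibilities, (ii) verifying that the induction strictly decreases $b+c$, and (iii) noting that one must use the first family of perfect realizations in Lemma~\ref{lem:perf1} (with $x+1$ substituted) rather than the second---the second would peel off $\{x^{x+2},(x+1)^{x-1}\}$, which fails at the boundary values $b \in \{x, x+1\}$, while the first exactly matches the regime ``$b \geq x$ and $c \geq x+1$'' of the inductive case.
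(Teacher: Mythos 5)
Your proposal is correct and follows essentially the same route as the paper: both reduce the case $b\geq x$, $c\geq x+1$ to Lemmas~\ref{lem:smallb} and~\ref{lem:smallc} by peeling off copies of the perfect realization of $\{x^x,(x+1)^{x+1}\}$ from Lemma~\ref{lem:perf1} via Lemma~\ref{lem:concat} (the paper removes all $k$ copies at once rather than one per inductive step, which is an immaterial difference).
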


\begin{proof}
The only multisets left unaccounted for by Lemmas~\ref{lem:smallb} and~\ref{lem:smallc} are those with both~$b \geq x$ and $c \geq x+1$. In such a case we may write $$L = \{ x^{kx}, (x+1)^{k(x+1)} \} \cup \{ 1^a, x^{b'}, (x+1)^{c'} \}$$ for some~$k \geq 1$ and either~$b' < x$ or~$c' < x+1$ (or both).    

Build a perfect realization for $\{ x^{kx}, (x+1)^{k(x+1)} \}$ by concatenating~$k$ copies of the perfect realization  $\{ x^{x}, (x+1)^{(x+1)} \}$ obtainable from Lemma~\ref{lem:perf1}. Then, concatenate this with the standard realization of $\{ 1^a, x^{b'}, (x+1)^{c'} \}$ given by the appropriate choice from  Lemma~\ref{lem:smallb} or~\ref{lem:smallc} to give the required standard realization of~$L$.
\end{proof}

Theorem~\ref{th:1xx+1} has an immediate consequence for some multisets with supports of sizes~4 or~5.

\begin{cor}\label{cor:bigsupp}{\rm (Supports of sizes~4 and~5)} 
If~$L = \{ 1^a,  x^b, (x+1)^c ,  y^d \}$  then~$L$ has a linear realization whenever~$a \geq x+y+1$.   If~$L = \{ 1^a,  x^b, (x+1)^c ,  y^d, (y+1)^e \}$ then~$L$ has a linear realization whenever~$a \geq x+y+2$.   
\end{cor}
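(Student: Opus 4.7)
The plan is to deduce both parts of the corollary directly from Theorem~\ref{th:1xx+1}, Theorem~\ref{th:omega}, and the concatenation lemma (Lemma~\ref{lem:concat}). The strategy is to partition the multiset $L$ into two sub-multisets, each of which has a standard linear realization by an already-established result, and then glue them together.

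For the first statement, I would write
\[
L = \{1^{x+1},\, x^b,\, (x+1)^c\} \;\cup\; \{1^{a-x-1},\, y^d\}.
\]
The hypothesis $a \geq x+y+1$ ensures that $a-x-1 \geq y \geq \omega(y,d)$, since Theorem~\ref{th:omega} gives $\omega(y,d) \leq y$. Hence the second piece has a standard linear realization by Theorem~\ref{th:omega}. The first piece has a standard linear realization by Theorem~\ref{th:1xx+1}, since it contains at least $x+1$ ones. Applying Lemma~\ref{lem:concat} to the two standard realizations then yields a linear realization of $L$.

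For the second statement, I would analogously split
\[
L = \{1^{x+1},\, x^b,\, (x+1)^c\} \;\cup\; \{1^{a-x-1},\, y^d,\, (y+1)^e\}.
\]
Now the hypothesis $a \geq x+y+2$ gives $a-x-1 \geq y+1$, so the second piece has a standard linear realization by a second application of Theorem~\ref{th:1xx+1} (with $y$ playing the role of $x$). The first piece again has a standard linear realization by Theorem~\ref{th:1xx+1}, and one concatenation via Lemma~\ref{lem:concat} finishes the argument.

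There is no real obstacle here: the work has already been done in Theorem~\ref{th:1xx+1} and Theorem~\ref{th:omega}, and the only thing to verify is the arithmetic bookkeeping on the number of 1-edges needed in each piece. The one subtlety to check is that in the first statement we do in fact have enough 1-edges both to invoke Theorem~\ref{th:1xx+1} (which needs $\geq x+1$ ones) and to supply $\omega(y,d)$ ones to the second piece; the hypothesis $a \geq x+y+1$ is exactly tight for this split, using $\omega(y,d) \leq y$.
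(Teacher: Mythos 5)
Your proposal is correct and is essentially identical to the paper's own proof: the same decomposition $L = \{1^{x+1}, x^b, (x+1)^c\} \cup \{1^{a-(x+1)}, \ldots\}$, with the first piece handled by Theorem~\ref{th:1xx+1}, the second by Theorem~\ref{th:omega} (respectively Theorem~\ref{th:1xx+1}), and the pieces glued by Lemma~\ref{lem:concat}. The arithmetic check that $a-(x+1) \geq y$ (respectively $y+1$) is exactly the observation the paper makes.
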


\begin{proof}
For the first claim, write~$L =  \{ 1^{x+1}, x^b, (x+1)^c \} \cup \{ 1^{a - (x+1)} , y^d  \}$, noting that $a-(x+1) \geq y$.  To obtain the realization, concatenate the standard realization for $\{ 1^{x+1}, x^b, (x+1)^c \}$ obtained from Theorem~\ref{th:1xx+1} with the standard realization for~$\{ 1^{a - (x+1)} , y^d  \}$ from Theorem~\ref{th:omega}.

For the second claim, write~$L =  \{ 1^{x+1}, x^b, (x+1)^c \} \cup \{ 1^{a - (x+1)} , y^d , (y+1)^e \}$, noting that $a-(x+1) \geq y+1$. 
  To obtain the realization, concatenate the standard realizations for $\{ 1^{x+1}, x^b, (x+1)^c \}$ and $ \{ 1^{a - (x+1)} , y^d , (y+1)^e \}$ obtained from Theorem~\ref{th:1xx+1}.
\end{proof}

There are few results in the literature for multisets with support of size~4 or~5.  There are some results for fixed supports in~\cite{Avila23,OPPS,OPPS2,PP14} and all known results for variable supports follow from one of the constuctions for realizations 
for the multisets of the form $\{ 1^a, x_1^{b_1}, \ldots, x_k^{b_k} : 1 < x_1 < \cdots < x_k \}$ given by these three cases:
\begin{itemize}
\item all $x_i$ even and~$a \geq x_k-1$~\cite{OPPS},
\item $x_i$ even for~$i < k$, $x_k=x_{k-1}+1$ and~$a \geq 3x_k-4$~\cite{OPPS},
\item $a \geq 3x_k - 5 + \sum_{i=1}^k x_i $~\cite{HR09,OPPS2}.
\end{itemize}
The results of Corollary~\ref{cor:bigsupp} therefore considerably extend what is known for supports of size~4 and~5. 

The standard realizations of Theorem~\ref{th:1xx+1} may also be concatenated with other standard realizations from the literature to give further results.  We do not attempt to catalogue them here.

\section{The case $U = \{1,x,y \}$ with $x$ even}\label{sec:evenx}

In this section the primary goal is to prove Theorem~\ref{th:x=2}, which gives a strong result for~$x=2$ that we use in the next section when considering the BHR~Conjecture multisets with support of the form~$\{1,2,y\}$.  Theorem~\ref{th:xeven} then shows one way that the ideas from the proof of Theorem~\ref{th:x=2} generalize to other even~$x$.  

First we need a lemma that gives a slight improvement on  Corollary~\ref{cor:omega_gen} when~$x$ is even and~$y$ is odd.

\begin{lem}\label{lem:omega_eo}{\rm (Improving Corollary~\ref{cor:omega_gen})}
Suppose~$L = \{1^a,x^b,y^c\}$ with~$1<x<y$ and~$x$ even.  Then~$L$ has a linear realization when~$a \geq x+y-2$.  
\end{lem}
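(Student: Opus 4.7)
The plan is to refine the concatenation argument of Corollary~\ref{cor:omega_gen}. Since $x$ is even, Theorem~\ref{th:omega} gives $\omega(x,b)=x-1$, so Lemma~\ref{lem:concat} applied to standard realizations of $\{1^{x-1},x^b\}$ and $\{1^{a-x+1},y^c\}$ produces a linear realization of $L$ whenever $a \geq (x-1)+\omega(y,c)$. By Theorem~\ref{th:omega}, $\omega(y,c) \leq y-1$ in every case \emph{except} when $y$ is odd with $c = q'y+r'$, $q' \geq 1$, and $r'$ odd with $r' > 1$. Outside this one configuration, the bound $a \geq x+y-2$ is immediate, so the task reduces to constructing a realization in this single sub-case.

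For the sub-case, the key observation is that although $\{1^{y-1},y^c\}$ has no \emph{standard} linear realization (since $\omega(y,c)=y$), it does admit a non-standard realization in which $0$ and $1$ are adjacent. I would build one by a zigzag through the fauxsets with respect to $y$: start at the top of $\varphi_0$, descend through $\varphi_0$ to $0$, cross the edge $(0,1)$ into $\varphi_1$, ascend $\varphi_1$ to its top, and continue in alternating directions through $\varphi_2, \varphi_3, \ldots, \varphi_{y-1}$. In this sub-case $v = y+c = (q'+1)y+r'$ with $r'$ odd, so a direct parity check shows that the critical bridge between $\varphi_{r'-1}$ and $\varphi_{r'}$ occurs at the bottom of $\varphi_{r'-1}$ (since $r'-1$ is even) and is valid; the other bridges follow the same bottom/top alternation and are straightforward. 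The resulting path uses exactly $y-1$ $1$-edges and contains the edge $(0,1)$.

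With this non-standard realization $L$ in hand, I would invoke Lemma~\ref{lem:hrjoin} (insertion between $0$ and $1$) with $M$ a standard realization of $\{1^{x-1},x^b\}$ terminating at vertex~$1$. When $b \bmod x$ is odd, $\mathbf{h}_2$ of Theorem~\ref{th:omega} naturally ends at~$1$ (since $x$ is even and $\Psi_1$ is traversed downward to its bottom); when $b \bmod x$ is even, a zigzag variant starting at the top of $\varphi_1$, descending through $\varphi_1$ to $1$ and using $(1,0)$ to enter $\varphi_0$, supplies the required standard realization ending at~$1$, with the sub-case $x \mid b$ covered by the perfect realization of Lemma~\ref{lem:perf1}. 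Lemma~\ref{lem:hrjoin} then delivers a linear realization of $(L \cup M) \setminus \{1\} = \{1^{x+y-2}, x^b, y^c\}$, and Lemma~\ref{lem:ones} appends $1$-edges to cover all $a > x+y-2$.

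The main obstacle is the parity-sensitive verification of the zigzag for $L$ and of the modified constructions for $M$ across the sub-cases of $b \bmod x$, together with the handling of degenerate geometric situations in which a fauxset top coincides with label $v-1$ (forcing a local re-routing of the final bridge). This bookkeeping parallels the parity case-split already carried out in the proof of Theorem~\ref{th:omega}, and the overall strategy is precisely the ``small variation of the construction'' foreshadowed in the remark following Corollary~\ref{cor:omega_gen}.
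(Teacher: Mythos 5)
Your reduction to the hard sub-case ($y$ odd, $a=x+y-2$, $c=q''y+r''$ with $q''\geq 1$ and $r''$ odd, $r''>1$) is correct and matches the paper, and your zigzag realization of $\{1^{y-1},y^c\}$ containing the edge $(0,1)$ does check out. The assembly via Lemma~\ref{lem:hrjoin} is where the proof breaks. First, an off-by-one: with $L=\{1^{y-1},y^c\}$ and $M=\{1^{x-1},x^b\}$ you get $(L\cup M)\setminus\{1\}=\{1^{x+y-3},x^b,y^c\}$, not $\{1^{x+y-2},x^b,y^c\}$. That is a different multiset (different $v$), and the output of Lemma~\ref{lem:hrjoin} here has end-vertices $r''-1$ and $c$, so it is not standard and Lemma~\ref{lem:ones} cannot append the missing $1$-edge. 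Since Corollary~\ref{cor:omega_gen} already covers $a\geq x+y-1$, the single value $a=x+y-2$ is exactly what you must hit, and you miss it. (The paper's own $b=1$ argument avoids this by taking the $y$-part to be the complement of $\mathbf{h_3}$, a realization of $\{1^{y},y^c\}$ with $0$ and $1$ adjacent, which exists precisely in this sub-case.)

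Second, and more seriously, for even $b$ with $b\geq x$ your $M$ does not exist. A linear realization of $\{1^{x-1},x^b\}$ with end-vertices $0$ and $1$ has the minimum number $x-1$ of bridges, so each fauxset is traversed as a single consecutive run; since $0$ and $1$ are the bottoms of $\varphi_0$ and $\varphi_1$ and must be path-ends, the fauxset order is forced to be $\varphi_0,\varphi_{x-1},\ldots,\varphi_1$, i.e.\ the path must be $\mathbf{h_2}$, and the analysis in Theorem~\ref{th:omega} shows $\mathbf{h_2}$ fails for even $x$ exactly when $b\geq x$ and $b$ is even. Your proposed ``zigzag variant starting at the top of $\varphi_1$'' has neither $0$ nor $1$ as an end-vertex, so it is not a standard realization ending at $1$, and Lemma~\ref{lem:perf1} realizes $\{(x-1)^{x-1},x^x\}$, not $\{1^{x-1},x^b\}$, so the appeal to it when $x\mid b$ does not apply. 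The paper handles all $b>1$ by a genuinely different device: it constructs standard realizations $\mathbf{h_5}$, $\mathbf{h_6}$, $\mathbf{h_7}$ of $\{1^{y-2},x^2,y^c\}$ (or close variants), absorbing two $x$-edges as horizontal jumps inside the width-$y$ grid, and then concatenates with a standard realization of $\{1^{x-1},x^{b-2}\}$ via Lemma~\ref{lem:concat}. You would need a construction of that kind, or some other way to place the $x$-edges, to cover even $b$.
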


\begin{proof}
The case~$y=x+1$  is covered by Theorem~\ref{th:1xx+1}, so assume~$y > x+1$.  
Let~$b = q'x + r'$ and $c = q''y + r''$ with $0 \leq r' < x$ and $0 \leq r'' < y$.  

If~$y$ is even or~$a \geq x+y-1$ then the result follows from Corollary~\ref{cor:omega_gen}, so assume that~$y$ is odd and~$a = x+y-2$.  Further, if~$r''$ is even, $q''=0$ or~$r''=1$ then Theorem~\ref{th:omega} gives a standard realizations for $\{1^{x-1}, x^b\}$ and $\{1^{y-1}, y^c\}$ which may be concatenated to give a linear realization for~$L$, so assume that~$r''$ is odd and not~1 and that~$q''>0$.

Suppose~$b=1$ (in fact this construction works in exactly the same way for all odd~$b$, but we do not require that for the proof).   Then also~$r'$ is odd.  The linear realization~$\mathbf{h_2}$ for~$\{1^{x-1}, x \}$ has end-labels~0 and~1.    The linear realization~$\mathbf{h_3}$ for~$\{1^y, y^c \}$ includes an edge between~$v-1$ and~$v$, and hence its complement is a linear realization of the same multiset that includes an edge between~0 and~1.  Applying Lemma~\ref{lem:hrjoin} we acquire a linear realization for 
$$\left( \{1^{x-1}, x \} \cup  \{1^y, y^c \}  \right) \setminus \{1 \}  = \{1^{x+y-2}, x, y^c \} =L. $$

Now suppose that~$b>1$.  The primary approach is to construct a standard linear realization for~$\{ 1^{y-2}, x^2, y^c \}$ and use this in conjunction with Theorem~\ref{th:omega} and Lemma~\ref{lem:concat} to produce the required linear realization.  However, we deal with two subcases that each require a slightly different construction first: $r'' = y-2$ and $x=2$.

If~$r'' = y-2$, then
$$\mathbf{h_5} =  \left( \apph{}{i=0}{y-x-2} \Psi_i \right)  \uplus^x \ \Psi_{y-2} \uplus \Psi_{y-1} \uplus^x  \left( \apph{}{i=y-3}{y-x-2} \Psi_i \right),  $$
where we start from~0, is a standard linear realization for~$\{1^{y-3}, x^2, y^c \}$.  This is illustrated in the first diagram of Figure~\ref{fig:eo1}.  As $\omega(x,b-2)=x-1$, Theorem~\ref{th:omega} and Lemmas~\ref{lem:concat} and~\ref{lem:ones} give a linear realization for~$\{ 1^a, x^b, y^c \}$ for all~$a \geq x+y-4$ in this subcase; more than sufficient for our needs here. 

If~$x=2$ and~$r'' \neq y-2$ then
$$\mathbf{h_6} =   \left( \apph{}{i=0}{y-3} \Psi_i \right) \uplus^2 \  \Psi_{y-1}^{(q^*)} \uplus \Psi_{y-2} \uplus \Psi_{y-1}^{(0,q^*-1)},   $$
where we start from~0, is a standard linear realization for~$\{1^{y-1}, x, y^c \}$.  This is illustrated in the second diagram of Figure~\ref{fig:eo1}. As $\omega(x,b-1)=x-1$, Theorem~\ref{th:omega} and Lemma~\ref{lem:concat} give a linear realization for~$\{ 1^{x+y-2}, x^b, y^c \}$ in this subcase.

\begin{figure}[tp]
\caption{The standard linear realizations $\mathbf{h_5}$ and~$\mathbf{h_6}$ for~$\{ 1^6, 4^2, 9^{25} \}$ and~$\{ 1^8, 2, 9^{21} \}$ respectively.}\label{fig:eo1}
\begin{center}
\begin{tikzpicture}[scale=0.82, every node/.style={transform shape}]

\fill (0,1) circle (2pt) ;
\fill (0,2) circle (2pt) ;
\fill (0,3) circle (2pt) ;
\fill (0,4) circle (2pt) ;

\fill (1,1) circle (2pt) ;
\fill (1,2) circle (2pt) ;
\fill (1,3) circle (2pt) ;
\fill (1,4) circle (2pt) ;

\fill (2,1) circle (2pt) ;
\fill (2,2) circle (2pt) ;
\fill (2,3) circle (2pt) ;
\fill (2,4) circle (2pt) ;

\fill (3,1) circle (2pt) ;
\fill (3,2) circle (2pt) ;
\fill (3,3) circle (2pt) ;
\fill (3,4) circle (2pt) ;

\fill (4,1) circle (2pt) ;
\fill (4,2) circle (2pt) ;
\fill (4,3) circle (2pt) ;
\fill (4,4) circle (2pt) ;

\fill (5,1) circle (2pt) ;
\fill (5,2) circle (2pt) ;
\fill (5,3) circle (2pt) ;
\fill (5,4) circle (2pt) ;

\fill (6,1) circle (2pt) ;
\fill (6,2) circle (2pt) ;
\fill (6,3) circle (2pt) ;
\fill (6,4) circle (2pt) ;

\fill (7,1) circle (2pt) ;
\fill (7,2) circle (2pt) ;
\fill (7,3) circle (2pt) ;

\fill (8,1) circle (2pt) ;
\fill (8,2) circle (2pt) ;
\fill (8,3) circle (2pt) ;

\draw (0,1) -- (0,4) -- (1,4) -- (1,1) -- (2,1) -- (2,4) -- (3,4) -- (3,1) ;
\draw (4,1) -- (4,4) -- (5,4) -- (5,1) -- (6,1) -- (6,4) ;
\draw (7,1) -- (7,3) -- (8,3) -- (8,1) ;

\draw  plot [smooth] coordinates {(3,1) (5,0.5) (7,1)};
\draw  plot [smooth] coordinates {(4,1) (6,0.5) (8,1)};

\node at (-0.3, 1) {\tiny 0} ;  
\node at (-0.3, 2) {\tiny 9} ;  
\node at (-0.3, 4) {\tiny 27} ;  
\node at (6.3, 4) {\tiny 33} ;  
\node at (8.3, 1) {\tiny 8} ;  
\node at (8.3, 3) {\tiny 26} ;  
\node at (2.7, 1) {\tiny 3} ;  
\node at (3.7, 1) {\tiny 4} ;  
\node at (7.3, 1) {\tiny 7} ;  

\fill (10,1) circle (2pt) ;
\fill (10,2) circle (2pt) ;
\fill (10,3) circle (2pt) ;
\fill (10,4) circle (2pt) ;

\fill (11,1) circle (2pt) ;
\fill (11,2) circle (2pt) ;
\fill (11,3) circle (2pt) ;
\fill (11,4) circle (2pt) ;

\fill (12,1) circle (2pt) ;
\fill (12,2) circle (2pt) ;
\fill (12,3) circle (2pt) ;
\fill (12,4) circle (2pt) ;

\fill (13,1) circle (2pt) ;
\fill (13,2) circle (2pt) ;
\fill (13,3) circle (2pt) ;
\fill (13,4) circle (2pt) ;

\fill (14,1) circle (2pt) ;
\fill (14,2) circle (2pt) ;
\fill (14,3) circle (2pt) ;

\fill (15,1) circle (2pt) ;
\fill (15,2) circle (2pt) ;
\fill (15,3) circle (2pt) ;

\fill (16,1) circle (2pt) ;
\fill (16,2) circle (2pt) ;
\fill (16,3) circle (2pt) ;

\fill (17,1) circle (2pt) ;
\fill (17,2) circle (2pt) ;
\fill (17,3) circle (2pt) ;

\fill (18,1) circle (2pt) ;
\fill (18,2) circle (2pt) ;
\fill (18,3) circle (2pt) ;

\draw (10,1) -- (10,4) -- (11,4) -- (11,1) -- (12,1) -- (12,4) -- (13,4) -- (13,1) 
 -- (14,1) -- (14,3) -- (15,3) -- (15,1) -- (16,1) -- (16,3) ;
\draw (18,3) -- (17,3) -- (17,1) -- (18,1) -- (18,2) ;

\draw  plot [smooth] coordinates {(16,3) (17,3.5) (18,3)};

\node at (9.7, 1) {\tiny 0} ;  
\node at (9.7, 2) {\tiny 9} ;  
\node at (9.7, 4) {\tiny 27} ;  
\node at (13.3, 4) {\tiny 30} ;  
\node at (18.3, 1) {\tiny 8} ;  
\node at (18.3, 2) {\tiny 17} ;  
\node at (18.3, 3) {\tiny 26} ;  
\node at (15.7, 3) {\tiny 24} ;  

\end{tikzpicture}
\end{center}
\end{figure}
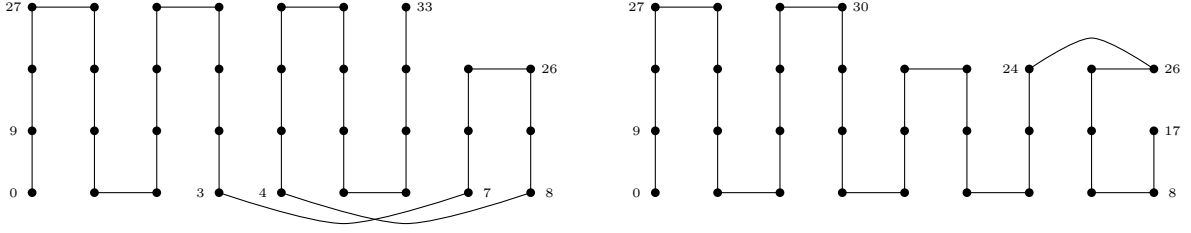

Finally, assume that $r'' \neq y-2$ and~$x \neq 2$.  Similarly to the $\omega$-construction~$\mathbf{h_4}$, the realization we use has two slightly different forms, depending on whether we need to use a single edge of one fauxset to successfully move between the fauxsets of differing sizes.  If~$r'' \neq y-4$ then let
$$\mathbf{h_7}  = \left( \apph{}{i=0}{y-x-2} \Psi_i \right) \uplus^x \ \Psi_{y-2} \uplus \Psi_{y-1} \uplus^x  \left( \apph{}{i=y-5}{y-x-2} \Psi_i \right) 
      \uplus \Psi_{y-4}^{(q^*)} \uplus \Psi_{y-3} \uplus \Psi_{y-2}^{(0,q^*-1)}  ;$$
 if $r'' = y-4$ then let
$$\mathbf{h_7} = \left( \apph{}{i=0}{y-x-2} \Psi_i \right) \uplus^x \ \Psi_{y-2} \uplus \Psi_{y-1} \uplus^x  \left( \apph{}{i=y-5}{y-x-2} \Psi_i \right) 
      \uplus \Psi_{y-4}^{(q^*-1,q^*)} \uplus \Psi_{y-3} \uplus \Psi_{y-2}^{(0,q^*-2)}  ,$$
where both versions start at~0.  Then~$\mathbf{h_7}$ is a standard linear realization of~$\{1^{y-2}, x^2, y^c \}$. 
The two variants of~$\mathbf{h_7}$ are illustrated in Figure~\ref{fig:eo2}.  As $\omega(x,b-2)=x-1$, Theorem~\ref{th:omega} and Lemmas~\ref{lem:concat} and~\ref{lem:ones} give a linear realization for~$\{ 1^a, x^b, y^c \}$ for all~$a \geq x+y-3$, which includes the required value of~$x+y-2$.
\end{proof}

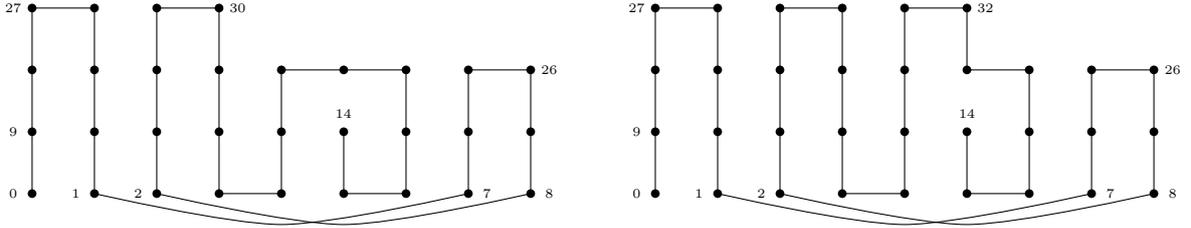
\begin{figure}[tp]
\caption{ The standard linear realization $\mathbf{h_7}$  for~$\{ 1^7, 6^2, 9^{21} \}$ and for~$\{ 1^7, 6^2, 9^{23} \}$.}\label{fig:eo2}

\begin{center}
\begin{tikzpicture}[scale=0.82, every node/.style={transform shape}]

\fill (0,1) circle (2pt) ;
\fill (0,2) circle (2pt) ;
\fill (0,3) circle (2pt) ;
\fill (0,4) circle (2pt) ;

\fill (1,1) circle (2pt) ;
\fill (1,2) circle (2pt) ;
\fill (1,3) circle (2pt) ;
\fill (1,4) circle (2pt) ;

\fill (2,1) circle (2pt) ;
\fill (2,2) circle (2pt) ;
\fill (2,3) circle (2pt) ;
\fill (2,4) circle (2pt) ;

\fill (3,1) circle (2pt) ;
\fill (3,2) circle (2pt) ;
\fill (3,3) circle (2pt) ;
\fill (3,4) circle (2pt) ;

\fill (4,1) circle (2pt) ;
\fill (4,2) circle (2pt) ;
\fill (4,3) circle (2pt) ;

\fill (5,1) circle (2pt) ;
\fill (5,2) circle (2pt) ;
\fill (5,3) circle (2pt) ;

\fill (6,1) circle (2pt) ;
\fill (6,2) circle (2pt) ;
\fill (6,3) circle (2pt) ;

\fill (7,1) circle (2pt) ;
\fill (7,2) circle (2pt) ;
\fill (7,3) circle (2pt) ;

\fill (8,1) circle (2pt) ;
\fill (8,2) circle (2pt) ;
\fill (8,3) circle (2pt) ;

\draw (0,1) -- (0,4) -- (1,4) -- (1,1) ;
\draw (2,1) -- (2,4) -- (3,4) -- (3,1) -- (4,1) -- (4,3) -- (6,3) 
  -- (6,1) -- (5,1) -- (5,2) ;
\draw (7,1) -- (7,3) -- (8,3) -- (8,1) ;

\draw  plot [smooth] coordinates {(1,1) (4,0.5) (7,1)};
\draw  plot [smooth] coordinates {(2,1) (5,0.5) (8,1)};

\node at (-0.3, 1) {\tiny 0} ;  
\node at (-0.3, 2) {\tiny 9} ;  
\node at (-0.3, 4) {\tiny 27} ;  
\node at (3.3, 4) {\tiny 30} ;  
\node at (8.3, 1) {\tiny 8} ;  
\node at (8.3, 3) {\tiny 26} ;  
\node at (0.7, 1) {\tiny 1} ;  
\node at (1.7, 1) {\tiny 2} ;  
\node at (7.3, 1) {\tiny 7} ;  
\node at (5, 2.3) {\tiny 14} ;  

\fill (10,1) circle (2pt) ;
\fill (10,2) circle (2pt) ;
\fill (10,3) circle (2pt) ;
\fill (10,4) circle (2pt) ;

\fill (11,1) circle (2pt) ;
\fill (11,2) circle (2pt) ;
\fill (11,3) circle (2pt) ;
\fill (11,4) circle (2pt) ;

\fill (12,1) circle (2pt) ;
\fill (12,2) circle (2pt) ;
\fill (12,3) circle (2pt) ;
\fill (12,4) circle (2pt) ;

\fill (13,1) circle (2pt) ;
\fill (13,2) circle (2pt) ;
\fill (13,3) circle (2pt) ;
\fill (13,4) circle (2pt) ;

\fill (14,1) circle (2pt) ;
\fill (14,2) circle (2pt) ;
\fill (14,3) circle (2pt) ;
\fill (14,4) circle (2pt) ;

\fill (15,1) circle (2pt) ;
\fill (15,2) circle (2pt) ;
\fill (15,3) circle (2pt) ;
\fill (15,4) circle (2pt) ;

\fill (16,1) circle (2pt) ;
\fill (16,2) circle (2pt) ;
\fill (16,3) circle (2pt) ;

\fill (17,1) circle (2pt) ;
\fill (17,2) circle (2pt) ;
\fill (17,3) circle (2pt) ;

\fill (18,1) circle (2pt) ;
\fill (18,2) circle (2pt) ;
\fill (18,3) circle (2pt) ;

\draw (10,1) -- (10,4) -- (11,4) -- (11,1) ;
\draw (12,1) -- (12,4) -- (13,4) -- (13,1) -- (14,1) -- (14,4) -- (15,4) 
  -- (15,3) -- (16,3) -- (16,1) -- (15,1) -- (15,2) ;
\draw (17,1) -- (17,3) -- (18,3) -- (18,1) ;

\draw  plot [smooth] coordinates {(11,1) (14,0.5) (17,1)};
\draw  plot [smooth] coordinates {(12,1) (15,0.5) (18,1)};

\node at (9.7, 1) {\tiny 0} ;  
\node at (9.7, 2) {\tiny 9} ;  
\node at (9.7, 4) {\tiny 27} ;  
\node at (15.3, 4) {\tiny 32} ;  
\node at (18.3, 1) {\tiny 8} ;  
\node at (18.3, 3) {\tiny 26} ;  
\node at (10.7, 1) {\tiny 1} ;  
\node at (11.7, 1) {\tiny 2} ;  
\node at (17.3, 1) {\tiny 7} ;  
\node at (15, 2.3) {\tiny 14} ;  

\end{tikzpicture}
\end{center}
\end{figure}

We can now prove the main result for~$x=2$.

\begin{thm}\label{th:x=2}{\rm (When $x=2$)}
Let $L = \{1^a, 2^b, y^c \}$, where $y \geq 5$ and $a+b \geq y-1$.  Then~$L$ has a linear realization when $a \geq 3$. 
\end{thm}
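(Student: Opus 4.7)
The plan is to combine Lemma~\ref{lem:omega_eo} (for $a \geq y$) with the bridging lemmas~\ref{lem:2rect} and~\ref{lem:2squash} (for base cases with $a+b = y-1$), using the concatenation and insertion lemmas from Section~\ref{sec:dtt} to reach all remaining parameter values, and filling in the stubborn residue classes with direct grid-based constructions.

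First, Lemma~\ref{lem:omega_eo} applied with $x = 2$ gives the result whenever $a \geq y$, so I assume $3 \leq a \leq y-1$. For such $a$, using Lemma~\ref{lem:concat} with a standard realization of $\{1, 2^t\}$ (provided by Theorem~\ref{th:omega}, since $\omega(2,t) = 1$) and Lemma~\ref{lem:ones} (for extra $1$-edges), a standard linear realization of a \emph{diagonal base} $(a_0, y-1-a_0, c)$ for some $a_0 \in \{3, \ldots, y-1\}$ extends to every $(a,b,c)$ with $a \geq a_0 + 1$ and $b \geq y-1-a_0$, as well as $(a_0+s, y-1-a_0, c)$ for each $s \geq 0$. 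Varying $a_0$, the only triples left uncovered by such extensions are the diagonal bases themselves and the single family $(3,b,c)$ with $b > y-4$.

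For each diagonal base, I would apply Lemma~\ref{lem:2rect} with $L_1 = \{1, 2^{v_1-2}\}$ and $L_2 = \{1^{a_0-1}, 2^{v_2-a_0}\}$ (each having a standard realization by Theorem~\ref{th:omega}), choosing $v_1 \in \{2, \ldots, y-2\}$ so that $v_1 + 1 \equiv c \pmod y$; this handles every residue $c \bmod y$ that lies in $\{3, \ldots, y-1\}$. Lemma~\ref{lem:2squash} applied with an analogous split covers $c \equiv 2 \pmod y$. For the remaining residues $c \equiv 0, 1 \pmod y$, I would give a direct construction on a width-$y$ grid (a full rectangle when $c \equiv 0$, with a single-vertex partial row when $c \equiv 1$): use every available vertical edge, so that the column-visitation order traces a Hamiltonian path on $\{0, 1, \ldots, y-1\}$ starting at~$0$ and using exactly $a_0$ unit-steps and $y-1-a_0$ length-$2$ steps --- such a path exists for every $y \geq 5$ by a straightforward zigzag.

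Finally, to handle the residual family $(3,b,c)$ with $b > y-4$, I would invoke Lemma~\ref{lem:hrjoin}: the base realization from Lemma~\ref{lem:2rect} explicitly contains the edge $0$--$1$, and $\{1, 2^t\}$ admits for every $t \geq 0$ a standard realization ending at vertex~$1$ (via $[0, 2, 4, \ldots, 2k, 2k-1, \ldots, 1]$ or its even-length analogue). Inserting this $M$ between~$0$ and~$1$ of the base bumps $b$ to $y-4+t$ without touching $a$ or $c$. For the residues where only Lemma~\ref{lem:2squash} or the direct grid construction applies, one arranges the construction so that $0$ and~$1$ remain adjacent, permitting the same insertion trick. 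The principal obstacle is precisely the residues $c \equiv 0, 1 \pmod y$, where neither bridging lemma suffices and the explicit grid construction must realize exactly the prescribed horizontal-step multiset while also guaranteeing $0$--$1$ adjacency so that the insertion trick extends it to all $b \geq y-4$.
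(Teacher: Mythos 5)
Your overall architecture matches the paper's: dispose of $a \geq y$ via Lemma~\ref{lem:omega_eo}, build a base realization with $b = y-1-a$ via Lemma~\ref{lem:2rect} (with a one-sided variant for $c \equiv 0,1 \pmod y$), and use Lemma~\ref{lem:hrjoin} to insert $\{1,2^{b_1}\}$ between the guaranteed adjacent vertices~$0$ and~$1$. However, there is a genuine gap in the step that is supposed to cover most of the parameter space. Your extension from a ``diagonal base'' $(a_0, y-1-a_0, c)$ to larger $a$ and $b$ relies on Lemma~\ref{lem:concat}, which requires a \emph{standard} linear realization of the base. Lemma~\ref{lem:2rect} does not produce standard realizations: its output starts at an end of the fauxset $\varphi_{x-h_{v_2}}$ and ends at an end of $\varphi_{g_{v_1}+1}$, neither of which is the vertex~$0$ in general (in the first diagram of Figure~\ref{fig:129} the endpoints are $8$ and $29$). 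So the concatenation step, as written, cannot be applied, and with it falls the coverage of every target with $a \geq 4$ and $b > y-1-a$. The fix is to abandon concatenation entirely and do for all $a$ what you already do for the family $(3,b,c)$: Lemma~\ref{lem:2rect} always supplies the edge between $0$ and $1$, so Lemma~\ref{lem:hrjoin} raises $b$ from $y-1-a$ to any larger value without needing standardness. This is precisely the paper's route.

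A second, smaller defect: your rigid split $L_1 = \{1, 2^{v_1-2}\}$, $L_2 = \{1^{a_0-1}, 2^{v_2-a_0}\}$ breaks down when $v_2 < a_0$, i.e.\ when $r'' = c \bmod y$ exceeds $y - a_0 + 1$, since $L_2$ would then contain a negative number of $2$-edges (e.g.\ $y=9$, $r''=8$, $a_0 = 3$ gives $v_2 = 2$ and $L_2 = \{1^2, 2^{-1}\}$). The $a-1$ interior $1$-edges must be distributed flexibly as $a_1 + a_2 = a-1$ with $1 \leq a_i \leq v_i - 1$, which is always possible since $2 \leq a-1 \leq y-2 = (v_1 - 1) + (v_2 - 1)$; this also absorbs the case $r''=2$ by taking $a_1 = 0$ and $L_1 = \emptyset$ inside Lemma~\ref{lem:2rect}, avoiding your detour through Lemma~\ref{lem:2squash}, which does not guarantee the $0$--$1$ adjacency you later need and which you only wave at (``one arranges the construction so that $0$ and $1$ remain adjacent''). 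Finally, in the direct construction for $c \equiv 0,1 \pmod y$, you should specify that the path starts at the \emph{top} of column~$0$ and descends, so that the hop to column~$1$ is the edge between vertices $0$ and $1$; starting at vertex~$0$ and ascending would place that hop at the top of the grid and destroy the adjacency needed for the insertion.
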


\begin{proof}
If~$a \geq y$ then the result follows from Lemma~\ref{lem:omega_eo}, so assume~$a < y$.  Let $c = q''y + r''$ with $0 \leq r'' < y$.

The main construction is via Lemma~\ref{lem:2rect}.  In order to use that result, write
$$\{ 1^a,2^b,y^c \} = \left( \{1, 2^{b_1} \} \cup \{1^a , 2^{b_2} , y^c \} \right) \setminus \{ 1 \}$$
where $b_2 = y-1 -a$ and~$b_1 = b -  b_2$.

The multiset $\{1, 2^{b_1} \}$ has a linear realization by Theorem~\ref{th:omega}; moreover the realization constructed in that proof has end-points~0 and~1.  

Suppose~$r'' > 1$.   Set~$v_1 = r''-1$ and~$v_2 = y - r'' +1$.
Choose~$a_1$ and~$a_2$, with~$a_1 + a_2 = a-1$, such that~$1 \leq a_1 \leq v_1 - 1$ and $1 \leq a_2 \leq v_2 - 1$, with the exception that~$a_1 = 0$ if~$r''=2$.  By Theorem~\ref{th:omega} and~Lemma~\ref{lem:ones} there is a standard realization for each of $\{ 1^{a_1}, 2^{r'' - 2 - a_1} \}$ and $\{ 1^{a_2} , 2^{y - r'' - a_2} \} $.  Applying Lemma~\ref{lem:2rect} and recalling that~$b_2 = y-1-a$ we obtain the linear realization for 
$$\{ 1^{a_1}, 2^{r'' - 2 - a_1} \} \cup \{ 1^{a_2} , 2^{y - r'' - a_2} \} \cup \{1 , y^c \} = \{1^a, 2^{b_2}, y^c \}$$
when $c \not\equiv 0,1 \pmod{y}$.  Applying~Lemma~\ref{lem:hrjoin} gives the result.

Now suppose~$r'' \in \{0,1 \}$.  We construct a linear realization using the same basic method as Lemma~\ref{lem:2rect} but requiring only ``one side" of the construction.  In the notation of that lemma, consider the following path, starting at the largest vertex of~$\Psi_0$:
$$\Psi_0 \ \uplus \  \left(  \apph{\mathbf{d}}{i=g_1+1}{g_{y-1}+1} \Psi_i  \right),$$
where~$\mathbf{d}$ is the sequence of differences given by a standard linear realization~$[ g_1, \ldots, g_{y-1}]$ for~$\{1^{a-1}, 2^{y-1-a} \}$.   This is a linear realization for~$\{1^a, 2^{y-1-a}, y^c \}$ and there is an edge between the vertices~0 and~1 where~$\Psi_0$ connects to~$\Psi_1$.  We again apply Lemma~\ref{lem:hrjoin} to obtain the result.
\end{proof}

Note that the construction in the proof of Theorem~\ref{th:x=2} is slightly stronger than required by the statement when~$c \equiv 0,1,2 \pmod{y}$.  In these cases we can relax the condition on~$a$ to~$a \geq 2$.

Similar ideas extend to arbitrary even~$x$.  This occurs straightforwardly when~$a+b+y$ is even; a new twist is needed when it is odd.  We require some realizations with a specified final edge length.   These are obtainable from previously presented methods; we collect these before moving on to the theorem.

\begin{lem}\label{lem:finaledge}{\rm (Final edge)} 
Let~$x \geq 2$ and~$L = \{ 1^a, x^b \}$.  For~$a=x-1$ and~$b \geq 2$, there is a standard realization of~$L$ whose final edge is an~$x$-edge.
For~$a = x$ and~$b \leq x$, there is a standard linear realization for~$L$ whose final edge is a $1$-edge.
\end{lem}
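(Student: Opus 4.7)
The plan is to lean on the $\omega$-constructions of Theorem~\ref{th:omega} and perform small local adjustments at the tail when the default construction ends with the wrong edge type.

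For the first claim, if a standard realization of $\{1^{x-1}, x^b\}$ exists at all then $\omega(x,b) \leq x-1$, so one of $\mathbf{h_1}$ or $\mathbf{h_2}$ is available.  Both end by traversing a single fauxset fully as a chain of $x$-edges: $\mathbf{h_1}$ finishes inside $\Psi_{x-1}$ and $\mathbf{h_2}$ finishes inside $\Psi_1$.  The task then reduces to checking that the chosen construction's final fauxset contains at least two labels whenever $b \geq 2$, so that its concluding internal edge is an $x$-edge.  I would run through the four subcases of Theorem~\ref{th:omega}; the only subtlety arises when $q' = 0$, where $\Psi_{x-1}$ in $\mathbf{h_1}$ degenerates to a singleton, but there $\mathbf{h_2}$ still applies and its terminal fauxset $\Psi_1 = \{1, x+1\}$ guarantees the final edge has length $x$.

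For the second claim the strategy is to start from the $\mathbf{h_2}$ standard realization of the adjacent multiset $\{1^{x-1}, x^{b+1}\}$---which exists throughout $b \leq x$ since $b+1 \leq x+1$ puts it in a favorable subcase of Theorem~\ref{th:omega}---and perform a tail-curl-style modification that trades one $x$-edge for one $1$-edge.  Concretely, for $b \geq 2$ both of the final fauxsets $\Psi_2$ and $\Psi_1$ are doublets, and the alternating traversal directions force the last four vertices of $\mathbf{h_2}$ into one of the two patterns $\ldots, 2, x+2, x+1, 1$ or $\ldots, x+2, 2, 1, x+1$.  Reversing the last three vertices in either case produces a valid Hamiltonian path whose closing edge has length $1$ and whose realized multiset becomes $\{1^x, x^b\}$.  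The two small values $b = 0$ and $b = 1$ fall outside this tail modification (in the source $\mathbf{h_2}$ the fauxset $\Psi_2$ would be a singleton), so I would dispose of them by the explicit paths $[0, 1, 2, \ldots, x]$ and $[0, 1, x+1, x, x-1, \ldots, 2]$ respectively.

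The hard part is verifying that the tail of $\mathbf{h_2}$ for $\{1^{x-1}, x^{b+1}\}$ genuinely takes one of the two claimed forms and that both $\Psi_2$ and $\Psi_1$ are doublets throughout $2 \leq b \leq x$.  Both facts follow from a short case analysis on the pair $(q, r)$ with $v' = (b+1) + x$---either $q = 1$ and $r = b+1 \geq 3$, or $q = 2$ with $r \in \{0, 1\}$---together with the parity-driven alternation of traversal directions already worked out in the proof of Theorem~\ref{th:omega}.  No genuinely new combinatorial idea is needed beyond the reversal of a three-vertex tail.
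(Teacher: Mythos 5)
Your argument for the first claim is essentially the paper's: both observe that whichever of $\mathbf{h_1}$ or $\mathbf{h_2}$ succeeds for $\{1^{x-1},x^b\}$ terminates inside a fauxset containing at least two labels once $b\geq 2$, so the final edge has length $x$ (and both share the implicit assumption that such a realization exists at all, which by Theorem~\ref{th:omega} can fail for odd~$x$; in the paper's only application $x$ is even, so nothing is lost). For the second claim, however, you take a genuinely different route. The paper splits on~$b$: for $b=1$ or even $b<x$ it prepends a $1$-edge to an $\omega$-construction of $\{1^{x-1},x^b\}$ that already ends in a $1$-edge, for odd $b>1$ it uses $\mathbf{h_3}$, and for even $b=x$ it introduces the new variant $\mathbf{h'_3}$. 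You instead start one step up at $\{1^{x-1},x^{b+1}\}$, take its $\mathbf{h_2}$ realization, and reverse the three-vertex tail; this changes only the single edge joining the tail to the rest of the path, trading one $x$-edge for one $1$-edge and leaving a terminal $1$-edge, so the result is a standard realization of $\{1^x,x^b\}$ as required. I checked the supporting facts: $\mathbf{h_2}$ is indeed the successful construction for $\{1^{x-1},x^{b+1}\}$ throughout $2\leq b\leq x$ (the $(q,r)=(2,0)$ subcase at $b=x-1$ needs a one-line verification beyond the literal case analysis in the proof of Theorem~\ref{th:omega}, since that proof does not address $\mathbf{h_2}$ at $r=0$, but the first hop from $x$ to $x-1$ is valid there); the last four vertices are $2,x+2,x+1,1$ or $x+2,2,1,x+1$ as you claim; and your explicit paths dispose of $b=0,1$. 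The only cosmetic wrinkle is $x=2$, where ``$\Psi_2$'' is really part of $\varphi_0$ rather than a separate doublet fauxset, but the tail $[2,4,3,1]$ of $[0,2,4,3,1]$ still matches your first pattern and the reversal works. Your approach buys uniformity --- one local move covers all of $2\leq b\leq x$ and renders $\mathbf{h'_3}$ unnecessary --- at the cost of the extra bookkeeping needed to confirm that $\mathbf{h_2}$ exists and has the claimed tail in each $(q,r)$ regime.
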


\begin{proof}  Let~$a= x-1$.
If $2 \leq b \leq x-1$ then the final edge of the linear realization~$\mathbf{h_2}$ of~$\{1^{x-1},x^b\}$ is an~$x$-edge (in the notation of that result, as $q'=0$ we can always hop from~$\varphi_r$ to~$\varphi_{r-1}$, so we are not concerned with the parity of~$x-r$).  If~$b > x-1$, then whichever linear realization from the proof of Theorem~\ref{th:omega}  of~$\{1^{x-1},x^b\}$ is successful---either~$\mathbf{h_1}$ or~$\mathbf{h_2}$ (possibly both)---has an~$x$-edge as a final edge. 

Now let~$a=x$ and~$b \leq x$.  When~$b=0$ we have the perfect realization~$[0,1,\ldots, x-1]$ whose final edge is a~1-edge (as are all of the others).   
When~$0 < b < x$ and~$b=1$ or~$b$ is even, then the last edge of the successful $\omega$-construction from Theorem~\ref{th:omega} for~$\{1^{x-1}, x^b\}$, again either~$\mathbf{h_1}$ or~$\mathbf{h_2}$, is a $1$-edge.  Obtain the required realization by concatenating adding an additional $1$-edge at the start; that is, concatenate it with the perfect realization~$[0,1]$.
For odd~$b$ in the range~$1 < b \leq x$, the last edge of the  standard realization~$\mathbf{h_3}$ for~$\{1^x, x^b\}$ is a~$1$-edge.

This leaves the case with even~$b$ and~$b=x$.  A construction very similar to~$\mathbf{h_3}$ works, with the difference that the tail curl moves through the penultimate coset at the lowest value rather than the highest:
$$\mathbf{h'_3} =  \Psi_0 \ \uplus \left( \apph{}{k=x-1}{3} \Psi_k \right) \uplus \Psi_2^{(0)} \uplus \Psi_1 \uplus \Psi_2^{(1)}. $$ 
(In fact, $\mathbf{h'_3}$ gives an alternative successful construction for all even~$b$ in the range $1 < b < x$.)  

\begin{figure}[tp]
\caption{The standard realization~$\mathbf{h'_3}$ for $\{ 1^8, 8^8 \}$.}\label{fig:h3dash}
\begin{center} \begin{tikzpicture}[scale=0.9, every node/.style={transform shape}]

\fill (0,1) circle (2pt) ;
\fill (0,2) circle (2pt) ;

\fill (1,1) circle (2pt) ;
\fill (1,2) circle (2pt) ;

\fill (2,1) circle (2pt) ;
\fill (2,2) circle (2pt) ;

\fill (3,1) circle (2pt) ;
\fill (3,2) circle (2pt) ;

\fill (4,1) circle (2pt) ;
\fill (4,2) circle (2pt) ;

\fill (5,1) circle (2pt) ;
\fill (5,2) circle (2pt) ;

\fill (6,1) circle (2pt) ;
\fill (6,2) circle (2pt) ;

\fill (7,0) circle (2pt) ;
\fill (7,1) circle (2pt) ;
\fill (7,2) circle (2pt) ; 

\draw (1,2) -- (0,2) -- (0,1) -- (1,1) -- (2,1) -- (2,2) -- (3,2) -- (3,1) -- (4,1) -- (4,2) -- (5,2) -- (5,1) -- (6,1) -- (6,2) -- (7,2) -- (7,0) ;

\node at (-0.3, 1) {\tiny 1} ;  
\node at (-0.3, 2) {\tiny 9} ;  
\node at (1, 2.3) {\tiny 10};
\node at (7.3, 0) {\tiny 0} ;    
\node at (7.3, 1) {\tiny 8} ;  
\node at (7.3, 2) {\tiny 16} ;  

\end{tikzpicture}\end{center}
\end{figure}
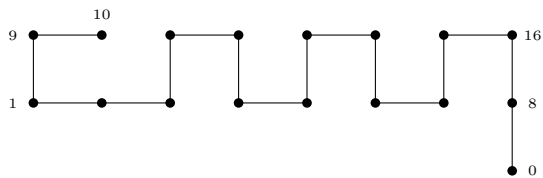

The realization~$\mathbf{h'_3}$ is illustrated for $\{1^8, 8^8\}$ in Figure~\ref{fig:h3dash}.
\end{proof}

\begin{thm}\label{th:xeven}{\rm (Even~$x$)} 
Let~$x$ be even, let~$y \geq 2x+1$ and let~$L = \{1^a, x^b, y^c \}$.  If~$a \geq 3x-2$ and $a+b \geq x + y - 1 $, then~$L$ has a linear realization.
\end{thm}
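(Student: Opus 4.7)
The plan is to follow the strategy of Theorem~\ref{th:x=2} adapted to general even $x$. Because $x$ is even, Theorem~\ref{th:omega} gives $\omega(x,\cdot) = x-1$, so every standard $\{1,x\}$-realization costs $x-1$ many $1$-edges. The budget $a \geq 3x-2$ thus supports three such chunks plus a single bridging $1$-edge: two chunks will appear as the components $L_1$ and $L_2$ inside a Lemma~\ref{lem:2rect} construction producing a ``main block'' on support $\{1,x,y\}$ (in which the vertices $0$ and $1$ are adjacent); a third chunk will supply the auxiliary multiset $M$ needed for Lemma~\ref{lem:hrjoin} (whose standard realization must end at vertex $1$); and the bridge is supplied by Lemma~\ref{lem:2rect} itself. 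Lemma~\ref{lem:hrjoin} then splices the pieces, and Lemma~\ref{lem:ones} absorbs any surplus $1$-edges beyond the minimum allowed by the hypotheses.

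Write $c = q''y + r''$ with $0 \leq r'' < y$. The primary case is $x+1 \leq r'' \leq y-x+1$ together with $b \geq y - 2x$. Here I would split $L \cup \{1\}$ as
\[ \{1^{2x-1},\, x^{b_L},\, y^c\} \;\cup\; \{1^{x-1},\, x^{b_M}\}, \qquad b_L + b_M = b, \]
realize the first summand by Lemma~\ref{lem:2rect} with $v_1 = r''-1$, $v_2 = y-r''+1$, and $L_i = \{1^{x-1},\, x^{v_i-x}\}$, and realize the second summand as a standard $\{1,x\}$-realization ending at vertex $1$ (obtainable from a suitable $\omega$-construction when parameters are chosen appropriately). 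The congruence $c \equiv v_1 + 1 \pmod{y}$ is automatic, the identity $v_1 + v_2 = y$ holds by construction, and $v_i \geq x$ follows from the range of $r''$; so Theorem~\ref{th:omega} supplies the standard realizations of $L_1$ and $L_2$, and Lemma~\ref{lem:hrjoin} yields the desired linear realization of $L$.

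The main obstacle is the boundary behaviour. The residues $r'' \in \{0,1\}$ require the one-sided variant used at the end of the proof of Theorem~\ref{th:x=2}: traverse $\Psi_0$ first, cross a $1$-edge joining the vertices $0$ and $1$, then traverse the remaining $y-1$ fauxsets in the order dictated by a standard $\{1,x\}$-realization, and finish with Lemma~\ref{lem:hrjoin}. When $r'' \in \{2, \ldots, x\} \cup \{y-x+2, \ldots, y-1\}$ the same split still works after one of $L_1, L_2$ is shrunk (allowing $a_i < x-1$, perhaps to the trivial realization with $b_i = 0$) and its freed $1$-edges are redistributed to the other component or to the auxiliary $M$. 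When $b < y - 2x$, the hypothesis $a + b \geq x + y - 1$ forces $a > 2x - 1$; I would then take $b_M = 0$, push all $x$-edges into the main block, and absorb the surplus $1$-edges inside $L_1$ or $L_2$. The parity incompatibilities analogous to those encountered in Lemma~\ref{lem:omega_eo}, which can arise when $y$ is odd and the two sides of the split carry inconvenient parities, are resolved by tail-curl variants of the $\omega$-construction at a cost of one extra $1$-edge each---always affordable within the $3x-2$ budget. The most delicate step is arranging, in every subcase, that the auxiliary realization $M$ truly ends at vertex $1$; this is the principal bookkeeping obstacle.
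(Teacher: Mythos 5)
Your even-parity case matches the paper's construction: the paper also splits $L\cup\{1\}$ into a main block built by Lemma~\ref{lem:2rect} (with $v_1=r''-1$, $v_2=y-r''+1$, intermediate $\{1,x\}$-realizations supplied by Theorem~\ref{th:omega}, and the one-sided variant for $r''\in\{0,1\}$) and an auxiliary multiset $M=\{1^{x-1},x^{b_1}\}$ inserted between $0$ and $1$ via Lemma~\ref{lem:hrjoin}. The genuine gap is in how you dispose of the case you call a ``parity incompatibility.'' The sizes in this scheme are rigid: $|M|=a+b-y+2$ is forced, so with $a_M=x-1$ the parity of $b_M$ is forced, and it comes out \emph{even} exactly when $a+b+y$ is odd. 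Your proposed fix --- a tail-curl variant of the $\omega$-construction ``at a cost of one extra $1$-edge'' --- cannot work: a Hamiltonian path with all edge lengths in $\{1,x\}$ from vertex $0$ to vertex $1$ has signed displacements summing to $1$, and since $x$ is even the $x$-edges contribute an even amount, so the number of $1$-edges in $M$ must be odd; $a_M=x$ is therefore impossible outright. Worse, with $a_M=x-1$ the fauxset walk (length $x-1$, visiting all $x$ residues, from $0$ to $1$) is forced to be $0,x-1,x-2,\ldots,1$, so every $1$-edge has displacement $-1$ and hence $S_x=1$, forcing $b_M$ odd; and adding \emph{two} extra $1$-edges does not change the parity of $b_M$. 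One can check concretely that, e.g., $\{1^7,6^2\}$ has no realization with end-vertices $0$ and $1$, so no amount of local tinkering with $M$ rescues the odd case. Your fallback ``take $b_M=0$'' also fails, since $\{1^{a_M}\}$ with $a_M>1$ is forced to be the monotone path and cannot end at vertex $1$.

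The paper's resolution of the odd case is structurally different and is the reason both for the extra ``$-2$'' slack in the hypothesis $a\ge 3x-2$ (versus $3(x-1)$ in the even case) and for the separate Lemma~\ref{lem:finaledge}. Instead of modifying $M$, one first builds (by the even-case machinery) a realization of $\{1^{a-1},x^b,y^{c+1}\}$ or of $\{1^a,x^{b-1},y^{c+1}\}$ --- whose ``$a+b$'' has the opposite parity, so $b_M$ is odd again --- and then trades the extra $y$-edge back for a $1$-edge or an $x$-edge by a tail curl at the end of the path. Making that trade possible requires choosing one of the intermediate $\{1,x\}$-realizations inside the Lemma~\ref{lem:2rect} block to terminate in an edge of prescribed length ($x$ when $a<y-x-2$, $1$ when $y-x-2\le a<x+y-2$), which is exactly what Lemma~\ref{lem:finaledge} provides. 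This mechanism is absent from your proposal, and without it the case $a+b+y$ odd is not covered.
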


\begin{proof}
If~$a \geq x+y-2$ then the result follows from Lemma~\ref{lem:omega_eo}, so assume that~$a < x+y-2$. Also, if~$x=2$ then the result follows from Theorem~\ref{th:x=2}, so assume that~$x>2$.  Let~$c = q''y + r''$ with $0 \leq r'' < y$.  

As in the proof of Theorem~\ref{th:x=2}, the method is to use Lemma~\ref{lem:2rect}.  Here, we need consider two main cases depending on the parity of~$a+b+y$. 

First, assume that~$a+b+y$ is even, so $a+b$ and~$y$ have the same parity.     We write
$$\{ 1^a, x^b, y^c \}  = \left(  \{ 1^{x-1}, x^{b_1} \} \cup \{ 1^{a-x+2}, x^{b_2}, y^c \}   \right)  \setminus \{ 1 \}$$
where~$b_1$ is chosen to be odd and such that we have~$(a+x-2) + b_2 = y-1$.     This is possible as 
$(a+x-2) + b_2 = a + b + 1 - (x + b_1 - 1)$,
which has the opposite parity to~$a+b$ and hence the same parity as~$y-1$.

The multiset~$\{1^{x-1}, x^{b_1} \}$ has a linear realization by Theorem~\ref{th:omega} and as~$b_1$ is odd the realization constructed in that proof has end-points~0 and~1.  

Suppose~$r'' > 1$.  As in the proof of Theorem~\ref{th:x=2}, the method is to use intermediate standard realizations for $\{1^{a_1} , x^{r''-2-a_1} \}$ and for $\{1^{a_2}, x^{y-r'' -a_2} \}$, where in this more general situation we have~$a_1 + a_2 = a - x + 1$, and then apply Lemma~\ref{lem:2rect}.   As~$a \geq  3(x - 1)$ we have~$a-x+1 \geq 2(x-1)$ and so we can construct these in general using Theorem~\ref{th:omega} and Lemma~\ref{lem:ones}.  Note that if either $r'' -1 \leq x$ or~$y-r''+1 \leq x$ then one of the standard realizations must realize a multiset containing only 1-edges (and we often again get a looser condition on~$a$ in these situations). 

If $r'' \in \{0,1\}$ then the ``one-sided" version of Lemma~\ref{lem:2rect} works in the same way as it did for this case in the proof of Theorem~\ref{th:x=2}.

Now consider the case where~$a+b+y$ is odd, so $a+b$ and~$y$ have opposite parities.   The above method does not work as-is because it would require a linear realization with end-points~0 and~1 for~$\{1^{x-1}, x^{b_1} \}$ for some even~$b_1$.  Such realizations do not exist in general.  The solution is to construct a realization for either~$\{1^{a-1}, x^b, y^{c+1} \}$ or $\{1^a, x^{b-1}, y^{c+1} \}$ (which of these we choose depends on the exact parameters involved) in such a way that we may adjust it to become a linear realization for~$L$.  

Given the realization using~$c+1$ edges of length~$y$, we need to find a Hamiltonian path that uses one additional 1-edge or~$x$-edge and one fewer $y$-edge.  To gain a 1-edge we require one of the intermediate realizations with support~$\{1,x\}$ to have a~1-edge as the final edge; to gain an $x$-edge we require one of these intermediate realizations an~$x$-edge as the final edge.  We may then  use the tail curl trick to make the switch.  Figure~\ref{fig:1417} illustrates the process with~$U = \{1,4,17\}$ and~$c=45$.  The intermediate realization used on the left side of the picture realizes~$\{1^5\}$ and has a~1-edge as the final edge; the intermediate realization used on the right side of the picture realizes~$\{1^4, 4^6 \}$ and has a~4-edge as the final edge.

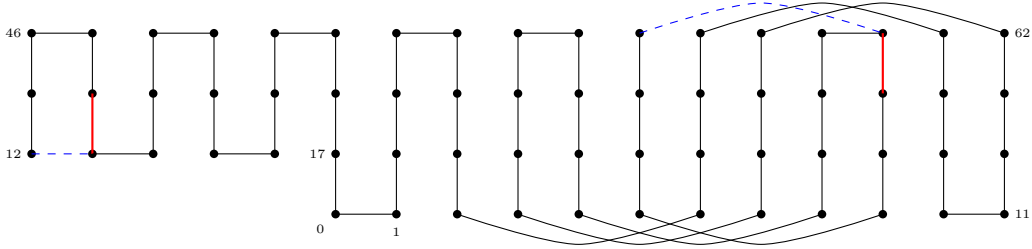
\begin{figure}[tp]
\caption{The solid lines are a linear realization for $\{1^{10}, 4^6, 17^{46}\}$.  Replacing the bolded red solid line on the left of the picture with the dashed blue line incident with it gives a linear realization for  $\{1^{11}, 4^6, 17^{45}\}$.  If we instead  replace the bolded red solid line on the right of the picture with the dashed blue line incident with it we obtain a linear realization for  $\{1^{10}, 4^7, 17^{45}\}$.   }\label{fig:1417}

\begin{center}
\begin{tikzpicture}[scale=0.8, every node/.style={transform shape}]

\fill (1,2) circle (2pt) ;
\fill (1,3) circle (2pt) ;
\fill (1,4) circle (2pt) ;

\fill (2,2) circle (2pt) ;
\fill (2,3) circle (2pt) ;
\fill (2,4) circle (2pt) ;

\fill (3,2) circle (2pt) ;
\fill (3,3) circle (2pt) ;
\fill (3,4) circle (2pt) ;

\fill (4,2) circle (2pt) ;
\fill (4,3) circle (2pt) ;
\fill (4,4) circle (2pt) ;

\fill (5,2) circle (2pt) ;
\fill (5,3) circle (2pt) ;
\fill (5,4) circle (2pt) ;

\fill (6,1) circle (2pt) ;
\fill (6,2) circle (2pt) ;
\fill (6,3) circle (2pt) ;
\fill (6,4) circle (2pt) ;

\fill (7,1) circle (2pt) ;
\fill (7,2) circle (2pt) ;
\fill (7,3) circle (2pt) ;
\fill (7,4) circle (2pt) ;

\fill (8,1) circle (2pt) ;
\fill (8,2) circle (2pt) ;
\fill (8,3) circle (2pt) ;
\fill (8,4) circle (2pt) ;

\fill (9,1) circle (2pt) ;
\fill (9,2) circle (2pt) ;
\fill (9,3) circle (2pt) ;
\fill (9,4) circle (2pt) ;

\fill (10,1) circle (2pt) ;
\fill (10,2) circle (2pt) ;
\fill (10,3) circle (2pt) ;
\fill (10,4) circle (2pt) ;

\fill (11,1) circle (2pt) ;
\fill (11,2) circle (2pt) ;
\fill (11,3) circle (2pt) ;
\fill (11,4) circle (2pt) ;

\fill (12,1) circle (2pt) ;
\fill (12,2) circle (2pt) ;
\fill (12,3) circle (2pt) ;
\fill (12,4) circle (2pt) ;

\fill (13,1) circle (2pt) ;
\fill (13,2) circle (2pt) ;
\fill (13,3) circle (2pt) ;
\fill (13,4) circle (2pt) ;

\fill (14,1) circle (2pt) ;
\fill (14,2) circle (2pt) ;
\fill (14,3) circle (2pt) ;
\fill (14,4) circle (2pt) ;

\fill (15,1) circle (2pt) ;
\fill (15,2) circle (2pt) ;
\fill (15,3) circle (2pt) ;
\fill (15,4) circle (2pt) ;

\fill (16,1) circle (2pt) ;
\fill (16,2) circle (2pt) ;
\fill (16,3) circle (2pt) ;
\fill (16,4) circle (2pt) ;

\fill (17,1) circle (2pt) ;
\fill (17,2) circle (2pt) ;
\fill (17,3) circle (2pt) ;
\fill (17,4) circle (2pt) ;

\draw (1,2) -- (1,4) -- (2,4) -- (2,3) ;
\draw[thick, red] (2,3) -- (2,2)  ;
\draw (2,2) -- (3,2) -- (3,4) -- (4,4) -- (4,2) -- (5,2) -- (5,4) -- (6,4) -- (6,1) -- (7,1) -- (7,4) -- (8,4) --(8,1) ;

\draw (9,1) -- (9,4) -- (10,4) -- (10,1) ;
\draw (11,1) -- (11,4) ;
\draw (12,1) -- (12,4) ;
\draw (13,1) -- (13,4) ;
\draw (14,1) -- (14,4)  -- (15,4);
\draw[thick, red] (15,4) -- (15,3) ;
\draw (15,3) -- (15,1) ;
\draw (16,4) -- (16,1) -- (17,1) -- (17,4)  ;

\draw  plot [smooth] coordinates {(8,1) (10,0.5) (12,1)};
\draw  plot [smooth] coordinates {(9,1) (11,0.5) (13,1)};
\draw  plot [smooth] coordinates {(10,1) (12,0.5) (14,1)};
\draw  plot [smooth] coordinates {(11,1) (13,0.5) (15,1)};

\draw  plot [smooth] coordinates {(12,4) (14,4.5) (16,4)};
\draw  plot [smooth] coordinates {(13,4) (15,4.5) (17,4)};

\draw[dashed, blue] (1,2) -- (2,2) ;
\draw[dashed, blue]  plot [smooth] coordinates {(11,4) (13,4.5) (15,4)};

\node at (0.7, 2) {\tiny 12} ;  
\node at (0.7, 4) {\tiny 46} ;  
\node at (5.75, 0.75) {\tiny 0} ;  
\node at (5.7, 2) {\tiny 17} ;  
\node at (7, 0.7) {\tiny 1} ;  
\node at (17.3, 1) {\tiny 11} ;  
\node at (17.3, 4) {\tiny 62} ;  

\end{tikzpicture}
\end{center}
\end{figure}

It now remains to show that we can apply this process in all cases with~$a+b+y$ odd. 
We construct the required realizations of $\{1^{a-1}, x^b, y^{c+1} \}$ or $\{1^a, x^{b-1}, y^{c+1} \}$ using the method of the first part of the proof.   

First, consider the case~$a < y-x-2$.  Use the method to construct a realization for~$\{1^a, x^{b-1}, y^{c+1} \}$.  In doing so, we find that the value of~``$b_2$" is~$y-x-a$, which is greater than 2.  Therefore at least one of the intermediate standard realizations has at least two $x$-edges and can be chosen to end with an~$x$-edge by Lemma~\ref{lem:finaledge}.  Applying the tail curl trick gives a realization for~$L$.

Next, consider the alternative case:~$y-x-2 \leq a < x+y-2$.   Use the method to construct a realization for~$\{1^{a-1}, x^b, y^{c+1} \}$.  This time we find the value of~``$b_2$" to be~$y-x-a + 1$, which is at most~3 and so less than~$x$. (Note that where the argument requires $a \geq 3(x-1)$ we now need $a-1 \geq  3(x-1)$, giving the constraint $a \geq 3x-2$ in the statement of the theorem.)  Using Lemma~\ref{lem:finaledge}, we may choose one of the intermediate standard realizations to end with a~1-edge.   Applying the tail curl trick gives a realization for~$L$.
\end{proof}

\section{Implications for the BHR Conjecture}\label{sec:bhr}

As noted in the introduction, the primary reason for interest in linear realizations is their usefulness in resolving instances of the BHR Conjecture.  In this section we use some modular arithmetic to show that the results obtained for linear realizations in the literature and here can be used to give results for the BHR Conjecture that are much more wide-ranging than currently known.

As described in the introduction, we may use automorphisms of~$\Z_v$ to move between equivalent instances of the BHR Conjecture in~$K_v$.  In particular, given a multiset~$\{1^a, x^b, y^c\}$ with~$x$ and~$y$ coprime to~$v$ we get two further multisets for which the BHR Conjecture is equivalent: $\{ \widehat{x^{-1}}^a   , 1^b , \widehat{x^{-1}y}^c  \}$ and  $\{ \widehat{y^{-1}}^a  ,  \widehat{xy^{-1}}^b , 1^c  \}$.  If at least one of these multisets has a linear realization then each has a not-necessarily-linear realization.

In this section we focus on cases where we can show {\em all} multisets with a given support satisfy the BHR Conjecture, or come close to such results.  In a future paper we shall use similar techniques, and further new constructions, to give partial results for arbitrary supports of size~3~\cite{AO+}.

The general approach is to take bounds on~$a$,~$b$ and~$c$ that each imply the existence of a linear realization and show that if none of the bounds are met then $a+b+c < v-1$, a contradiction.

\begin{exa}\label{ex:v61}
Let~$v = 61$ and let~$L = \{ 1^a, 4^b, 13^c \}$.  If any of~$a$, $b$ or~$c$ is~$0$, then~$L$ is realizable by Theorem~\ref{th:known}.1, so assume $a,b,c > 0$.  The equivalent multisets to~$L$ are~$L' = \{ 15^a, 1^b, 12^c \}$ and~$L'' = \{14^a, 5^b, 1^c \}$.  We have that~$L$ has a linear realization when~$a \geq 15$ by Lemma~\ref{lem:omega_eo}, that~$L'$ has a linear realization when~$b \geq 22$ also by Lemma~\ref{lem:omega_eo}, and that~$L''$ has a linear realization when~$c \geq 18$ by Corollary~\ref{cor:omega_gen}.  Hence~$L$ has a realization if any one of these three conditions hold.  If none of them hold then $$a+b+c < 15 + 25 + 18 = 58 < 60 = v-1,$$ a contradiction.  Therefore, when~$v=61$ all multisets with support~$\{1,4,13\}$ satisfy the BHR Conjecture.
\end{exa}

To obtain general results we need to understand more about how a support is related to its equivalent ones.  For example, in~$\Z_{61}$ the support $\{ 1, 8,9\}$ is equivalent to~$\{1, 23, 24\}$ and~$\{1,27,28\}$.  Notice that each is amenable to the application of Theorem~\ref{th:1xx+1}.  This situation applies more generally for supports of the form~$\{1,x,x+1\}$, which is proved and used in Theorem~\ref{th:bhr1xx+1}.

\begin{thm}\label{th:bhr1xx+1}{\rm (BHR Implications~1)} 
Let~$x > 1$.  For all $v \geq 2x^2 + 13x + 11$ with $\gcd(v,x) = \gcd(v,x+1) = 1$, the BHR~Conjecture holds for multisets with support~$\{1,x,x+1\}$. 
\end{thm}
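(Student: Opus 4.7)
The plan is to combine Theorem~\ref{th:1xx+1} with automorphisms of~$\Z_v$, as illustrated in Example~\ref{ex:v61}, and then argue by counting.

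Let $L = \{1^a, x^b, (x+1)^c\}$ be an admissible multiset of size $v-1$. If any exponent is zero the support has size at most two and Theorem~\ref{th:known}.1 finishes the argument, so assume $a,b,c \geq 1$. Because $\gcd(v,x) = \gcd(v,x+1) = 1$, both $x$ and $x+1$ are units in~$\Z_v$, and so multiplying the vertex labels by $x^{-1}$ and by $(x+1)^{-1}$ yields multisets $L'$ and $L''$ whose realizability is equivalent to that of~$L$. The key structural observation is that both equivalent supports remain of the form $\{1, y, y+1\}$: multiplying by $x^{-1}$ produces the support $\{\widehat{x^{-1}}, 1, \widehat{x^{-1}+1}\}$, whose two non-$1$ members differ by $1$ and hence reduce to consecutive integers; multiplying by $(x+1)^{-1}$ produces $\{\widehat{(x+1)^{-1}}, \widehat{x(x+1)^{-1}}, 1\}$, and since $x(x+1)^{-1} \equiv 1-(x+1)^{-1} \pmod v$ those two non-$1$ elements are also consecutive after reduction.

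Applying Theorem~\ref{th:1xx+1} to each of $L$, $L'$, $L''$ (noting that the resulting standard linear realizations are automatically cyclic, since all support elements are at most $\lfloor v/2 \rfloor$) yields three sufficient conditions for the realizability of $L$: \textbf{(I)}~$a \geq x+1$; \textbf{(II)}~$b \geq y_1+1$, where $y_1+1$ is the maximum of $\supp(L')$; and \textbf{(III)}~$c \geq y_2+1$, where $y_2+1$ is the maximum of $\supp(L'')$. If none of (I)--(III) hold then $a+b+c \leq x + y_1 + y_2$, and combining with the admissibility identity $a+b+c = v-1$ gives $v - 1 \leq x + y_1 + y_2$. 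It therefore suffices to show that $v \geq 2x^2 + 13x + 11$ forces $y_1 + y_2 < v - 1 - x$, contradicting this inequality.

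The main obstacle is bounding $y_1+y_2$ tightly: the naive bounds $y_i + 1 \leq \lfloor v/2 \rfloor$ are much too weak. Instead I would exploit that one of $y_1, y_1+1$ equals $\widehat{x^{-1}}$ and one of $y_2, y_2+1$ equals $\widehat{(x+1)^{-1}}$. Writing $x(y_1+1) = k_1 v + \varepsilon_1$ with $|\varepsilon_1| \leq x+1$ (and analogously $(x+1)(y_2+1) = k_2 v + \varepsilon_2$), the constraint $y_i+1 \leq \lfloor v/2 \rfloor$ forces $k_1 \leq \lfloor x/2 \rfloor$ and $k_2 \leq \lfloor (x+1)/2 \rfloor$. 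A case analysis on the parities of $x$ and $x+1$ and on the signs of the $\varepsilon_i$ then yields bounds of the shape $y_i + 1 \leq \tfrac{v}{2} - \tfrac{v}{2 x_i} + O(x)$, where $x_1 = x$ and $x_2 = x+1$. Summing gives $y_1 + y_2 \leq v - \tfrac{v(3x+O(1))}{2x(x+1)} + O(x)$, and a direct computation shows that the hypothesis $v \geq 2x^2 + 13x + 11$ is precisely what is needed to bring this below $v-1-x$, yielding the required contradiction. The delicate part is tracking the exact constants through the parity cases so that the threshold comes out at $2x^2 + 13x + 11$ rather than something looser.
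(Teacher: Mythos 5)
Your proposal follows essentially the same route as the paper: reduce to the three equivalent supports of the form $\{1,y,y+1\}$, apply Theorem~\ref{th:1xx+1} to each of $L$, $L'$, $L''$, and derive a contradiction with $a+b+c = v-1$ by bounding the reduced elements using $x\,\widehat{x^{-1}} \equiv \pm 1 \pmod v$ and $\widehat{x^{-1}} < v/2$. The only difference is in the bounding step, where the paper uses the nontrivial bound only for the odd element of $\{x,x+1\}$ and the trivial bound $(v-3)/2$ for the even one --- already enough to reach the threshold $2x^2+13x+11$ --- so your (at least as strong) symmetric bounds certainly close the argument.
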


\begin{proof}
First consider the supports that are equivalent to~$\{1,x,x+1\}$.  To find the first we multiply through by~$x^{-1}$ and reduce if necessary, to give~$\{ \widehat{x^{-1}} , 1, \widehat{1+x^{-1}} \}$.  Letting~$y= \min(\widehat{x^{-1}} ,\widehat{1+x^{-1}})$, this becomes~$\{1, y, y+1\}$.  To find the second  we multiply through by~$(x+1)^{-1}$ and reduce if necessary, to give 
$$\{ \widehat{(x+1)^{-1}} , \widehat{x(x+1)^{-1}} , 1 \} = \{ \widehat{(x+1)^{-1}} , \widehat{(x+1)^{-1} - 1} , 1 \} .$$  
Letting~$z= \min(\widehat{(x+1)^{-1}} , \widehat{(x+1)^{-1} - 1} )$, this becomes~$\{1, z, z+1\}$.  Hence all three supports have the structure that allows the application of Theorem~\ref{th:1xx+1}.

Suppose~$x$ is odd.  Then~$x \cdot \widehat{x^{-1}} = vk \pm 1$ for some~$k$ and so using the fact that~$\widehat{x^{-1}} < v/2$, we must have $\widehat{x^{-1}} \leq ((x-1)v/2x) + 1$.  Also, $x+1$ is even and at least~4, so $\widehat{(x+1)^{-1}} \leq (v-3)/2$. 
By Theorem~\ref{th:1xx+1}, the multiset~$\{1^a, x^b, (x+1)^c \}$ has a realization when~$a \geq x+1$, when $b \geq y+1$ or when $c \geq z+1$.   Suppose none of these inequalities are met.  Then
\begin{eqnarray*}
a+b+c &<& x+1 + y+1 + z+1 \\
  & \leq & x +1 \ + \  ((x-1)v/2x) + 3 \ + \ (v-3)/2 + 2 \\ 
  & = & (2x+9)/2 \ + \ v(2x-1)/2x. \\
\end{eqnarray*}
We see from the coefficient of~$v$ that for a fixed~$x$ and sufficiently large~$v$ we shall reach the desired contradiction~$a+b+c < v-1$.   A little more algebra shows that this occurs when~$v \geq 2x^2  + 11x$.

We repeat the process for~$x$ even.  In this case we have $\widehat{(x+1)^{-1}} \leq xv/(2x+2) + 1$ and~$\widehat{x^{-1}} \leq (v-3)/2$ (we may assume~$x>2$ as the BHR Conjecture holds for~$\{1,2,3\}$~\cite{CD10}).  We obtain the inequality
\begin{eqnarray*}
a+b+c &<& x+1 + y+1 + z+1 \\
 & \leq & x + 1 \  + \  (v-3)/2 + 2  \ + \  xv/(2x+2) + 3 \\
 & = & (2x+9)/2 \ + \ v(2x+1)/(2x+2) .\\
\end{eqnarray*}
The coefficient on~$v$ again indicates that sufficiently large~$v$ will lead to a contradiction; in this case we obtain $v \geq 2x^2 + 13x + 11$.
The result follows.
\end{proof}

Note that Theorem~\ref{th:bhr1xx+1} implies that, for any given~$x$, Buratti's Conjecture (that is, the BHR Conjecture with prime~$v$) holds for $\{1,x,x+1\}$ for all sufficiently large~$v$.   

The methodology of this section often works for values of~$v$ that do not satisfy the bound of Theorem~\ref{th:bhr1xx+1}.  Using it we may immediately resolve Buratti's Conjecture for three new supports.

\begin{cor}\label{cor:7910}{\rm (Computer findings)}
Let~$L$ be a multiset of size~$v-1$ with support~$\{1,7,8\}$, $\{1,9,10\}$ or $\{1,10,11\}$ such that~$v$ is coprime to each element of the support.  Then~$L$ has a realization.  In particular, Buratti's Conjecture holds for multisets with these supports.
\end{cor}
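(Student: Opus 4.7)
The plan is to adapt the methodology underlying Theorem~\ref{th:bhr1xx+1} so that it succeeds for every admissible $v$ when $x \in \{7, 9, 10\}$, rather than only for $v \geq 2x^2 + 13x + 11$. Recall that for each such $x$ and each $v$ coprime to $x(x+1)$, the support $\{1, x, x+1\}$ is equivalent under automorphisms of $\Z_v$ to two further supports, which by the argument in the proof of Theorem~\ref{th:bhr1xx+1} both take the form $\{1, y, y+1\}$ and $\{1, z, z+1\}$, with $y$ and $z$ determined by $x^{-1}$ and $(x+1)^{-1}$ in $\Z_v$. Theorem~\ref{th:1xx+1} then guarantees a realization for $\{1^a, x^b, (x+1)^c\}$ whenever $a \geq x+1$, $b \geq y+1$, or $c \geq z+1$; if the bound $(x+1)+(y+1)+(z+1) \leq v-1$ holds, the admissibility requirement $a+b+c = v-1$ forces one of the three conditions, settling the case.

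First I would sharpen the inequality by substituting the specific value of $x$ before invoking the worst-case estimates on $\widehat{x^{-1}}$ and $\widehat{(x+1)^{-1}}$; this reduces the threshold above which $(x+1)+(y+1)+(z+1) \leq v-1$ holds automatically well below the generic bound of Theorem~\ref{th:bhr1xx+1}. For the remaining (finite) set of admissible $v$, I would compute $y$ and $z$ explicitly from $x^{-1}$ and $(x+1)^{-1}$ in $\Z_v$ and test the inequality directly. Any $v \leq 37$ is already handled by Theorem~\ref{th:known}.12, so only moderate values of $v$ need individual attention.

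For the finitely many pairs $(x, v)$ still uncovered by the three bounds, computer search over the small list of admissible multisets $\{1^a, x^b, (x+1)^c\}$ that simultaneously fail $a < x+1$, $b < y+1$, and $c < z+1$ produces explicit Hamiltonian path realizations; equivalently, one may instead exhibit a realization of one of the three equivalent multisets. The main obstacle will be controlling the computational load for $x = 10$, where the naive threshold $v \geq 341$ is largest and the sharpened inequality must cut the residual range down enough that exhaustive Hamiltonian path search in~$K_v$ remains tractable. Where raw search proves too slow, small hand-constructed grid-based realizations in the spirit of Sections~\ref{sec:1xx+1} and~\ref{sec:evenx} should fill the gap.
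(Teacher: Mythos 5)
Your proposal follows essentially the same route as the paper: invoke Theorem~\ref{th:bhr1xx+1} for $v \geq 2x^2+13x+11$, and for the remaining finite range of $v$ compute the two equivalent supports $\{1,y,y+1\}$ and $\{1,z,z+1\}$ explicitly and verify by computer that every admissible $\{1^a,x^b,(x+1)^c\}$ satisfies at least one of $a\geq x+1$, $b\geq y+1$, $c\geq z+1$, so that Theorem~\ref{th:1xx+1} applies. The fallback of ad hoc Hamiltonian path searches in your final step turns out to be unnecessary for $x\in\{7,9,10\}$ --- the point of singling out these three values (cf.\ Table~\ref{tab:badx}) is precisely that the Theorem~\ref{th:1xx+1} check leaves no residual cases.
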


\begin{proof}
Let $L = \{ 1^a, x^b, (x+1)^c\}$ for $x \in \{7,9,10\}$ and suppose~$v$ is coprime to~$x$ and~$x+1$.  If~$v \geq 2x^2 + 13x +11$ then the result follows from Theorem~\ref{th:bhr1xx+1}.  Otherwise, when we consider~$L$ and its two equivalent multisets modulo~$v$, we find via a computer search that at least one of them is realizable via Theorem~\ref{th:1xx+1}.
\end{proof}

For~$x \leq 15$, Table~\ref{tab:badx} gives the only values of~$v$ with $\gcd(v,x) = \gcd(v,x+1) = 1$ for which the truth of the BHR Conjecture for support~$\{1, x, x+1\}$ does {\em not} follow from one of Theorems~\ref{th:known}.2,~\ref{th:known}.12 or~\ref{th:bhr1xx+1} or from appplying Theorem~\ref{th:1xx+1} as in the proof of Corollary~\ref{cor:7910}.

\begin{table}
\caption{Values of~$v$ and~$x$ with~$\gcd(v,x) = \gcd(v,x+1) = 1$ and~$x \leq 15$ for which the BHR Conjecture remains open for support~$\{1, x, x+1\}$.}\label{tab:badx}

$$
\begin{array}{r||c|c|c|c|c|c}
x & 8 & 11 & 12 & 13 & 14 & 15  \\
\hline 
v & 43 & 41,53,79 & 41,67,89 & 67 & 73 & 41, 43, 73, 103, 137, 167  \\
\end{array}
$$

\end{table}

To conclude, we slightly relax the focus for the section on instances where we can resolve all sufficiently large multisets with a given support to give a general result for multisets with support~$\{1,2,x\}$.  As with Theorem~\ref{th:bhr1xx+1}, and understanding of equivalent supports is important.  Here, however, this guides us on what to avoid.  For any odd~$v > 2x$, an equivalent support to~$\{1,2,x\}$ is~$\{1, y, (v-1)/2\}$ for some~$y$.  That~$(v-1)/2$ is as large as possible makes the results we have less effective, so we use a method that requires consideration of~$\{1,2,x\}$ and the other equivalent support only.

\begin{thm}\label{th:bhr12x}{\rm (BHR Implications~2)}
Let~$L = \{1^a, 2^b, x^c\}$.  If $v > 4x$ with $\gcd(v,x) = 1$ then the BHR Conjecture holds for~$L$, except possibly when~$x$ is odd and~$a \in \{1,2\}$.
\end{thm}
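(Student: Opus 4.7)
The plan is to adapt the two-multiset accounting argument used in Theorem~\ref{th:bhr1xx+1}: work with $L$ together with its equivalent multiset under the automorphism of $\mathbb{Z}_v$ given by multiplication by $x^{-1}$, and show that for $v > 4x$ at least one of the two always falls within reach of a linear-realization construction.

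For the preliminaries, since $\gcd(v, x) = 1$, admissibility is automatic. Any non-trivial divisor $d$ of $v$ is coprime to $x$, so can only divide the element $2$ of the support, and only when $v$ is even (in which case $x$ is odd by coprimality), giving a count of at most $b \leq v - 2$. So it suffices to prove realizability. Supports of size at most two and $x \leq 7$ are covered by Theorem~\ref{th:known}, so I would assume $a, b, c \geq 1$ and $x \geq 8$.

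Let $p = \widehat{x^{-1}}$ and $q = \widehat{2x^{-1}}$, so that $L$ is realizable if and only if $L' = \{p^a, q^b, 1^c\}$ is. A short case split on whether $2p \leq (v-1)/2$ (where $q = 2p$ and $p + q = 3p \leq 3(v-1)/4$) or $2p > (v-1)/2$ (where $q = v - 2p$ and $p + q = v - p \leq (3v - 1)/4$) proves the key estimate $p + q \leq (3v - 1)/4$. Two realizability criteria are then in play. \emph{Criterion (A):} for $x$ even, Theorem~\ref{th:known}.4 realizes $L$ whenever $a + b \geq x - 1$; for $x$ odd with $a \geq 3$, Theorem~\ref{th:x=2} realizes $L$ whenever $a + b \geq x - 1$. \emph{Criterion (B):} Corollary~\ref{cor:omega_gen} applied to $L'$ realizes $L'$ whenever $c \geq p + q$. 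Now suppose (A) fails: in both the $x$-even case and the $x$-odd, $a \geq 3$ case, this forces $a + b \leq x - 2$, hence $c = v - 1 - a - b \geq v - x + 1$; the hypothesis $v > 4x$ gives $v - x + 1 > (3v - 1)/4 \geq p + q$, so criterion (B) is met and $L'$, equivalently $L$, is realized.

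The only remaining situation is $x$ odd with $a \in \{1, 2\}$, where criterion (A) is unavailable for any size of $a + b$ and only (B) remains; since $c$ can be arbitrarily small as $b$ grows, this case is not covered by the two-multiset method, accounting precisely for the stated exception. The main subtlety in the rest of the argument is pinning down $p + q \leq (3v - 1)/4$ together with the check that the coefficient $4$ in $v > 4x$ is tight enough to force $v - x + 1 > p + q$; once that estimate is in hand, the dispatch into criteria (A) and (B) is routine.
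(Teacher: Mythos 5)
Your proposal is correct and follows essentially the same route as the paper: both use the two sufficient conditions $a+b\geq x-1$ (via Theorem~\ref{th:known}.4 and Theorem~\ref{th:x=2}) and $c\geq \widehat{x^{-1}}+\widehat{2x^{-1}}$ (via Corollary~\ref{cor:omega_gen} applied to the equivalent multiset), then verify arithmetically that $v>4x$ forces one of them to hold. The only difference is bookkeeping: the paper splits into four cases on the location of $x^{-1}$ to show $x+\widehat{x^{-1}}+\widehat{2x^{-1}}<v$ directly, whereas you prove the uniform bound $\widehat{x^{-1}}+\widehat{2x^{-1}}\leq(3v-1)/4$ in two cases, which is a slightly cleaner packaging of the same estimate.
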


\begin{proof}
Assume that if~$x$ is odd then~$a \geq 3$.
By Theorems~\ref{th:known}.4 and~\ref{th:x=2}, the multiset~$L$ has a realization when~$a+b \geq x-1$, which is equivalent to~$c \leq v-x$.  Considering the equivalent multiset~$\{ \widehat{x^{-1}}^a, \widehat{2x^{-1}}^b, 1^c \}$, Corollary~\ref{cor:omega_gen} says that~$L$ has a realization when~$c \geq  \widehat{x^{-1}} +  \widehat{2x^{-1}}$.  Therefore,~$L$ has a realization whenever
$x + \widehat{x^{-1}} +  \widehat{2x^{-1}} \leq v$.

We divide the remainder of the proof into four cases, depending on the value of~$x^{-1}$.  First, suppose~$x^{-1}  < v/4$.  Then~$\widehat{2x^{-1}} = 2x^{-1} < v/2$ and 
$$ x + \widehat{x^{-1}} +  \widehat{2x^{-1}} = x + 3x^{-1} < v$$
as $x < v/4$.  

Second, suppose~$v/4 < x^{-1}  < v/2$.  Then~$v/2 < 2x^{-1} < v$, so~$\widehat{2x^{-1}} = v - 2x^{-1}$.  Hence
$$ x + \widehat{x^{-1}} +  \widehat{2x^{-1}} = v + x - x^{-1}  < v$$
as~$x < v/4 < x^{-1}$. 

Third, suppose~$v/2 < x^{-1}  < 3v/4$.  Then~$v < 2x^{-1} < 3v/2$, so $\widehat{2x^{-1}}= 2x^{-1} - v$ and $v/4 < \widehat{2x^{-1}} < v/2$.  Hence
$$ x + \widehat{x^{-1}} +  \widehat{2x^{-1}} =  x + x^{-1}  < v$$
as~$x < v/4$ and $x^{-1} < 3v/4$. 

Fourth, and finally, suppose $3v/4 < x^{-1}$.  Then $3v/2 < 2x^{-1}$ and~$\widehat{2x^{-1}}= 2v - 2x^{-1}$.  Hence
$$ x + \widehat{x^{-1}} +  \widehat{2x^{-1}} =  x + 3(v-x^{-1})  < v$$
as~$x < v/4$ and $x^{-1} > 3v/4$. 
\end{proof}

Theorems~\ref{th:bhr1xx+1} and~\ref{th:bhr12x} together imply Theorem~\ref{th:bhr}. Theorem~\ref{th:bhr1xx+1} implies the claim from the abstract and introduction that there are infinitely many sets~$U$ of size~3 for which there are infinitely many values of~$v$ where the BHR Conjecture holds for each multiset with support~$U$.

\end{document}